\newcommand{\R}{\mathbb{R}}
\newcommand{\N}{\mathbb{N}}
\newcommand{\Z}{\mathbb{Z}}
\newcommand{\C}{\mathbb{C}}
\newcommand{\hH}{\widehat{H}}
\newcommand{\T}{\mathbb{T}}
\newcommand{\D}{\mathcal{D}}
\newcommand{\G}{\widehat{G}}
\newcommand{\fT}{\mathcal{T}}
\newcommand{\om}{\omega}
\newcommand{\sg}{\sigma}
\newcommand{\Om}{\Omega}
\newcommand{\TOm}{\Theta}
\newcommand{\Del}{\Delta}
\newcommand{\la}{\langle}
\newcommand{\ra}{\rangle}
\newcommand{\supp}{{\rm supp}}
\newcommand{\spn}{{\rm span}}
\newcommand{\cross}{\times}
\newcommand{\bo}{\Box}
\newtheorem{thm}{Theorem}[section]
\newtheorem{cor}[thm]{Corollary}
\newtheorem{lem}[thm]{Lemma}
\newtheorem{prop}[thm]{Proposition}
\theoremstyle{definition}
\newtheorem{defn}[thm]{Definition}
\theoremstyle{remark}
\newtheorem{exm}[thm]{Example}
\newtheorem{rem}[thm]{Remark}
\numberwithin{equation}{section}
\newcommand{\Real}{\mathbb R}
\newcommand{\Realzero}{\mathbb{R}^*}
\newcommand{\norm}[1]{\left\Vert#1\right\Vert}
\newcommand{\abs}[1]{\left\vert#1\right\vert}
\newcommand{\A}{\mathcal{A}}
\newcommand{\B}{\mathcal{B}}
\newcommand{\F}{\mathcal{F}}
\newcommand{\Hi}{\mathcal{H}}
\newcommand{\I}{\mathcal{I}}
\newcommand{\M}{\mathcal{M}}
\newcommand{\n}{\mathcal{N}}
\newcommand{\s}{\mathcal{S}}
\newcommand{\prt}{\widehat{\otimes}}
\begin{document}

\title[Beurling-Fourier Algebras]
{Beurling-Fourier algebras, operator amenability and Arens regularity}

\author{Hun Hee Lee}
\address{Department of Mathematics, Chungbuk National University, 410 Sungbong-Ro, Heungduk-Gu, Cheongju 361-763, Korea}
\email{hhlee@chungbuk.ac.kr}

\author{Ebrahim Samei}
\address{Department of Mathematics and Statistics, University of Saskatchewan, Saskatoon, Saskatchewan, S7N 5E6, Canada}
\email{samei@math.usask.ca}

%\date{May 19, 2008.  Revised on August 20, 2008}

\subjclass{Primary 43A30, 46J10; Secondary 22E15, 43A75.}

\keywords{locally compact groups, Beurling algebras, Beurling-Fourier algebras,
operator amenability, operator weak amenability, Arens regularity,
2$\times$ 2 special unitary group, Heisenberg groups}

\begin{abstract}
We introduce the class of Beurling-Fourier algebras on locally compact groups
and show that they are non-commutative analogs of
classical Beurling algebras. We obtain various results with
regard to the operator amenability, operator weak amenability and Arens regularity of
Beurling-Fourier algebras on compact groups and
show that they behave very similarly to the classical Beurling algebras of discrete
groups. We then
apply our results to study explicitly the Beurling-Fourier algebras on $SU(2)$, the 2 $\times$ 2 unitary
group. We demonstrate that how Beurling-Fourier algebras are
closely connected to the amenability of the Fourier algebra of
$SU(2)$. Another major consequence of our results is that
our investigation allows us to construct families of unital infinite-dimensional closed
Arens regular subalgebras of the Fourier algebra of certain products of $SU(2)$.
\end{abstract}

\maketitle

Beurling algebras play an important role in different areas of harmonic
analysis. These are $L^1$-algebras associated to locally compact groups
when we put extra ``weight" on the groups (see Section \ref{S:Beurling alg}). The basic properties of these algebras
are well-known since the works of Beurling \cite{Beu1}, \cite{Beu2}, and Domar
\cite{Dom}, for abelian groups, and Reiter \cite{RS} for the general case
(see also \cite{DL}, \cite{FGLLM}, \cite{G1}, \cite{G2}, \cite{HKK}, and \cite{Sa}). For example,
it is shown in \cite{Dom} that the Beurling algebra $L^1(G,\om)$ is $*$-regular
for $G$ abelian if and only if the weight $\om$ is symmetric and non-quasianalytic. Also various
aspects of cohomologies and Arens regularities of Beurling algebras have been studied by
several authors, most notably Gr{\o}nb{\ae}k \cite{G1}, \cite{G2}, and Dales and Lau \cite{DL}. It is shown
that $L^1(G,\om)$ is amenable as a Banach algebra if and only if $G$
is amenable as a locally compact group and $\{\om(x)\om(x^{-1}) : x\in G\}$
is bounded \cite{G2}. This demonstrates that in most cases, the amenability of Beurling algebras
forces the weight to be trivial. On the other hand, even though the group algebra
$L^1(G)$ is not Arens regular when $G$ is infinite, for a large classes of weights,
it can happen that $\ell^1(G,\om)$ will be Arens regular \cite{DL}.

The aim of the present paper is to develop the corresponding ``dual theory"
for the classical Beurling algebras. That is, we consider the Fourier
algebra $A(G)$ of a locally compact group $G$, and the question of how
could we interpret Beurling algebras in this context and what would be their
basic properties? In the
language of Kac algebras \cite{ES} (or more generally locally
compact quantum groups - see \cite{KV}), $A(G)$ is interpreted as
the dual object of $L^1(G)$ in the sense of generalized Pontryagin
duality.  In particular, when $G$ is abelian, with dual group
$\widehat{G}$, then $A(G) \cong L^1(\widehat{G})$ via the Fourier
transform.  Thus for an abelian group $G$ and a weight $\om$ on $\G$, we
define the Beurling-Fourier algebra $A(G,\om)$ to be the Fourier transform of
Beurling algebra $L^1(\widehat{G}, \om)$ \cite[Section 6.3]{RS}.
In the general non-abelian setting though, $\G$ is not a group and so the extension
of this idea is more delicate!

In order to achieve our goal, we need to focus on the somewhat non-standard
interpretation of the weight $\om$. Consider the co-multiplication
	$$\Gamma : L^\infty(G) \rightarrow L^\infty(G\times G), \; f\mapsto \Gamma f,$$
where
$$\Gamma f(s,t) = f(st).$$
This $\Gamma$ can be easily extended to unbounded Borel measurable functions on $G$
using the same formula.
Now let $\om : G \rightarrow (0,\infty)$ be a continuous function.
Then the submutiplicativity of $\om$ (i.e. $\om$ being a weight) is clearly equivalent to the condition
	\begin{equation}\label{cond-intro}
	\Gamma(\om)(\om^{-1} \otimes \om^{-1}) \le 1.
	\end{equation}

Now let $VN(G)$ be the group von Neumann algebra of $G$,
and let $\Gamma$ be the usual co-multiplication on $VN(G)$ defined by
\begin{equation}\label{cond-intro-2}
\Gamma : VN(G) \rightarrow VN(G\times G), \; \lambda(s) \mapsto \lambda(s) \otimes \lambda(s),
\end{equation}
where $\lambda$ is the left regular representation of $G$.
In Section \ref{S:General cons}, we consider a dual version of weight functions satisfying a dual version of \eqref{cond-intro},
which requires an extension of the $*$-isomorphism $\Gamma$ in (\ref{cond-intro-2}) for certain unbounded operators.
For a fixed representation $VN(G)\subset B(H)$, we define a {\it weight on the dual of $G$} to be a ``suitable" densely defined (possibly unbounded)
operator $W$ acting on $H$ which is affiliated to $VN(G)$ (Definition \ref{Def-weight}).
To simplify our computation, we make a further assumption that $W$ has
a bounded inverse $W^{-1} \in VN(G)$. One major condition that $W$ has to satisfy is the
corresponding dual version of \eqref{cond-intro}:
$$\Gamma(W)(W^{-1}\otimes W^{-1}) \le 1_{VN(G\times G)}.$$
We show that this is the natural extension of a weight on duals of non-abelian
groups. Furthermore, (see Definition \ref{Def-BF-alg}), we define
			$$VN(G,W^{-1}) := \{AW : A \in VN(G)\}$$
		and equip $VN(G,W^{-1})$ with an operator space structure induced by the natural linear
isomorphism
			$$\Phi : VN(G) \rightarrow VN(G,W^{-1}),\; A \mapsto AW.$$
		We will denote the predual of $VN(G,W^{-1})$ by $A(G,W)$ and show that
it is a completely contractive Banach algebra. We call $A(G,W)$ the {\bf Beurling-Fourier
algebras} on $G$.

In the reminder of Section \ref{S:Beurling-Fourier alg}, we show that our approach
allows us to construct various classes of weights on duals of not necessary
abelian groups, namely compact groups and Heisenberg groups.
In Sections \ref{S:cen weight-compact group},
we compute certain {\it central} weights on duals of compact groups. By central weights,
we mean those weights that roughly speaking commute with elements of $VN(G)$
(Definition \ref{Def-weight}).
We show that these central weight on $\G$, the dual of a compact group $G$,
are of the form
\begin{align}\label{Eq:weight-central-compact-intro}
W= \bigoplus_{\pi\in \G}\om(\pi)1_{M_{d_\pi}},
\end{align}
where $\om : \widehat{G} \rightarrow (\delta,\infty)$, for some $\delta >0$, is a function satisfying
\eqref{Eq:weight-central}. In this case, we write $A(G,\om)$ instead of
$A(G,W)$. When $G$ is abelian, the relation \eqref{Eq:weight-central}
is exactly the submultiplicity of $\om$. However we also
construct central weights on duals of non-abelian compact groups using
\eqref{Eq:weight-central}%which are of interest for $G$ non-abelian
(see Example \ref{E:weight-compact}). One family of weights which are of particular interest
to us is ($a \geq 0$),
\begin{align}\label{Eq:weight-non-abelian-dim-intro}
\om_a(\pi)=d_\pi^a \ \ \ (\pi \in \G).
\end{align}
We also characterize certain
forms of central weights on duals of Heisenberg groups in terms of
weights on their center (Section \ref{S:cen weight-Heisenberg group} and
Definition \ref{D:weight-Heisenberg-central}).

Section \ref{S:BF alg-compact} is devoted to study operator
amenability, operator weak amenability, and Arens regularity
of the Beurling-Fourier algebra $A(G,\om)$ when $G$ is compact
and $W$ is the central weight (\ref{Eq:weight-central-compact-intro}).
In Section \ref{S:Operator amen-BF alg},
we first compute the operator amenability constant of $A(G,\om)$ for
$G$ finite and use it to characterize the operator amenability of $A(G,\om)$
when $G$ is an arbitrary product of finite groups and $\om$ is the corresponding
weight associated to this product. By applying this result to products of $S_3$,
the permutation group on $\{1,2,3\}$, we construct Beurling-Fourier
algebra with arbitrary operator amenability constant.
This is in contrast to the Fourier algebra of compact group
since the operator amenability constant is always 1 \cite{Ruan}.
We then change our focus and show that for a compact group $G$, $A(G,\om)$ fails to be operator amenable
if $\Om(\pi)=\om(\pi)\om(\overline{\pi}) \to \infty$ whenever $\pi \to \infty$ in the discrete topology.
This provides, for instance, central weights (such as the one defined in (\ref{Eq:weight-non-abelian-dim-intro})) on compact connected semisimple Lie groups whose Beurling-Fourier
algebras are not operator amenable. On some other direction, we show that
$A(G,\om)$ is always operator weakly amenable if $G$ is totally disconnected (Section \ref{S:BF alg-compact-OWA}).
Finally in Section \ref{S:BF alg-compact-Arens regularity}, we present various classes of central weights
whose Beurling-Fourier algebras are Arens regular or fails to be Arens regular.
For instance, we show that $A(G,\om_a)$ is Arens regular if $G$ is a
compact connected semi-simple Lie group and $\om_a$ is a weight satisfying
(\ref{Eq:weight-non-abelian-dim-intro}). All of these results go parallel to the analogous
results in  \cite{DL}, \cite{G1}, and \cite{G2} for classical Beurling algebras.

In Section \ref{S:weight-SU(2)}, we apply the results of the preceding section to study explicitly
Beurling-Fourier algebras on $SU(2)$. We present various classes weights
on $\widehat{SU(2)}$ and show the interesting fact that their Beurling-Fourier
algebras behave vary similarly to the corresponding Beurling algebras
on the $\Z=\widehat{\T}$, where we regard $\T$ as the maximal torus of $SU(2)$.
In Section \ref{S:connection amen A(SU(2))}, we explain in details the intriguing connection between
Beurling-Fourier algebras on $SU(2)$ and the fundamental
work of B. E. Johnson in \cite{J1} on non-amenability of the Fourier algebra $A(G)$ for
a compact connected non-abelian Lie group $G$. We should say that
this was one of the major motivations for us to do this project.

The final Section \ref{S:Arens regular-Fourier alg-SU(2)} is perhaps the most surprising to us
because there are no corresponding results in the classical Beurling algebras!
We construct unital infinite-dimensional closed subalgebras of the Fourier algebra of certain products of $SU(2)$
which are {\it Arens regular}.
We actually show that they are of the form $A(SU(2),\om_{2^n})$, where
$n\in \N$ and $\om_{2^n}$ is the weight defined in (\ref{Eq:weight-non-abelian-dim-intro}).
This is remarkable because this can not happen
%this phenomenon has not been observed
for the classical Beurling algebras!
There are unital infinite-dimensional Arens regular Beurling algebras but they can never be closed subalgebras
of some group algebra. These connections are certainly worthwhile further investigations.

In collaboration with M. Ghandehari, we have obtained further results concerning
Beurling-Fourier algebras of Heiesenberg groups $H_d=\C^d \times \R$ ($d\in \N)$
and $n\times n$ special unitary groups $SU(n)$ which will appear
in the subsequent article \cite{GLS}.
We would like to point out that J. Ludwig, N. Spronk, and L. Turowska in \cite{LST} have also considered
and studied the properties of Beurling-Fourier algebras on compact groups.
However they have mainly focued on the question of determining the spectrum of
Beurling-Fourier algebras. Their investigation is parallel to ours and provides a very good complement
to our paper.
 	
\section{Preliminaries}

\subsection{Fourier algebras}\label{S:Fourier alg}
Let $G$\/ be a locally compact group with a fixed left Haar measure.
We denote the group algebra of $G$ with $L^1(G)$.
Given a function $f$\/ on $G$\/ the left and right translation of $f$\/
by $x\in G$\/ is denoted by $(L_xf)(y)=f(xy)$ and
$(R_xf)(y)=f(yx)$, respectively. Let $P(G)$ be the set of all
continuous positive definite functions on $G$\/ and let $B(G)$ be its
linear span. The space $B(G)$ can be identified with the dual of
the group $C^*$-algebra $C^*(G)$, this latter being the
completion of $L^1(G)$ under its largest $C^*$-norm. With
the pointwise multiplication and the dual norm, $B(G)$ is a
commutative regular semisimple Banach algebra. The Fourier
algebra $A(G)$ is the closure of $B(G)\cap C_c(G)$ in $B(G)$. It
was shown in \cite{Em} that $A(G)$ is a commutative regular
semisimple Banach algebra whose carrier space is $G$. Also, if
$\lambda$ is the left regular representation of $G$\/ on $L^2(G)$
then, up to isomorphism, $A(G)$ is the unique predual of $VN(G)$,
the von Neumann algebra generated by the representation $\lambda$.

Let $\G$ be the collection of all equivalence classes of weakly continuous
irreducible unitary representations of $G$ into $B(H_\pi)$ for some Hilbert
space $H_\pi.$ $\G$ can be regarded as the {\it dual} of $G$.
If $G$ is abelain, the $\G$ is the set of continuous characters
from $G$ into $\T=\{ z\in \C \mid |z|=1 \}$ which forms a locally compact abelian group with compact-open
topology. The well-known Fourier transform gives us the identification $L^1(G)\cong A(\G)$
isometrically as Banach algebras.

If $G$ is a compact group, then for all $\pi\in \G$, $H_\pi$ is finite-dimensional.
We denote $d_\pi=\dim H_\pi$, $M_{d_\pi}$ to be the matrix representation of $B(H_\pi)$, and use the convention that $d_\pi$ is the dimension of $\pi$.
If $\pi\in\G$, we fix an orthonormal basis
$\{\xi_1^\pi,\dots,\xi_{d_\pi}^\pi\}$ for $H_\pi$ and define
\begin{equation}\label{eq:piij}
\pi_{ij}:G\to \C \ \ , \ \ \pi_{ij}(s)=\la \pi(s)\xi^\pi_j \mid \xi^\pi_i \ra
\end{equation}
for $i,j=1\dots d_\pi$.  We recall the well-known fact that
\begin{equation}\label{eq:trigfunc}
\fT(G)=\spn\{\pi_{ij}:\pi\in \G ,i,j=1,\dots,d_\pi\}
\end{equation}
is uniformly dense in $C(G)$, the space of continuous functions on $G$.
The Fourier transform on $L^1(G)$ is the one-to-one $*$-linear mapping $\F$ defined by
\begin{equation}\label{Eq:Fourier trans-compact}
\F : L^1(G) \to \bigoplus^\infty_{\pi \in \widehat{G}}M_{d_\pi} \ ,
\ f \mapsto (\widehat{f}(\pi))_{\pi\in \G},
\end{equation}
where $\widehat{f}(\pi)=\displaystyle \int_G f(t)\overline{\pi}(t) dt \in M_{d_\pi}$
and $\overline{\pi}$ is the conjugate representation of $\pi$.
Moreover,
\begin{equation}\label{Eq:Fourier trans-compact-trig poly}
\F(\fT(G))=\Big \{ \bigoplus_{\pi \in F} A_\pi : A_\pi \in M_{d_\pi},
 F\subset \G \ \text{is finite} \Big \}.
\end{equation}
Note that if $A=\bigoplus_{\pi \in F} A_\pi$ with $F\subset \G$ finite,
then
\begin{equation}\label{Eq:Inverse Fourier trans-compact-trig poly}
		f(x) = \F^{-1}(A)(x) = \sum_{\pi \in F} d_\sg \text{tr}(A_\pi \pi(x)),\ \  x\in G.
\end{equation}
Also if we regard $L^1(G)$ as convolution operators
on $L^2(G)$, then $L^1(G)$ is a subalgebra of $VN(G)$ and
$\F$ induces an $*$-isomorphism
\begin{equation}\label{Eq:Fourier trans-compact-VN(G)}
\F : VN(G) \cong \bigoplus^\infty_{\pi \in \widehat{G}}M_{d_\pi}.
\end{equation}
Note that the above direct sums over $\widehat{G}$ assume
the repetition of the same component $d_\pi$-times for $\pi \in \widehat{G}$.
It follows from the preceding identification that
$$A(G)=\{f\in C(G) : \|f\|_{A(G)}=\sum_{\pi \in \G} d_\pi \| \widehat{f}(\pi) \|_1< \infty \},$$
where $\|\cdot\|_1$ is the trace-class
norm on $M_{d_\pi}$.
See \cite[sections 27 and 34]{HR2} for complete
details.

\subsection{Beurling algebras}\label{S:Beurling alg}
Let $G$ be a locally compact group. A weight on $G$ is a continuous function
$\om : G \to (0,\infty )$ such that
$$ \om (st)\leq \om (s)\om (t) \ \ \ (s,t \in G).$$
Sometimes we allow a weight $\om$ just to be measurable and locally finite (i.e. bounded
on every compact subset of $G$), but it is known that (\cite[Theorem 3.7.5]{RS})
for every measurable weight $\om$ there is a continuous weight $\om'$ equivalent to $\om$.

For a (continuous) weight $w$ we define weighted spaces
	$$ L^1(G, \om) := \{ f \;\text{Borel measurable} : \norm{f}_{L^1(G, \om)} = \norm{\om f}_{L^1(G)} < \infty \}$$
and
	$$ L^\infty(G, \frac{1}{\om})
	:= \{ f \;\text{Borel measurable} : \norm{f}_{L^\infty(G,\frac{1}{\om})} = \norm{\frac{f}{w}}_{L^\infty(G)} < \infty \},$$
which are isometric to $L^1(G)$ and $L^\infty(G)$, respectively.
Moreover, $L^\infty(G, \frac{1}{\om})$ is the dual of $L^1(G, \om)$ with the duality bracket
	$$\la f, g \ra = \int_G f(x)g(x)d\mu(x),\;\; f\in L^1(G, \om),\; g\in L^\infty(G, \frac{1}{\om}),$$
where $\mu$ is the left Haar measure on $G$.

For discrete $G$ we denote $L^1(G, \om)$ by $\ell^1(G, \om)$.
With the convolution multiplication $L^1(G, \om)$ becomes a Banach algebra (due to the mutiplicativity of the weight),
and the algebras $L^1(G,\om)$ are called the {\it Beurling algebras} on $G$. For more details see \cite[Chapter 7]{DL}.

\subsection{Operator spaces} We will now briefly remind the reader about the basic properties of operator spaces.
We refer the reader to \cite{ER} for further details concerning the notions presented below.

Let $\mathcal{H}$ be a Hilbert space. Then there is a natural identification between the space
$M_n(B(\mathcal{H}))$ of $n\times n$ matrices with entries in $B(\mathcal{H})$ and the space
$B(\mathcal{H}^n)$. This allows us to define a sequence of norms $\{\norm{\cdot}_n \}$ on the spaces
$\{M_n(B(\mathcal{H}))\}$. If $V$ is any subspace of $B(\mathcal{H})$, then the spaces
$M_n(V)$ also inherit the above norm. A subspace $V\subseteq B(\mathcal{H})$ together with the family
$\{\norm{\cdot}_n \}$ of norms on $\{M_n(V)\}$ is called a \textit{concrete operator space}. This
leads us to the following abstract definition of an operator space:

\begin{defn}
An operator space is a vector space $V$ together with a family $\{\norm{\cdot}_n\}$ of Banach space norms on $M_n(V)$ such that for each
$A\in M_n(V),B\in M_m(V)$ and $[a_{ij}],[b_{ij}]\in M_n(\mathbb{C})$

\[
\begin{array}{ll}
i) & \norm{ \left[
\begin{array}{ll}
A & 0 \\
0 & B
\end{array}
\right] }_{n+m}=\max \{\norm{A}_n, \norm{B}_m\} \\
& \\
ii) & \norm{ [a_{ij}]A[b_{ij}] }_n \leq \norm{[a_{ij}]} \norm{A}_n \norm{[b_{ij}]}
\end{array}
\]

Let $V,W$ be operator space, $\varphi :V\rightarrow W$ be linear. Then
\[
\norm{ \varphi }_{cb}=\sup_n \{\norm{ \varphi _{n}} \}
\]
where $\varphi _{n}:M_{n}(V)\rightarrow M_{n}(W)$ is given by
\[
\varphi _{n}([v_{ij}])=[\varphi (v_{ij})].
\]

We say that $\varphi $ is completely bounded if $\norm{ \varphi }_{cb}<\infty ;$
is completely contractive if $\norm{ \varphi }_{cb}\leq 1$ and is a complete isometry if each $\varphi _{n}$ is an isometry.

Given two operator spaces $V$ and $W$, we let $CB(V,W)$ denote the space of all completely
bounded maps from $V$ to $W$. Then $CB(V,W)$ becomes a Banach space with respect to the norm
$\norm{ \cdot }_{cb}$ and is in fact an operator space via the identification $M_n(CB(V,W))\cong
CB(V,M_n(W))$.
\end{defn}

It is well-known that every Banach space can be given an operator
space structure, though not necessarily in a unique way. It is also
clear that any subspace of an operator space is also an operator space with respect to the inherited norms.
Moreover, for duals and preduals of operator spaces, there are canonical operator space structures.
As such the predual of a von Neumann algebra
and the dual of a $C^*$-algebras respectively, the Fourier and Fourier-Stieltjes algebras
inherit natural operator space structures.

Given two Banach spaces $V$ and $W$, there are many ways to define a norm on the
algebraic tensor product $V\otimes W$. Distinguished amongst such norms is the \textit{Banach space
projective tensor product norm} which we denote by $V\otimes ^\gamma W$. A fundamental property of
the projective tensor product is that there is a natural isometry between $(V\otimes ^\gamma W)^*$ and
$B(V,W^*)$. Given two operator spaces $V$ and $W$, there is an operator space analog of the projective
tensor product norm which we denote by $V\widehat{\otimes}\,W$. In this case, we have a natural
complete isometry between $(V\widehat{\otimes}\,W)^*$ and $CB(V,W^*)$.

\begin{defn}

A Banach algebra $A$ that is also an operator space is called a
\textit{completely contractive Banach algebra} if the multiplication map
\[m:A\widehat{\otimes}A\rightarrow A,\; u\otimes v \mapsto uv\]
is completely contractive. In particular, both $B(G)$ and $A(G)$ are
completely contractive Banach algebras (see \cite{Em}).

Let $A$ be a completely contractive Banach algebra. An operator space $X$ is called a
\textit{completely bounded $A$-bimodule}, if $X$ is a Banach $A$-bimodule and if the maps
\[A\widehat{\otimes} X\rightarrow X \ \ , \ \ u\otimes x \mapsto ux \]
and
\[X\widehat{\otimes} A\rightarrow X \ \ , \ \ x\otimes u \mapsto xu \]
are completely bounded. In general, if $X$ is a completely bounded $A$-bimodule, then its dual
space $X^*$ is a completely bounded $A$-bimodule via the actions
\[(u\cdot T)(x)=T(xu) \ \ \ , \ \ \ (T\cdot u)(x)=T(ux)\]
for every $u\in A$, $x\in X$, and $T\in X^*$.

$A$ is \emph{operator amenable} if, for every completely contractive
Banach $X$-bimodules, every completely bounded derivation from $A$ into $X^*$ is inner.
One characterization of operator amenability is that $A$ is operator amenable if and only if
it has a \emph{vertual diagonal} \cite{J3} i.e. there is $M\in (A \widehat{\otimes} A)^{**}$ such that
$$a\cdot M=M \cdot a \ \ , \ \ \ a m^{**}(M)= m^{**}(M)a=a \ \ (a\in A),$$
where $a\cdot (b\otimes c)=ab \otimes c$, $(b\otimes c)\cdot a=b \otimes ca$, and
$\pi : A\otimes A \to A$ is the multiplication operator.
$A$ is \emph{operator weakly amenable} if every
completely bounded derivation from $A$ into $A^*$ is inner \cite{FW}.
\end{defn}

\subsection{Arens regular Banach algebras}

Let $A$ be a (completely contractive) Banach algebra. We can define
two products on $A^{**}$, the second dual of $A$, known as the {\it first and second
Arens products} as follows: For every $F,E\in A^{**}$ with
$F = w^*-\lim_\alpha f_\alpha$, and $E=w^*-\lim_\beta g_\beta$, $\{f_\alpha\}$, $\{g_\beta\}\subset A$,
we let the first (second) Arens product be
	$$F\bo E= w^*-\lim_\alpha \lim_\beta f_\alpha g_\beta \;\;\text{and}\;\; F \diamond E= w^*-\lim_\beta \lim_\alpha  g_\beta f_\alpha.$$
We say that $A$ is {\bf Arens regular} if the first and second Arens products
always coincide i.e.
	$$F\bo E=F\diamond E, \;\;\forall F,E\in A^{**}.$$
	
If $A$ is Arens regular, then every closed subalgebra of $A$ or a quotient of $A$ is also Arens regular.
It is well-known that C$^*$-algebras (or more generally, operator algebras) are Arens regular. However the group
algebra $L^1(G)$ is Arens regular if and only if $G$ is finite \cite{DL}.

Also the Arens regularity of the Fourier algebra $A(G)$ implies that $G$ is discrete, non-amenable, and does not
contain a copy of $\mathbb{F}_2$, the free group on two generators \cite{B1}, \cite{B2}.
It is still an open question whether the Arens regularity of $A(G)$ implies that $G$ is finite.

\section{Beurling-Fourier algebra on a locally compact group}\label{S:Beurling-Fourier alg}

\subsection{General construction}\label{S:General cons}

We begin the construction of a dual object of classical Beurling algebras
by the following reformulation of the multiplicativity of weight functions.

Let $G$ be a locally compact group and recall the co-multiplication
	$$\Gamma : L^\infty(G) \rightarrow L^\infty(G\times G), \; f\mapsto \Gamma f,$$
where $\Gamma f(s,t) = f(st)$.
This $\Gamma$ can be easily extended for unbounded Borel measurable functions on $G$
using the same formula.

Now let $\om : G \rightarrow (0,\infty)$ be a continuous function.
Then the submultiplicativity of $\om$ is clearly equivalent to the condition
	\begin{equation}\label{cond}
	\Gamma(\om)(\om^{-1} \otimes \om^{-1}) \le 1.
	\end{equation}

Our aim is first to define a dual version of weight functions satisfying a dual version of \eqref{cond},
which requires an extension of a $*$-isomorphism for certain unbounded operators.
We will describe the process in the following lemma. We refer the reader to \cite[Chapters X.1 and
X.2]{Con} and \cite[Chapter 5.5.6]{KR83} for the definition and basic properties of unbounded
operators.

	\begin{lem}\label{extension}
	Let $\M \subseteq B(H)$ and $\n \subseteq B(K)$ be von Neumann algebras and $\Phi:\M\rightarrow \n$ be a $*$-isomorphism.
	We suppose that
		\begin{enumerate}
			\item there is an increasing net of projections $(E_i)_{i\in \I} \subseteq \M$ such that
			$\D := \bigcup_{i\in \I}E_i(H)$ is dense in $H$,
			\item there is a closed operator $W$ on $H$ with the domain containing $\D$ such that $WE_i$'s are bounded self-adjoint operators in $\M$, and
			\item $\D' := \bigcup_{i\in \I}\Phi(E_i)(K)$ is dense in $K$.
		\end{enumerate}
	Then, the linear operator $B$ defined on $\D'$ by
		$$B(k) := \Phi(WE_i)(k)\;\;\text{for}\;\; k\in \Phi(E_i)(K)$$
	is a closable operator on $K$, whose closure is self-adjoint.
	\end{lem}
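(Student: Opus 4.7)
The plan is to prove the lemma in four steps: establish that $B$ is well-defined on $\D'$, verify that it is symmetric (hence closable), and finally show that its closure $\bar{B}$ is self-adjoint via von Neumann's essential self-adjointness criterion. Throughout I will write $T_i := \Phi(WE_i)$ and $F_i := \Phi(E_i)$, which are bounded self-adjoint operators and projections in $\n$, respectively, since $\Phi$ is a $*$-isomorphism. The key compatibility relation, which drives the entire argument, is that $E_jE_i = E_i$ for $i \le j$, so applying $\Phi$ gives $T_jF_i = \Phi(WE_jE_i) = \Phi(WE_i) = T_i$; taking adjoints and using $T_j^* = T_j$ yields $F_iT_j = T_i$ as well, and in particular $T_i = F_iT_iF_i$.

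Given this, well-definedness of $B$ is immediate: for $k$ lying in both $F_i(K)$ and $F_j(K)$, any $l \ge i,j$ gives $T_lk = T_lF_ik = T_ik$ and similarly $T_lk = T_jk$, so the two candidate values coincide. Symmetry follows by the same device: for $k_1 \in F_i(K)$, $k_2 \in F_j(K)$, and $l \ge i,j$, one has $B(k_1) = T_lk_1$ and $B(k_2) = T_lk_2$, whence $\la Bk_1, k_2\ra = \la k_1, Bk_2\ra$ by self-adjointness of $T_l$; and since $B$ is densely defined and symmetric, it is automatically closable.

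The final and crucial step is to upgrade symmetry of $\bar{B}$ to self-adjointness. I will invoke von Neumann's criterion, reducing this to showing that $\text{ran}(B\pm i)$ is dense in $K$. Here the identity $T_i = F_iT_iF_i$ pays off: $T_i$ corestricts to a bounded self-adjoint operator $\tilde{T}_i$ on the Hilbert space $F_i(K)$, and because its spectrum is real, $\tilde{T}_i \pm i\,\text{Id}_{F_i(K)}$ is invertible on $F_i(K)$. Since $B$ agrees with $\tilde{T}_i$ on $F_i(K)$, it follows that $(B\pm i)(F_i(K)) = F_i(K)$, so $\text{ran}(B\pm i) \supseteq \bigcup_i F_i(K) = \D'$, which is dense by hypothesis. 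The step I expect to require the most care is precisely this last one: producing elements in the range of $B\pm i$ without having an a priori spectral decomposition of $W$. However, the observation that each $T_i$ lives in the corner $F_i\n F_i$ reduces the problem cleanly to the bounded self-adjoint case.
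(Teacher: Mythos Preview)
Your proof is correct. The paper's own proof simply observes that $(\Phi(E_i))_{i\in\I}$ is increasing (so $B$ is well-defined) and then defers entirely to \cite[Lemma 5.6.1]{KR83} for closability and self-adjointness of the closure; you have spelled out the standard argument behind that citation---symmetry from compatibility of the $T_i$'s, then essential self-adjointness via the range criterion using that each $T_i$ lives in the corner $F_i\n F_i$---so the approaches coincide in substance.
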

\begin{proof}
Since $(E_i)_{i\in \I}$ is increasing, $(\Phi(E_i))_{i\in \I}$ is also an increasing net of projections in $\n$, so that $B$ is well-defined.
Now we can apply the same argument as in \cite[Lemma 5.6.1]{KR83} to show that
$B$ is closable with the self-adjoint closure acting on $K$.
\end{proof}
	
	\begin{defn}\label{Def-extension}
	Suppose that we are in the same situation as in Lemma \ref{extension}.
	We define $\Phi(W)$ acting on $K$ by $\Phi(W) := \overline{B}$,
	where $\overline{B}$ is the closure of $B$.
	\end{defn}

	\begin{rem}
	(1) The above definition of $\Phi(W)$ is an extension of $\Phi$ in the following sense.
	If $W$ is bounded with the domain $H$, then $\Phi(W)$ defined in Definition \ref{Def-extension} (denoted by $T$)
	and the original $\Phi(W)$ (denoted by $S$) coincide on a dense subspace of $K$.
	Indeed, if we put $W_i = WE_i$, $i\in \I$, where $E_i$'s are the projection in Lemma \ref{extension}, then
	we have $W_i \rightarrow W$ strongly, and so, $W\in \M$. Moreover since $W_i$'s and $W$ are uniformly bounded, we have actually $W_i \rightarrow W$ $\sigma$-strongly.
	Thus, $\Phi(W_i) \rightarrow S$ $\sigma$-strongly. From the definition it is clear that $\Phi(W_i)x \rightarrow Tx$ for all $x\in \D'$,
	so that $Tx=Sx$ for all $x\in \D'$, and $\D'$ is dense in $K$.
	
	(2) We will use the convention that if two bounded operators $S$, $T$, acting on a Hilbert space,
	coincide on a dense subspace, then we identify $S$ and $T$, and we use the notation $S=T$.
	\end{rem}

Now we go back to the definition of a dual version of weight functions.
Let $VN(G)$ be the group von Neumann algebra,
and let $\Gamma$ be the usual co-multiplication on $VN(G)$ defined by
	$$\Gamma : VN(G) \rightarrow VN(G\times G), \; \lambda(s) \mapsto \lambda(s) \otimes \lambda(s),$$
where $\lambda$ is the left regular representation of $G$.
Recall that a densely defined (possibly unbounded) operator $T$ acting on $H$ is said to be {\it affiliated to $\M$}, a von Neumann algebra in $B(H)$,
if $UTU^* = T$ for any unitary $U \in \M'$ \cite[Chapter 5.5.6]{KR83},
and that $T$ is called {\it boundedly invertible} if there is a bounded operator $S:H\rightarrow H$
such that $TS = id_H$ and $ST \subseteq id_H$ \cite[1.14 Definition]{Con}. In the latter case, the choice
of $S$ is unique so we denote $S$ by $T^{-1}$
and call it the {\it bounded inverse of $T$}.
	
	\begin{defn}\label{Def-weight}
	Let $G$ be a locally compact group, and let $VN(G) \subseteq B(H)$ be a fixed representation of $VN(G)$.
	A closed densely defined positive operator $W$ on $H$ affiliated to $VN(G)$ with the bounded inverse $W^{-1} \in VN(G)$
	is called a {\bf weight on the dual of $G$} if
		\begin{enumerate}
			\item $W$ satisfies the conditions in Lemma \ref{extension} with
			$\M=VN(G)$, $\n=VN(G\times G)\subseteq B(H\otimes_2 H)$, and $\Phi=\Gamma$,
			\item $\D_0 := \{ x\in H\otimes_2 H: (W^{-1}\otimes W^{-1})x \in \D'\}$ is dense in $H\otimes_2 H$,
			\item $\Gamma(W)(W^{-1}\otimes W^{-1})$ is bounded on $\D_0$
			(we still denote its unique extension to $H\otimes_2 H$ by $\Gamma(W)(W^{-1}\otimes W^{-1})$),
			\item $\Gamma(W)(W^{-1}\otimes W^{-1}) \le 1_{VN(G\times G)},$
and
			\item $VN(G)W^{-1}:=\{AW^{-1} : A\in VN(G) \}$ is $w^*$-dense in $VN(G)$.
		\end{enumerate}
	We say that a weight $W$ on the dual of $G$ is {\bf central} if $WE_i \in VN(G)'$ for any $i\in \I$,
	where $(E_i)_{i\in \I}$ is the net of projections in Lemma \ref{extension}.
	\end{defn}

	\begin{rem}\label{R:weight-basis properties}

(1) In this paper, we will usually exploit the representation of $VN(G)$
	coming from the representation theory of the group $G$ in the concrete examples,
	namely the case of compact groups and the case of Heisenberg groups.
	
(2) We require our weight $W$ to be boundedly invertible
in order to avoid unnecessary difficulties of unbounded inverses.
Of course, we sacrifice some generality here,
but all of our examples show that this is a reasonable restriction.

	\end{rem}

		\begin{defn}\label{Def-BF-alg}
		For a weight $W$ on the dual of $G$ we define
\begin{align}\label{Eq:weight-VN(G,W-1)}
			VN(G,W^{-1}) := \{AW : A \in VN(G)\}.
\end{align}
Hence each element of $VN(G,W^{-1})$ is a densely defined operator on $H$.\
We put the canonical linear structure on $VN(G,W^{-1})$.
Since $W^{-1} \in VN(G)$, it follows that the mapping
\begin{align}\label{Eq:isometry-VN(G) and weight}
\Phi : VN(G) \rightarrow VN(G,W^{-1}),\; A \mapsto AW
\end{align}
is a linear isomorphism. We endow an operator space structure on $VN(G,W^{-1})$
so that $\Phi$ induces a complete isometry. In particular,
		$$\norm{AW}_{VN(G,W^{-1})} = \norm{A}_{VN(G)}.$$
		We will denote the predual of $VN(G,W^{-1})$ by $A(G,W)$.
		
		Finally we define $C^*_r(G,W^{-1})$ by
		$$C^*_r(G,W^{-1}) := \{AW : A \in C^*_r(G)\}.$$
		Clearly $\Phi|_{C^*_r(G)}$ is a complete isometry between $C^*_r(G)$ and $C^*_r(G,W^{-1})$.

		\end{defn}

	\begin{rem}\label{R:weighted spaces}
			(1) The above definition of $A(G,W)$ is an abstract one,
			but we have a natural realization of $A(G,W)$ as follows.
			For any $\phi\in A(G)$, $W^{-1}\phi$ is an element in $A(G)$ satisfying
				$$(W^{-1}\phi)(A) = \phi(AW^{-1}), \ \ A\in VN(G).$$
			Hence we have
\begin{align}\label{Eq:weight-A(G,W)}
				A(G,W) = \{ W^{-1}\phi : \phi\in A(G) \}
\end{align}
			with the duality bracket
				\begin{align}\label{Eq:weight-duality bracket}
				\la W^{-1}\phi, AW \ra = \phi(A)
				\end{align}
			for $\phi \in A(G)$ and $A\in VN(G)$.
			Moreover, $\Phi$ is $w^*$-$w^*$ continuous and its preadjoint
			$\Phi_* : A(G,W) \rightarrow A(G)$ is given by
			$$\Phi_*(W^{-1}\phi) = \phi.$$	

			(2) The condition (5) of Definition \ref{Def-weight} is redundant if the
weight $W$ is central. Indeed, $VN(G)W^{-1}$ is $w^*$-dense in $VN(G)$ if
and only if the map $A(G) \to A(G)$, $\varphi \mapsto W^{-1}\varphi$
is one-to-one. Now suppose that $W^{-1}\varphi=0$. Then
$WE_iW^{-1}\varphi=0$, $i\in \I$, where $E_i$'s are the projection in Lemma \ref{extension}.
However $WE_i\in VN(G)'$, and so, $WE_iW^{-1}\varphi=W^{-1}\varphi WE_i=E_i \to 1_{VN(G)}$
strongly. Hence $\varphi=0$.
			
			(3) Since $W^{-1} \in VN(G)$, the inclusion map (or the formal identity)
			$j : VN(G) \to VN(G,W^{-1}),\; A \mapsto (AW^{-1})W$ is
			a completely bounded $w^*$-$w^*$ continuous map with $\norm{j}_{cb} \le \norm{W^{-1}}.$
			Moreover, $j$ has a dense range since $\Phi^{-1}\circ j : VN(G)\rightarrow VN(G)$,
 $A \mapsto AW^{-1}$ has a dense range by Definition \ref{Def-weight} (5). This implies that
 the preadjoint of $j$, $j_* : A(G,W) \to A(G)$ is completely bounded and one-to-one.
			Note that $j_*$ is clearly the formal identity.
			Thus we can (and will) assume that $A(G,W) \subseteq A(G)$
			and view any element $\phi \in A(G,W)$ as a continuous function on $G$ vanishing at infinity.
			
			(4) We do not know whether $W\otimes W$ always defines a weight on the dual
of $G\times G$. Nevertheless we can formally define
$VN(G\times G, W^{-1}\otimes W^{-1})$ and $A(G\times G, W\otimes W)$ similar to
(\ref{Eq:weight-VN(G,W-1)}) and (\ref{Eq:weight-A(G,W)}), respectively.
This induces the natural complete isometry
			\begin{align}\label{Eq:isometry-VN(G) and weight-tensor}
			\Psi : VN(G\times G) \rightarrow VN(G\times G,W^{-1}\otimes W^{-1}),\; A\otimes B \mapsto (A\otimes B)(W\otimes W).
			\end{align}
In fact, we can identify
				$$\big(A(G,W)\prt A(G,W)\big)^* = VN(G\times G, W^{-1}\otimes W^{-1}).$$
			Indeed, from (\ref{Eq:isometry-VN(G) and weight}) and (\ref{Eq:isometry-VN(G) and weight-tensor}) we have the following composition of complete isometries
				$$A(G,W)\prt A(G,W) \stackrel{\Phi_*\otimes \Phi_*}{\longrightarrow} A(G)\prt A(G) \cong A(G\times G) \stackrel{\Psi^{-1}_*}{\longrightarrow} A(G\times G, W\otimes W),$$
			which can be easily checked to be the formal identity.
			
	\end{rem}

Now we would like to endow a completely contractive Banach algebra structure on $A(G,W)$.
Recall that the Banach algebra structure of $A(G)$ comes from the co-multiplication $\Gamma$, so that
we will consider an appropriate map $VN(G,W^{-1})\rightarrow VN(G\times G, W^{-1}\otimes W^{-1})$,
which is essentially the extension of $\Gamma$.
By (3) in Definition \ref{Def-BF-alg} we have a normal complete contraction
	$$\widetilde{\Gamma} : VN(G) \rightarrow VN(G\times G),$$
	defined by
	$$A \mapsto \Gamma(A)\Gamma(W)(W^{-1}\otimes W^{-1}).$$
We define the $w^*$-$w^*$ continuous complete contraction $$\Gamma^W : VN(G,W^{-1})\rightarrow VN(G\times G, W^{-1}\otimes W^{-1})$$ by
	\begin{equation}\label{modified-co-multiplication}
	\Gamma^W := \Psi \circ \widetilde{\Gamma} \circ \Phi^{-1}.
	\end{equation}
We can say that $\Gamma^W$ is essentially an extension of $\Gamma$ in the following sense.

\begin{thm}\label{T:extension of co-product}
Let $G$ be a locally compact group, and let $W$ be a weight on the dual of
$G$. Then the following diagram is commutative:
\[
\xymatrix{
VN(G) \ar@<.5ex>[rr]^{\Gamma} \ar@<.5ex>[d]^{j}
& & VN(G\times G) \ar[d]^{j \otimes j}   \\
VN(G,W^{-1}) \ar@<.5ex>[rr]^{\Gamma^W} & & VN(G\times G,W^{-1}\otimes W^{-1})
. }
\]
\end{thm}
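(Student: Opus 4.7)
My plan is to chase a general element $A\in VN(G)$ around the two paths of the diagram and verify that both produce the same element of $VN(G\times G, W^{-1}\otimes W^{-1})$. Going right-then-down, since $j\otimes j$ is the formal identity $B\mapsto \Psi(B(W^{-1}\otimes W^{-1}))$ (by analogy with the definition of $j$ via $\Phi$),
$$(j\otimes j)(\Gamma(A)) = \Psi\bigl(\Gamma(A)(W^{-1}\otimes W^{-1})\bigr).$$
Going down-then-right, $j(A)=\Phi(AW^{-1})$, so $\Phi^{-1}(j(A))=AW^{-1}$; using that $\Gamma$ is a $*$-homomorphism on $VN(G)$ and the definition of $\widetilde{\Gamma}$,
$$\Gamma^W(j(A)) = \Psi\bigl(\widetilde\Gamma(AW^{-1})\bigr) = \Psi\bigl(\Gamma(A)\,\Gamma(W^{-1})\,\Gamma(W)(W^{-1}\otimes W^{-1})\bigr).$$
Because $\Psi$ is a linear bijection, commutativity reduces to the cancellation identity $\Gamma(W^{-1})\Gamma(W)=1_{VN(G\times G)}$, interpreted via the paper's convention that bounded operators agreeing on a dense subspace are identified.

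This cancellation is the technical heart of the argument and the main obstacle, because $\Gamma(W^{-1})$ is a bounded operator in $VN(G\times G)$ while $\Gamma(W)$ is the closed densely defined extension built in Lemma \ref{extension} and Definition \ref{Def-extension}; one cannot simply invoke multiplicativity of $\Gamma$ as a $*$-homomorphism of $VN(G)$. To establish it, let $(E_i)_{i\in\I}\subseteq VN(G)$ be the increasing net of projections from Lemma \ref{extension}, so each $WE_i\in VN(G)$ is bounded self-adjoint and $\D'=\bigcup_i \Gamma(E_i)(H\otimes_2 H)$ is dense in $H\otimes_2 H$. Since the range of $E_i$ lies in $\D\subseteq\mathrm{dom}(W)$ and $W^{-1}W$ acts as the identity on $\mathrm{dom}(W)$, one obtains the bounded-operator identity $W^{-1}(WE_i)=E_i$ on $H$. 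Applying the $*$-homomorphism $\Gamma$ to this bounded identity yields $\Gamma(W^{-1})\Gamma(WE_i)=\Gamma(E_i)$ in $VN(G\times G)$.

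To finish, for any $k\in\Gamma(E_i)(H\otimes_2 H)$ the construction of $\Gamma(W)$ in Lemma \ref{extension} gives $\Gamma(W)k=\Gamma(WE_i)k$, and hence
$$\Gamma(W^{-1})\Gamma(W)k=\Gamma(W^{-1})\Gamma(WE_i)k=\Gamma(E_i)k=k.$$
Since this identity holds on the dense subspace $\D'$ and $\Gamma(W^{-1})\Gamma(W)$ is bounded by condition (3) of Definition \ref{Def-weight}, the convention above identifies it with $1_{VN(G\times G)}$. Substituting back yields $\widetilde\Gamma(AW^{-1})=\Gamma(A)(W^{-1}\otimes W^{-1})$, and applying $\Psi$ delivers the desired equality of the two compositions.
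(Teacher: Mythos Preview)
Your approach is essentially the paper's: both reduce the diagram to the identity $\widetilde{\Gamma}(AW^{-1})=\Gamma(A)(W^{-1}\otimes W^{-1})$ and establish it via the cancellation $\Gamma(W^{-1})\Gamma(W)k=k$ for $k\in\D'$. Your derivation of that cancellation from the bounded identity $W^{-1}(WE_i)=E_i$ and multiplicativity of $\Gamma$ is in fact a bit cleaner than the paper's route through $\sigma$-strong convergence of $\Gamma(W^{-1})\Gamma(WE_i)$.

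There is, however, one misstatement you should fix. You write that ``$\Gamma(W^{-1})\Gamma(W)$ is bounded by condition (3) of Definition \ref{Def-weight}.'' Condition (3) asserts boundedness of $\Gamma(W)(W^{-1}\otimes W^{-1})$ on $\D_0$, which is a different operator in a different order; it says nothing about $\Gamma(W^{-1})\Gamma(W)$, and indeed the latter is only defined on $\mathrm{dom}(\Gamma(W))$, so it is not a bounded operator on $H\otimes_2 H$ to which the paper's identification convention applies. The repair is exactly what the paper does: you do not need a global identity $\Gamma(W^{-1})\Gamma(W)=1_{VN(G\times G)}$, only the pointwise identity on $\D'$. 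For $x\in\D_0$ one has $(W^{-1}\otimes W^{-1})x\in\D'$ by definition, whence
\[
\Gamma(A)\Gamma(W^{-1})\Gamma(W)(W^{-1}\otimes W^{-1})x=\Gamma(A)(W^{-1}\otimes W^{-1})x.
\]
Now both sides are bounded operators on all of $H\otimes_2 H$ (the left because $\widetilde{\Gamma}$ is a complete contraction, the right trivially), and $\D_0$ is dense by condition (2), so they agree everywhere. With this correction your argument is complete and matches the paper's.
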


\begin{proof}
It suffices to show that for every $A\in VN(G)$,
\begin{equation}\label{Eq:extension of co-product}
\Gamma^W(A)x = \Gamma(A)x
\end{equation}
for all $x\in \D(W)\otimes \D(W)$, where $\D(W)$ is the domain of $W$.
Let $W_i = WE_i$, $i\in \I$, where $E_i$'s are the projection in Lemma \ref{extension}. Then we have
$W^{-1}W_i x \rightarrow x$ for all $x\in \D=\bigcup_{i\in I} E_i(H)$. Since $W^{-1}W_i$'s are uniformly bounded and $\D$ is dense in $H$,
we have $W^{-1}W_i \rightarrow 1_{VN(G)}$ $\sigma$-strongly.
Thus $\Gamma(W^{-1})\Gamma(W_i) \rightarrow 1_{VN(G\times G)}$ $\sigma$-strongly.
Since $$\Gamma(W_i)x \rightarrow \Gamma(W)x$$ for all $x\in \D'=\bigcup_{i\in I} \Gamma(E_i)(H\otimes_2 H)$, we have $\Gamma(W^{-1})\Gamma(W)x=x$ for all $x\in \D'$,
so that for every $A\in VN(G)$
	\begin{align*}
	\Gamma(AW^{-1})\Gamma(W)(W^{-1}\otimes W^{-1})x & = \Gamma(A)\Gamma(W^{-1})\Gamma(W)(W^{-1}\otimes W^{-1})x\\
	& = \Gamma(A)(W^{-1}\otimes W^{-1})x
	\end{align*}
for all $x\in \D_0= \{ x\in H\otimes_2 H: (W^{-1}\otimes W^{-1})x \in \D'\}$. Since $\D_0$ is dense in $H\otimes_2 H$ ((2) of Definition \ref{Def-weight}) and both operators are bounded  ((3) of Definition \ref{Def-weight}), we have
$$\Gamma(AW^{-1})\Gamma(W)(W^{-1}\otimes W^{-1}) = \Gamma(A)(W^{-1}\otimes W^{-1}).$$
Thus
\begin{align*}
	\Gamma^W(A) &= \Psi(\widetilde{\Gamma}(\Phi^{-1}(A))) \\
	&= \Psi(\widetilde{\Gamma}(AW^{-1})) \\
	&= \Psi(\Gamma(AW^{-1})\Gamma(W)(W^{-1}\otimes W^{-1})) \\
	 & = \Psi(\Gamma(A)(W^{-1}\otimes W^{-1}))\\
	& = \Gamma(A)(W^{-1}\otimes W^{-1})(W\otimes W).
	\end{align*}
	Hence (\ref{Eq:extension of co-product}) follows.
	\end{proof}

We are now ready to define a suitable completely contractive Banach algebra structure on $A(G,W)$.
Indeed, since $\Gamma^W$ is a complete contraction and also a $w^*$-$w^*$ continuous mapping,
the preadjoint $\Gamma^W_*$ of $\Gamma^W$ defines a completely contractive Banach algebra structure on $A(G,W)$. This will allow us to present the following definition.

\begin{defn}\label{D:weight-Beurling Fourier alg}
Let $G$ be a locally compact group, and let $W$ be a weight on the dual of $G$.
The completely contractive Banach algebra $A(G,W)$ defined in Definition \ref{Def-BF-alg}
with the multiplication
$$\Gamma^W_* : A(G,W) \widehat{\otimes} A(G,W) \to A(G,W)$$
is called the {\bf Beurling-Fourier algebra} on $G$.

We will use the notation
	$$\phi\, \cdot_{A(G,W)} \psi = \Gamma^W_*(\phi \otimes \psi),\;\; \phi,\psi\in A(G,W),$$
while
	$$\phi \cdot_{A(G)} \psi = \Gamma_*(\phi \otimes \psi),\;\; \phi,\psi\in A(G).$$
\end{defn}
	
	\begin{rem}

		(1) It follows from the commuting diagram in Theorem \ref{T:extension of co-product} that
the following diagram is also commutative:
\[
\xymatrix{
A(G\times G, W\otimes W) \ar@<.5ex>[rr]^{\Gamma_*^W} \ar@<.5ex>[d]^{\iota_*\otimes \iota_*}
& & A(G,W) \ar[d]^{\iota_* }   \\
A(G\times G) \ar@<.5ex>[rr]^{\Gamma_*} & & A(G)
. }
\]
This implies that for every $\phi, \psi \in A(G,W)$,
$$\phi\, \cdot_{A(G,W)} \psi = \Gamma^W_*(\phi \otimes \psi)
= \Gamma_*(\phi \otimes \psi)=\phi \cdot_{A(G)} \psi,$$
or equivalently, the multiplication on $A(G,W)$
can be be understood as the pointwide multiplication of continuous functions
so that $A(G,W)$ can be viewed as a subalgebra of $A(G)$.

		(2) The definition of the Banach algebra structure on $A(G,W)$ for a weight $W$ on the dual of $G$ is somewhat technical
		since we are working with general unbounded operators.
		If $W$ is bounded or at least $VN(G)$ is semifinite with a trace $\tau$ and $W$ is $\tau$-measurable,
		then the above construction becomes much easier, since the extension of $*$-isomorphism can be easily understood (\cite[Lemma 2.4]{PS}).
		However, the weight $W$ we are interested in is usually pretty much unbounded,
		so that $W$ is not even $\tau$-measurable.

	\end{rem}

\subsection{Central weights on the dual of compact groups}\label{S:cen weight-compact group}

	We will show in this section how we can construct central weights on
	the duals of compact groups. We will see, eventually,
	that they are a generalization of classical weights on discrete groups.
	
	Let $G$ be a compact group. Then from (\ref{Eq:Fourier trans-compact-VN(G)}),
		$$VN(G) \cong \bigoplus_{\pi \in \widehat{G}}M_{d_\pi} \subseteq B(H),$$
	where $H = \bigoplus_{\pi \in \widehat{G}} \ell^2_{d_\pi}$.
	Note that the above direct sums over $\widehat{G}$ assume the repetition of the same component $d_\pi$-times for $\pi \in \widehat{G}$. For the rest of this
article, we always consider the above representation of $VN(G)$.
	
	Before proceeding further we need to know how the co-multiplication on $VN(G)$ is translated in the above representation of $VN(G)$.
	Note that the left regular representation $\lambda$ has the decomposition $\lambda \cong \bigoplus_{\pi \in \widehat{G}} \overline{\pi}.$
	Consider a central element $W\in VN(G)$ defined by
		$$W = \bigoplus_{\pi \in \widehat{G}}\om(\pi)1_{M_{d_\pi}},$$
	where $\om(\pi)$'s are positive numbers and $F = \{\pi \in \widehat{G} : \om(\pi)>0\}$ is a finite set. Then from (\ref{Eq:Fourier trans-compact}), (\ref{Eq:Fourier trans-compact-trig poly}) and the Fourier inversion formula
 (\ref{Eq:Inverse Fourier trans-compact-trig poly}) we have that
 $$\F(f) = (\widehat{f}(\pi))_{\pi\in \G}= W \ \text{or}\ W = \F\Big( \int_G f(x) \lambda(x) dx\Big),$$
 where
 $$f(x) = \sum_{\sigma \in \widehat{G}} d_\sigma \om(\sigma)\text{tr}(\sigma(x)),\; (x\in G).$$		
	Thus
		\begin{align*}
		\Gamma(W) & = \F\Big( \int_G f(x) \lambda(x) \otimes \lambda(x) dx \Big)\\
		& = \bigoplus_{\pi, \pi' \in \widehat{G}} \int_G f(x) \overline{\pi(x)} \otimes \overline{\pi'(x)}dx\\
		& = \bigoplus_{\pi, \pi' \in \widehat{G}}\bigoplus^{N}_{k=1} \int_G f(x) \overline{\tau^k(x)} dx,
		\end{align*}
	where
	\begin{equation}\label{Eq:tensor formula}
	\pi\otimes \pi' \cong \bigoplus^N_{k=1}\tau^k
	\end{equation}
	for some $(\tau^k)^N_{k=1} \subseteq \widehat{G}$.
	Note that we are allowing the repetition of $\tau^k$'s,
	so that it is possible that $\tau^k\cong \tau^l$ for some $k\neq l$.
	By the Schur orthogonality relation,
		\begin{align*}
		\Gamma(W)
		& = \bigoplus_{\pi, \pi' \in \widehat{G}}\bigoplus^{N}_{k=1} \int_G \sum_{\sigma} d_\sigma w(\sigma)
		\sum^{d_\sigma}_{i=1}\sigma_{ii}(x)\overline{\tau^k(x)}dx\\
		& = \bigoplus_{\pi, \pi' \in \widehat{G}}\bigoplus^{N}_{k=1} w(\tau^k)1_{M_{d_{\tau^k}}}.
		\end{align*}
		
We can change the order of the direct sum using the following notation.
	\begin{defn}\label{def-supp-rep}
	Let $\rho$ be a continuous finite-dimensional (unitary) representation of $G$.
	We recall that the {\bf support} of $\rho$ in $\G$ is the (finite) set of continuous
	finite-dimensional irreducible unitary representation of $G$ that appear
	in the decomposition of $\rho$, i.e.
	$$\supp\, \rho=\{ \tau_i\in \G \mid \rho \cong \oplus_{i=1}^n \tau_i \}.$$
	\end{defn}
Using the preceding definition and the fact that $F = \{\pi \in \widehat{G} : \om(\pi)>0\}$, we can write
	\begin{equation}\label{Eq:co-multi}
	\Gamma(W) = \bigoplus_{\sigma \in F} \bigoplus_{\substack{\pi,\pi'\in \widehat{G} \\ \sigma \in\, \supp\, \pi\otimes \pi'}}
	w(\sigma)1_{M_{d_\sigma}}.
	\end{equation}
	
Now we consider a function $\om : \widehat{G} \rightarrow (\delta,\infty)$ for some $\delta>0$.
We would like to construct a central weight associated to $\om$.
Let $\F$ be the set of all finite subset of $\widehat{G}$ directed by the inclusion.
For every $F\in \F$, let $E_F$ be the projection in $VN(G)$ defined by
	$$E_F = \bigoplus_{\pi \in F}1_{M_{d_\pi}}.$$
It is clear that $(E_F)_{F\in \F}$ is an increasing net of projections in $VN(G)$ and $\D = \bigcup_{F\in \F}E_F(H)$ is dense in $H$.
Let $W_F$ be the operator in $VN(G)$ given by
	$$W_F := \bigoplus_{\pi \in F}\om(\pi)1_{M_{d_\pi}}.$$
Consider the linear operator $W_0$ with the domain $\D$ defined by
	$$W_0(h) := W_F(h),\;\; h\in E_F(H).$$
If we apply the same argument as in \cite[Lemma 5.6.1]{KR83}, then we can show that
$W_0$ is closable with the self-adjoint closure. We will denote this closure by
	\begin{equation}\label{W-def}
	W = \bigoplus_{\pi \in \widehat{G}}\om(\pi)1_{M_{d_\pi}}.
	\end{equation}
We can exactly determine when $W$ is a weight on the dual of $G$.

\begin{thm}
Let $G$ be a compact group, and let $\om : \widehat{G} \rightarrow (\delta,\infty)$
be a function, where $\delta>0$. The operator $W$ constructed in (\ref{W-def})
defines a central weight on the dual of $G$ if and only if
\begin{align}\label{Eq:weight-central}
		\om(\sigma) \le \om(\pi)\om(\pi')
    \end{align}\label{co-multi}
for all $\sigma,\pi,\pi' \in \widehat{G}$ with $\sigma \in\, \supp \,\pi\otimes \pi'$.
\end{thm}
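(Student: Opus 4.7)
The plan is to verify the five conditions of Definition \ref{Def-weight} for the central operator $W$, exploiting the block-diagonal structure $VN(G) \cong \bigoplus_{\pi\in\G} M_{d_\pi}$ throughout, and then to reduce the operator inequality in condition (4) to the stated scalar inequality.

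First I would dispose of the technical prerequisites. With $\I = \F$ and the projections $E_F = \bigoplus_{\pi \in F} 1_{M_{d_\pi}}$, the net $(E_F)$ is increasing and $\D = \bigcup_F E_F(H)$ is dense in $H$; each $W E_F = W_F$ is bounded self-adjoint and lies in $Z(VN(G)) \subseteq VN(G)'$, which also yields centrality. For the density of $\D' = \bigcup_F \Gamma(E_F)(K)$ in $K = H\otimes_2 H$, the Schur-orthogonality computation that produced \eqref{Eq:co-multi}, when specialized to $E_F$, identifies $\Gamma(E_F)$ as the spectral projection of $\Gamma$ onto the $\sigma$-isotypic subspaces of $K$ with $\sigma\in F$; since $K$ is the orthogonal sum of its $\sigma$-isotypic components, $\D'$ is dense. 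The operator $W^{-1} = \bigoplus_\pi \om(\pi)^{-1} 1_{M_{d_\pi}}$ lies in $VN(G)$ with norm at most $\delta^{-1}$, giving the bounded inverse. Condition (5) is immediate: for $B = \bigoplus B_\pi \in VN(G)$, the truncations $B_F = (\bigoplus_{\pi\in F}\om(\pi)B_\pi)W^{-1} \in VN(G)W^{-1}$ converge $w^*$ to $B$.

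The heart of the proof is the blockwise analysis of $\Gamma(W)(W^{-1}\otimes W^{-1})$. By Definition \ref{Def-extension} the closed operator $\Gamma(W)$, restricted to $\Gamma(E_F)(K)$, agrees with the bounded operator $\Gamma(W_F)$, whose blockwise description is furnished by \eqref{Eq:co-multi}. Since $W^{-1}\otimes W^{-1}$ acts as the nonzero scalar $\om(\pi)^{-1}\om(\pi')^{-1}$ on each $(\pi,\pi')$-block, it preserves both the block decomposition and the isotypic decomposition within each block, so $\D_0 = \D'$ is dense, which verifies (2). Using the decomposition $\pi\otimes\pi' = \bigoplus_k \tau^k$ from \eqref{Eq:tensor formula}, the composition $\Gamma(W)(W^{-1}\otimes W^{-1})$ acts on the $\tau^k$-isotypic piece inside the $(\pi,\pi')$-block as multiplication by the scalar $\om(\tau^k)/(\om(\pi)\om(\pi'))$.

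With this scalar description the equivalence is transparent: $\Gamma(W)(W^{-1}\otimes W^{-1})\le 1_{VN(G\times G)}$ amounts to each such scalar being at most $1$, which is exactly $\om(\sigma)\le \om(\pi)\om(\pi')$ whenever $\sigma\in\supp\,\pi\otimes\pi'$, and the same estimate supplies the boundedness required by (3). The main obstacle is not truly mathematical but rather notational: one has to reconcile the abstract closure from Definition \ref{Def-extension} with the explicit isotypic-block formula, i.e.\ to verify that on each finite-$F$ subspace $\Gamma(E_F)(K)$ the closed operator $\Gamma(W)$ indeed coincides with the bounded operator $\Gamma(W_F)$ given by \eqref{Eq:co-multi}. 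Once that identification is in place, both directions of the equivalence follow from the same elementary blockwise calculation.
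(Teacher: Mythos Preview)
Your proposal is correct and follows essentially the same route as the paper: both arguments verify conditions (1), (2), and (5) of Definition~\ref{Def-weight} as routine consequences of the block-diagonal structure, and then use the Schur-orthogonality computation \eqref{Eq:co-multi} to identify $\Gamma(W)(W^{-1}\otimes W^{-1})$ as the direct sum of scalars $\om(\sigma)/(\om(\pi)\om(\pi'))$, reducing conditions (3) and (4) to the stated scalar inequality. The only presentational differences are that you verify condition (5) directly via truncations while the paper invokes Remark~\ref{R:weighted spaces}(2), and you are slightly more explicit about matching the abstract closure $\Gamma(W)$ from Definition~\ref{Def-extension} with the concrete blockwise operator $\Gamma(W_F)$ on $\Gamma(E_F)(K)$.
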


\begin{proof}
Following the construction of $W$, it is routine to verify that $W$ is a closed densely
defined positive operator on $H=\bigoplus_{\pi \in \widehat{G}} \ell^2_{d_\pi}$
affiliated to $VN(G)$. Also
$W$ has the inverse $$W^{-1} = \bigoplus_{\pi \in \widehat{G}}\om(\pi)^{-1}1_{M_{d_\pi}}\in VN(G),$$
since $\om$ is bounded away from zero.
Moreover, \eqref{Eq:co-multi} implies that
	$$\Gamma(E_F) =
	\bigoplus_{\sigma \in F} \bigoplus_{\substack{\pi,\pi'\in \widehat{G} \\ \sigma \in\, \supp\, \pi\otimes \pi'}} 1_{M_{d_\sigma}}.$$
Thus it is clear that $\D' = \bigcup_{F\in \F}\Gamma(E_F)(H\otimes_2 H)$ is dense in $H\otimes_2 H$,
so that we can apply Lemma \ref{extension} to define $\Gamma(W)$.
Note that we have
	$$\Gamma(W)\Gamma(E_F)
	= \bigoplus_{\sigma \in F} \bigoplus_{\substack{\pi,\pi'\in \widehat{G} \\ \sigma \in\, \supp\, \pi\otimes \pi'}}
	w(\sigma)1_{M_{d_\sigma}}.$$
On the other hand,
	\begin{align*}
		W^{-1}\otimes W^{-1} &= \bigoplus_{\pi, \pi' \in \widehat{G}}\om(\pi)^{-1}\om(\pi')^{-1}1_{M_{d_\pi}}\otimes 1_{M_{d_{\pi'}}}\\
		&= \bigoplus_{\sigma \in \widehat{G}} \bigoplus_{\substack{\pi,\pi'\in \widehat{G} \\ \sigma \in\, \supp\, \pi\otimes \pi'}} \om(\pi)^{-1}\om(\pi')^{-1} 1_{M_{d_\sigma}},
		\nonumber
	\end{align*}
and so the condition (2) of Definition \ref{Def-weight} is clearly satisfied.
Moreover,
\begin{align}\label{Eq:bdd co prod-compact}
		\Gamma(W) (W^{-1}\otimes W^{-1})
		= \bigoplus_{\sigma \in \widehat{G}} \bigoplus_{\substack{\pi,\pi'\in \widehat{G} \\ \sigma \in\, \supp\, \pi\otimes \pi'}} \om(\sigma) \om(\pi)^{-1}\om(\pi')^{-1} 1_{M_{d_\sigma}}.
	\end{align}
Hence the condition (4) of Definition \ref{Def-weight} is equivalent to
the relation (\ref{Eq:weight-central}). Finally, it is clear that $W_F\in VN(G)'$
for every finite subset $F$ of $\G$, and so, by Remark \ref{R:weighted spaces}(2),
the condition (5) of Definition \ref{Def-weight} is satisfied.
Consequently, $W$ is a central weight on the dual of $G$ if and only if (\ref{Eq:weight-central})
is satisfied.
\end{proof}

The preceding theorem leads us to the following definition. This idea was also considered by
J. Ludwig, N. Spronk, and L. Turowska \cite{LST}.

\begin{defn}\label{D:weight-non abelian-central}
Let $G$ be a compact group, and $W= \bigoplus_{\pi\in \G}\om(\pi)1_{M_{d_\pi}}$ be a central weight on the dual of $G$
for a function $\om : \widehat{G} \rightarrow (\delta,\infty)$ ($\delta >0$) satisfying \eqref{Eq:weight-central}.
For convenience, we use $\om$ to represent $W$, $A(G,\om)$ to represent $A(G,W)$,
$VN(G,\om^{-1})$ to represent $VN(G,W^{-1})$, $C^*_r(G,\om^{-1})$ to represent
$C^*_r(G,W^{-1})$, and use the terminology that $\om$ is a {\bf central weight} on $\G$.
Finally, we define the {\bf symmetrization} of $\om$, denoted by $\Om$, to be
	$$\Om(\pi)=\om(\pi)\om(\overline{\pi}) \ \ \ (\pi \in \G).$$
In particular, we have the completely isometric identification
$$A(G,W)=C^*_r(G,W^{-1})^*.$$

\end{defn}

	\begin{rem}\label{rem-central}

(1) Since $A(G,\om) \subseteq A(G)$ boundedly, we can understand each element in $A(G,\om)$ as a continuous function on $G$.
			More precisely, we have
				$$A(G,\om) \cong \{ f\in C(G) : \|f\|_{A(G, \om)}=\sum_{\pi \in \G} d_\pi  \om(\pi) \|
				\widehat{f}(\pi) \|_1 <\infty\}.$$

(2) It is easy to verify that the symmetrization $\Om$ of $\om$ is also a central weight on $\G$.
It is also easy to check that for any two central weights $\om_1$ and $\om_2$ on $\G$,
the function $\om_1\om_2$ defined by
	$$(\om_1\om_2)(\pi) = \om_1(\pi)\om_2(\pi),\; \pi\in \G$$
is again a central weight on $\G$.

(3) Let $\{G_i\}_{i\in I}$ be a family of compact groups, and
$F(I)$ be the set of finite subsets of $I$.
It follows from \cite[Theorem 27.43 ]{HR2} that the dual of
$\prod_{i\in I}G_i$ consist of all the representations
$$(\pi_i)_{i\in F} : \prod_{i\in F} G_i \to B(\otimes_{i\in F} H_{\pi_i}) \ , \
(x_i)_{i\in F} \mapsto \otimes_{i\in F}\pi_i(x_i) \ (F\in F(I)).$$
Now suppose that, for every $i\in I$, $\om_i$ is a central weight on $\G_i$.
Then it is straightforward to see that
			the product function $\prod_{i\in I}\om_i$ given by
				$$(\prod_{i\in I}\om_i)((\pi_i)_{i\in F}) = \prod_{i\in F}\om_i(\pi_i)$$
			is again a central weight on the dual of $\prod_{i\in I}G_i$
			provided that $\prod_{i\in I}\om_i$ is bounded away from zero as well.

	\end{rem}
Let $m\in \N$, and  $\T^{(m)}$ denotes the $m$-times Cartesian product of $\T$. There are various classical
weight associated to $\widehat{\T^{(m)}}=\Z^{(m)}$ such as
	$$n \mapsto (1+\ln (1+\norm{n}))^a, \; \; n \mapsto (1+\norm{n})^a \ \ \ (a>0),$$
where $\norm{n}$ is the natural norm on $\Z^{(m)}$.
Since the dual of a non-abelian compact group $G$ is not a group anymore, we can not
use this idea to define weights on $\G$. However, as we see in the following example, our
generalization allows us to define very natural weights on $\G$.

\begin{exm}\label{E:weight-compact}
Let $G$ be a compact group, and let $a \geq 0$. We define the functions
$\sg_a$ and $\om_a$ from $\G$ into $[1,\infty)$ by
\begin{align}\label{Eq:weight-non-abelian-ln}
\sg_a(\pi)=(1+\ln d_\pi)^a \ \ \ (\pi \in \G),
\end{align}
\begin{align}\label{Eq:weight-non-abelian-dim}
\om_a(\pi)=d_\pi^a \ \ \ (\pi \in \G).
\end{align}
It follows from the tensor formula (\ref{Eq:tensor formula}) that both $\sg_a$ and $\om_a$
satisfy (\ref{Eq:weight-central}), and so, they are central weights on $\G$. Since the irreducible representations of abelian
groups are 1-dimensional, the preceding weights are trivial if $G$ is abelian.
Thus they are interesting for compact non-abelian groups.
\end{exm}

\subsection{Central weights on the dual of the Heisenberg groups}\label{S:cen weight-Heisenberg group}

	Let $H_d$ $(d\ge 1)$ be the Heisenberg group on $\mathbb{C}^d \times \mathbb{R}$.
	Our references for the Heisenberg groups are \cite[Chapter 1]{Tha} and \cite[Examples 6.7 and 7.6]{Fol}.
	For $h\in \Realzero (= \R\setminus \{0\})$ we consider the Schr\"{o}dinger representations of $H_d$
	acting on $\Hi = L^2(\mathbb{R}^d)$ defined by
		$$\pi_h(z,t)\varphi(\xi) = e^{iht}e^{ih(x\cdot \xi + \frac{1}{2}x\cdot y)}\varphi(\xi +y),$$
	where $\cdot$ is the usual inner product in $\mathbb{R}^d$, $z = x + iy$, $x,y \in \Real^d$ and $\varphi \in \Hi$.
	The Haar measure on $H_d$ is just the Lebesgue measure on $\mathbb{C}^d \times \mathbb{R}$, which will be denoted by $dzdt$.
	The Fourier transform on $H_d$ is defined as follows:
		$$\widehat{f}^{H_d}(h) = \int_{H_d} f(z,t)\overline{\pi_h}(z,t)dzdt,\;\; h\in \Realzero$$
	for $f\in L^1(H_d)$, and the Plancherel theorem says
		$$\int_{\Realzero} \norm{\widehat{f}^{H_d}(h)}^2_{S_2(\Hi)}d\mu(h)
		= \int_{H_d} \abs{f(z,t)}^2 dzdt,$$
	where $S_2(\Hi)$ is the Hilbert-Schmidt class on $\Hi$,
	$d\mu(h) = \frac{\abs{h}^d}{(2\pi)^{d+1}}dh$ on $\Realzero$ and $f\in L^1(H_d)\cap L^2(H_d)$.
	Moreover, it is well known that
		\begin{equation}\label{rep}
		\lambda \stackrel{\text{unitarily}}{\cong} \int^\oplus \overline{\pi_h} d\mu(h),
		\end{equation}
	where $\lambda$ is the left regular representation of $H_d$, and
		$$VN(H_d) \cong L^\infty(\mu;B(\Hi)) \subseteq B(H),$$
	where $H = L^2(\mu; S_2(\Hi))$.
	Note that the above vector-valued $L^\infty$ space $L^\infty(\mu;B(\Hi))$ can be naturally identified with
	the von Neumann algebra tensor product $L^\infty(\mu)\bar{\otimes}B(\Hi)$.
	
	Lastly, we recall the Fourier inverse transform
		$$\F^{-1} : L^1(\mu;S_1(\Hi)) \rightarrow L^\infty(H_d), \; X = (X(h)) \mapsto \F^{-1}(X),$$
	where
		$$\F^{-1}(X)(z,t) = \int_{\Realzero} \text{tr}(\pi_h(z,t)X(h))d\mu(h),$$
	$S_1(\Hi)$ is the trace class on $\Hi$ and $L^1(\mu;S_1(\Hi))$ refers to a vector-valued $L^1$ space.
	
	As in the compact group case, we need to know how the co-multiplication is translated in this setting.
In order to achieve this, we first need the following lemma.

We fix an orthonormal basis $(\xi_i)_{i\ge 1}$ for $\Hi$,
	and let $P_{ji} \in B(\Hi)$ is the operator defined by
	\begin{align}\label{Eq:Orth proj-Heisen}
	P_{ji}(\eta) = \la \eta, \xi_i \ra \xi_j \  ,  \ i,j\ge 1.
	\end{align}
	Then we get the following substitute for Schur orthogonality.
	
	\begin{lem}\label{Lem-Schur-Heisenberg}
	Let $g$ be a function in $\s(\Real)$, the Schwarz class on $\Real$.
	\begin{align}\label{orthogonality}
		\lefteqn{g(h')\delta_{ik}\delta_{jl}}\nonumber \\
		& = \int_{\Realzero} \! \int_{H_d} g(h)\la \pi_h(z,t)\xi_j, \xi_i \ra
		\overline{\la \pi_{h'}(z,t)\xi_k, \xi_l \ra} dzdtd\mu(h),\;\; i,j,k,l\ge 1
		\end{align}
	for every $h'\in \Realzero$.
	\end{lem}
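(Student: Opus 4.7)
The identity is an operator-valued Plancherel/orthogonality statement, so the plan is to prove it by applying the Fourier inversion formula on $H_d$ to a rank-one operator-valued symbol. Concretely, I would read the right-hand side, after Fubini, as the $(l,k)$ matrix coefficient of the group Fourier transform, at the point $h'$, of the function $f$ obtained as the inverse Fourier transform of the symbol $h\mapsto g(h)P_{ji}$. The whole argument then reduces to Fourier inversion on $H_d$ together with the two elementary identities $\operatorname{tr}(AP_{ji})=\langle A\xi_j,\xi_i\rangle$ and $\langle P_{ji}\xi_k,\xi_l\rangle=\delta_{ik}\delta_{jl}$ for $A\in B(\Hi)$.

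First I would set $G(h):=g(h)P_{ji}$ and verify $G\in L^1(\mu;S_1(\Hi))$: since $P_{ji}$ is rank one with trace-class norm $1$ and $d\mu(h)=\frac{|h|^d}{(2\pi)^{d+1}}dh$, the integrability follows from $g\in\s(\Real)$ (polynomial density against rapid decay). Applying $\F^{-1}$ from the excerpt and using $\operatorname{tr}(\pi_h(z,t)P_{ji})=\langle \pi_h(z,t)\xi_j,\xi_i\rangle$, I obtain
\[
f(z,t):=\F^{-1}(G)(z,t)=\int_{\Realzero} g(h)\la \pi_h(z,t)\xi_j,\xi_i\ra\, d\mu(h).
\]

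Next I would invoke Fourier inversion on $H_d$ to conclude $\widehat{f}^{H_d}(h')=g(h')P_{ji}$ for $\mu$-a.e.\ $h'$. Taking the $(l,k)$-matrix entry of both sides, the left-hand side gives $g(h')\langle P_{ji}\xi_k,\xi_l\rangle=g(h')\delta_{ik}\delta_{jl}$, while the defining integral for $\widehat{f}^{H_d}(h')$ against $\overline{\pi_{h'}}$ gives
\[
\widehat{f}^{H_d}(h')_{lk}=\int_{H_d}f(z,t)\,\overline{\la \pi_{h'}(z,t)\xi_k,\xi_l\ra}\,dzdt.
\]
Substituting the formula for $f$ and interchanging the integrals over $H_d$ and $\Realzero$ by Fubini yields exactly \eqref{orthogonality}. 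To upgrade the a.e.\ equality to all $h'\in\Realzero$, I would note that both sides of \eqref{orthogonality}, as functions of $h'$, are continuous (the RHS by dominated convergence using the boundedness of the matrix coefficients of $\pi_{h'}$ by $1$ and the Schwartz decay of $g$ in the $h$-variable).

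The main technical obstacle is justifying Fourier inversion and the Fubini step for a function $f$ that is not manifestly in $L^1(H_d)$. I would handle this by working in the $L^2$-Plancherel framework: since $g\in\s(\Real)$, $G\in L^2(\mu;S_2(\Hi))\cap L^1(\mu;S_1(\Hi))$, so $f\in L^2(H_d)$ and the identity $\widehat{f}^{H_d}=G$ holds via the Plancherel isometry; the pairing computation above is then the polarized Plancherel identity
\[
\int_{\Realzero}\operatorname{tr}\!\bigl(\widehat{f_1}(h)\widehat{f_2}(h)^{*}\bigr)d\mu(h)=\int_{H_d}f_1\overline{f_2}\,dzdt
\]
applied to $\widehat{f_1}=G$ and $\widehat{f_2}=\mathbf{1}_{\{h'\}\text{-approx.}}P_{lk}$, after which the continuity argument removes the approximation.
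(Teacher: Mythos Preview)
Your proposal is correct and follows essentially the same route as the paper: define the rank-one symbol $G(h)=g(h)P_{ji}$, compute $f=\F^{-1}(G)$ via the trace identity, apply Fourier inversion on $H_d$ to recover $\widehat{f}^{H_d}(h')=g(h')P_{ji}$, and read off the $(l,k)$ matrix coefficient. The paper simply cites the inversion theorem \cite[Theorem 1.3.2]{Tha} and does not spell out the a.e.\ versus everywhere issue or the Fubini/Plancherel justification that you add; your extra care is welcome but not a different approach.
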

	\begin{proof}
	Let $X = g \otimes P_{ji}$, $i,j\ge 1$. If we take Fourier inverse transform, then we get
		\begin{align*}
		f(z,t) & = \F^{-1}(X)(z,t) = \int_{\Real^*} g(h) tr(\pi_h(z,t) P_{ji})d\mu(h)\\
		& = \int_{\Real^*} g(h)\la \pi_h(z,t)\xi_j, \xi_i \ra d\mu(h), \;\; (z,t)\in H_d.
		\end{align*}
	From the inversion theorem (\cite[theorem 1.3.2]{Tha}) we recover $X$ as the Fourier transform of $f$, so that we have
		\begin{align*}
		X(h') & = g(h')P_{ji}\\
		& = \int_{\Real^*} \! \int_{H_d} g(h)\la \pi_h(z,t)\xi_j, \xi_i \ra \overline{\pi_{h'}(z,t)}dzdtd\mu(h)
		\end{align*}
	for every $h'\in \Realzero$.
	Since $\la P_{ji} \xi_k, \xi_l\ra = \la \xi_k, \xi_i \ra \la \xi_j, \xi_l \ra$, we get the conclusion.
	\end{proof}

	Now let $W = w \otimes id_n$ for some strictly positive $w\in \s(\Real)$ and $n\ge 1$,
	where $id_n = \sum^n_{i=1} P_{ii}$, the $n\times n$ upper-left corner of $1_{B(\Hi)}$.
	We set
		$$f(z,t) = \F^{-1}(W)(z,t)
		= \int_{\Real^*} w(h)\chi^n_{\pi_h}(z,t) d\mu(h),\;\; (z,t)\in H_d,$$
	where
\begin{align}\label{Eq:Character-Heiesn-upper corner}
\chi^n_{\pi_h}(z,t) = \sum^n_{i=1}\la \pi_h(z,t)\xi_i, \xi_i \ra.
\end{align}
	Then we have $\Gamma(W) = \int_{H_d} f(z,t) \lambda(z,t)\otimes \lambda(z,t) dzdt$,
	and if we focus on a particular point $(h', h'')\in \Real^* \times \Real^*$, then by \eqref{rep} we have
	
	\begin{align*}
		\Gamma(W)(h',h'')
		& = \int_{H_d} f(z,t)\overline{\pi_{h'}(z,t)}\otimes \overline{\pi_{h''}(z,t)} dzdt\\
		& = \int_{\Real^*} \! \int_{H_d} w(h)\chi^n_{\pi_h}(z,t)
		\overline{\pi_{h'}(z,t)}\otimes \overline{\pi_{h''}(z,t)} dzdt
	\end{align*}
	for almost every $(h', h'')\in \Real^* \times \Real^*$.
	By the Stone-von Neumann theorem (\cite[Theorem 6.49]{Fol}) for $h'+h'' \neq 0$
	(note that the cases $h'+h'' =0$ are measure zero with respect to $\mu \times \mu$) we have
		$$\pi_{h'}\otimes \pi_{h''} \cong \bigoplus_\alpha \pi^\alpha_{h'+h''},$$	
	where $\pi^\alpha_{h'+h''}$ are copies of $\pi_{h'+h''}$. Thus, by \eqref{orthogonality}
and \eqref{Eq:Character-Heiesn-upper corner} we have
		\begin{align*}
		\Gamma(W)(h', h'')
		& = \bigoplus_\alpha \! \int_{\Real^*}\! \int_{H_d} w(h)\chi^n_{\pi_h}(z,t)
		\overline{\pi^\alpha_{h'+h''}(z,t)} dzdt d\mu(h) \\
		& = \bigoplus_\alpha w(h'+h'')id_n
		\end{align*}
	for almost every $(h', h'')\in \Real^* \times \Real^*$.
Note that the above equality can be extended to any $\om \in L^\infty(\R)$
since $\s(\R)$ is $w^*$-dense in $L^\infty(\R)$,
and we can replace $id_n$ by $1_{B(\Hi)}$ by $w^*$-$w^*$ continuity.
Thus, for any $\om \in L^\infty(\Real)$ and $W = \om \otimes 1_{B(\Hi)}$ we have
	\begin{equation}\label{co-multi2}
	\Gamma(W)(h', h'') = w(h'+h'')1_{B(\Hi)}\otimes 1_{B(\Hi)}
	\end{equation}
for almost every $(h', h'')\in \Real^* \times \Real^*$.
Note that we used the fact that $\bigoplus_\alpha 1_{B(\Hi)}$ is identified with $1_{B(\Hi)}\otimes 1_{B(\Hi)}$.
	
Now we consider a continuous positive function $\om$ on $\R$ which is bounded away from zero.
We would like to construct a central weight associated to $\om$.
For $m \in \N$, we consider the projection $E_m$ in $VN(H_d)$ given by
	$$E_m = 1_{[-m,m]}\otimes 1_{B(\Hi)}.$$
It is clear that $(E_m)_{m\ge 1}$ is an increasing net of projections in $VN(H_d)$
and $\D = \bigcup_{m\ge 1}E_m(H)$ is dense in $H = L^2(\mu;S_2(\Hi))$.
Let $W_m$ be the operator in $VN(H_d)$ given by
	$$W_m := (\om 1_{[-m,m]}) \otimes 1_{B(\Hi)}.$$
Consider a linear operator $W_0$ with the domain $\D$ defined by
	$$W_0(h) := W_m(h),\;\; h\in E_m(H).$$
If we apply the same argument as in \cite[Lemma 5.6.1]{KR83}, then we can show that
$W_0$ is closable with the self-adjoint closure. We will denote this closure by
	\begin{equation}\label{W-def-Heisenberg}
	W = \om \otimes 1_{B(\Hi)}.
	\end{equation}
Similar to the compact groups, we can exactly determine when $W$ defines
a weight on the dual of $G$.

\begin{thm}\label{T:central weight-Heisen}
Let $w : \R \rightarrow (\delta,\infty)$
be a continuous function, where $\delta>0$. The operator $W$ constructed in (\ref{W-def-Heisenberg})
defines a central weight on the dual of the Heisenberg group $H_d$ if and only if
\begin{equation}\label{Eq:weight-Heisenberg-central}
	w(h'+h'')\le w(h')w(h'')
	\end{equation}
	for every $h'$ and $h''$ in $\R$.
\end{thm}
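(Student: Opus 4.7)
The plan is to mirror, step by step, the strategy used for compact groups, with the Schur orthogonality computation replaced by the Stone--von Neumann decomposition (\ref{co-multi2}) already worked out just before the statement. First I would verify directly from the construction (\ref{W-def-Heisenberg}) that $W = w \otimes 1_{B(\Hi)}$ is a closed, densely defined positive operator on $H = L^2(\mu;S_2(\Hi))$ affiliated with $VN(H_d)$; the bounded inverse $W^{-1} = w^{-1}\otimes 1_{B(\Hi)} \in VN(H_d)$ exists precisely because $w\ge \delta>0$. The increasing net $E_m = 1_{[-m,m]}\otimes 1_{B(\Hi)}$ and $\D = \bigcup_m E_m(H)$ satisfy conditions (1) and (2) of Lemma \ref{extension}, and the same direct-integral argument that produced (\ref{co-multi2}) gives $\Gamma(E_m)(h',h'') = 1_{[-m,m]}(h'+h'')\cdot 1_{B(\Hi)}\otimes 1_{B(\Hi)}$ for $\mu\times\mu$-a.e.\ $(h',h'')\in \Realzero\times \Realzero$; these projections increase to $1_{VN(H_d\times H_d)}$, so $\D' = \bigcup_m \Gamma(E_m)(H\otimes H)$ is dense and $\Gamma(W)$ is defined via Definition \ref{Def-extension}.

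The core of the proof is then to plug (\ref{co-multi2}) directly into the expression $\Gamma(W)(W^{-1}\otimes W^{-1})$. Writing $W^{-1}\otimes W^{-1}$ in its direct-integral form
\[
(W^{-1}\otimes W^{-1})(h',h'') = w(h')^{-1}w(h'')^{-1}\cdot 1_{B(\Hi)}\otimes 1_{B(\Hi)},
\]
and combining with (\ref{co-multi2}) fibrewise yields
\[
\Gamma(W)(W^{-1}\otimes W^{-1})(h',h'') = \frac{w(h'+h'')}{w(h')w(h'')}\cdot 1_{B(\Hi)}\otimes 1_{B(\Hi)}
\]
for $\mu\times\mu$-a.e.\ $(h',h'')\in \Realzero\times\Realzero$. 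Because this is a scalar-valued multiplication operator tensored with the identity, its operator norm equals the essential supremum of the ratio $w(h'+h'')/(w(h')w(h''))$. Thus condition (3) of Definition \ref{Def-weight} follows automatically from continuity and boundedness-below of $w$, and condition (4) $\Gamma(W)(W^{-1}\otimes W^{-1}) \le 1_{VN(H_d\times H_d)}$ is equivalent to $w(h'+h'') \le w(h')w(h'')$ holding $\mu\times\mu$-a.e.\ on $\Realzero\times \Realzero$. Using continuity of $w$ and density of $\{(h',h'')\in \Realzero\times\Realzero : h'+h''\neq 0\}$ in $\R\times\R$, this in turn is equivalent to the submultiplicativity (\ref{Eq:weight-Heisenberg-central}) holding for \emph{every} pair $(h',h'')\in \R\times \R$.

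For the remaining items, centrality is immediate: each $WE_m = (w 1_{[-m,m]})\otimes 1_{B(\Hi)}$ lies in $L^\infty(\mu)\otimes \C\cdot 1_{B(\Hi)}$, which is contained in the centre of $L^\infty(\mu)\bar{\otimes}B(\Hi)\cong VN(H_d)$ and hence in $VN(H_d)'$. Given centrality, condition (5) of Definition \ref{Def-weight} is automatic by Remark \ref{R:weighted spaces}(2). The main obstacle I expect is rigorously translating the operator inequality on $H\otimes H$ into the pointwise inequality on $\R\times\R$: this requires carefully handling the measure-zero diagonal $\{h'+h''=0\}$ on which the Stone--von Neumann decomposition (\ref{co-multi2}) fails, and justifying that the essential supremum of the continuous ratio $w(h'+h'')/(w(h')w(h''))$ on the full-measure open set $\Realzero\times\Realzero\setminus\{h'+h''=0\}$ coincides with its pointwise supremum on all of $\R\times\R$. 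Both points are handled by continuity of $w$ together with the lower bound $w\ge\delta$.
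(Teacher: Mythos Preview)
Your proposal follows essentially the same route as the paper's proof: verify affiliation and bounded invertibility of $W$, compute $\Gamma(E_m)$ and $\Gamma(W)$ fibrewise via (\ref{co-multi2}), reduce condition~(4) of Definition~\ref{Def-weight} to the a.e.\ inequality on $\Realzero\times\Realzero$ and upgrade to all of $\R\times\R$ by continuity, and handle condition~(5) via centrality and Remark~\ref{R:weighted spaces}(2).

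Two small points are worth tightening. First, you do not verify condition~(2) of Definition~\ref{Def-weight} (density of $\D_0$); the paper does this by noting that $C_{00}(\Realzero\times\Realzero)\otimes S_2(\Hi\otimes\Hi)$ lies in both $\D'$ and $\D_0$ and is dense in $H\otimes_2 H$. Second, your assertion that condition~(3) ``follows automatically from continuity and boundedness-below of $w$'' is not correct as a general claim: for $w(h)=e^{h^2}$ one has $w\ge 1$ continuous, yet $w(h'+h'')/(w(h')w(h''))=e^{2h'h''}$ is unbounded. What is true, and sufficient for the iff, is that condition~(3) follows from condition~(4) (the ratio is $\le 1$ once submultiplicativity holds), so in the forward direction (3) comes along with (4), and in the backward direction you are assuming $W$ is a weight so (3) and (4) are given.
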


\begin{proof}
Following the construction of $W$, it is routine to verify that $W$ is a closed densely
defined positive operator on $H_d$ affiliated to $VN(H_d)$. Also
$W$ has the bounded inverse
$$W^{-1} = \om^{-1} \otimes 1_{B(\Hi)},$$
since $\om$ is bounded away from zero.
Moreover, \eqref{co-multi2} implies that
	$$\Gamma(E_m)(h',h'')	= 1_{[-m,m]}(h'+h'')1_{B(\Hi)} \otimes 1_{B(\Hi)}$$
for almost every $(h', h'')\in \Real^* \times \Real^*$.
Thus it is clear that
	$$\D' = \bigcup_{m\ge 1}\Gamma(E_m)(H\otimes_2 H)$$
is dense in $H\otimes_2 H$,
so that we can apply Lemma \ref{extension} to define $\Gamma(W)$. Note that
	$$\Gamma(W)(h',h'') = 	1_{[-m,m]}(h'+h'')w(h'+h'')1_{B(\Hi)} \otimes 1_{B(\Hi)}$$
on $\Gamma(E_m)(H\otimes_2 H)$, for every $m \ge 1$ and almost every $(h', h'')\in \Real^* \times \Real^*$.
On the other hand,
	$$(W^{-1} \otimes W^{-1})(h',h'') = w^{-1}(h')w^{-1}(h'') 1_{B(\Hi)} \otimes 1_{B(\Hi)}$$
for every $(h', h'')\in \Real^* \times \Real^*$.
Then clearly $\D'$ and $\D_0 = \{x\in H\otimes_2 H : (W^{-1}\otimes W^{-1})x \in \D'\}$ both contain $C_{00}(\R^* \times \R^*) \otimes S_2(\Hi \otimes \Hi)$,
where $C_{00}(\R^* \times \R^*)$ refers to the space of continuous functions on $\Real^*\times \R^*$ with compact support.
Moreover, $W^{-1} \otimes W^{-1}$ preserves $C_{00}(\R^* \times \R^*) \otimes S_2(\Hi \otimes \Hi)$,
and since $C_{00}(\R^* \times \R^*) \otimes S_2(\Hi \otimes \Hi)$ is dense in $H\otimes_2 H$, the condition (2) of Definition \ref{Def-weight} is satisfied.
Moreover, the condition (4) of Definition \ref{Def-weight} is equal to
	\begin{equation*}
	w(h'+h'')\le w(h')w(h'')
	\end{equation*}
	for almost every $h'$ and $h''$ in $\R ^*$ which is equivalent to the relation
(\ref{Eq:weight-Heisenberg-central}) since $w$ is continuous.
Finally, it is clear that $W_m\in VN(G)'$
for every $m\in \N$, and so, by Remark \ref{R:weighted spaces}(2),
the condition (5) of Definition \ref{Def-weight} is satisfied.
Consequently, $W = \om \otimes 1_{B(\Hi)}$ defines a weight on the dual of $H_d$
if and only if (\ref{Eq:weight-Heisenberg-central}) holds.
\end{proof}

The preceding theorem is the motivation behind the following definition.

\begin{defn}\label{D:weight-Heisenberg-central}
Let $W = \om \otimes 1_{B(\Hi)}$ be a central weight on the dual of $H_d$
for a continuous function $\om : \Real \rightarrow (\delta,\infty)$ $(\delta>0$) satisfying \eqref{Eq:weight-Heisenberg-central}.
For convenience, we use $\om$ to represent $W$, $A(H_d,\om)$ to represent $A(H_d,W)$,
and use the terminology that $\om$ is a {\bf central weight} on $\widehat{H_d}$.
\end{defn}

\begin{exm}\label{E:weight-Heisenberg}
Let $a \geq 0$. We define the function $\tau_a : \Real \rightarrow [1,\infty)$ by
	\begin{align}\label{Eq:weight-Heisenberg-dim}
	\tau_a(x)= (1+\abs{x})^a \ \ \ (x\in \Real).
	\end{align}
By Theorem \ref{T:central weight-Heisen} and Definition \ref{D:weight-Heisenberg-central},
$\tau_a$ is a central weight on $\widehat{H_d}$.
\end{exm}

\section{Compact groups}\label{S:BF alg-compact}

Throughout this section, $G$ is always assumed to be a compact group.
We start with showing certain functorial property that holds for the Beurling-Fourier
algebras.

\subsection{Functorial Property}\label{S:Functorial Property}

Consider the map $Q :A(G\times G) \to
C(G)$ defined by
\[
Qw(s)=\int_G w(sr,r)dr.
\]

We denote the image of $Q$ by $A_\Del(G)$. We endow
$A_\Delta(G)$ with the operator space structure which makes $Q$ a complete quotient map.
We also note that
\[
N:A_\Delta(G) \rightarrow A(G\cross G), \quad
Nu(s,t)=u(st^{-1})
\]
is a complete isometry. As in \cite[Theorem 2.6]{FSS1}, if
we repeat the procedure above we obtain
\begin{equation}\label{E:A-DEl-formula}
A_{\Del^{n+1}}(G)=Q(A_{\Del^n}(G\times G)) \ \ \ (n\in \N).
\end{equation}
We can do a similar construction with
\[
\check{Q}:A(G\times G) \rightarrow C(G) , \quad
\check{Q} w(s)=\int_G w(st,t^{-1})dt.
\]
We denote the image of $\check{Q}$ by $A_\gamma(G)$. We endow
$A_\gamma(G)$ with the operator space structure which makes $\check{Q}$ a
complete quotient map.  We also note that
\[
\check{N}:A_\gamma(G) \rightarrow A(G\cross G:\check{\Del}) ,
\quad \check{N}u(s,t)=u(st)
\]
is a complete isometry.  If
we repeat the procedure above we obtain
\begin{equation}\label{Eq:A-DEl-formula}
A_{\gamma^{n+1}}(G)=\check{Q}(A_{\gamma^n}(G\times G)) \ \ \ (n\in \N\cup \{0\}).
\end{equation}
It follows immediately that, for each $n\in \N$, $A_{\gamma^n}(G)$ is a
closed unital subalgebra of the Fourier algebra $A(G^{(2n)})$. Moreover, by \cite[Theorem 4.1]{FSS1},
\begin{equation}\label{Eq:norm-gamma}
\|f\|_{A_{\gamma^n}(G)} = \sum_{\pi \in \G} d_\pi^{2^{n}+1} \norm{ \widehat{f}(\pi)}_1.
\end{equation}

Let $\om$ be a central weight on $\widehat{G}$, and let $\Om$
be the symmetrization of $\om$ (Definition \ref{D:weight-non abelian-central}).
Since  $A(G\times G, \om \times \om)$ is a subalgebra of $A(G\times G)$, we can restrict
the map $Q$ to  $A(G\times G, \om \times \om)$. We denote
$$A_\Del(G, \Om)=Q(A(G\times G, \om \times \om))$$
and endow $A_\Del(G, \Om)$ with the operator space structure so that
it became a complete quotient of $A(G\times G, \om \times \om)$. It is clear that
$A_\Del(G, \Om)$ is a completely contractive Banach algebra.
Moreover
\[
N:A_\Delta(G, \Om) \rightarrow A(G\cross G, \om \times \om), \quad
Nu(s,t)=u(st^{-1})
\]
induces a completely isometric algebraic monomorphism from
$A_\Del(G, \Om)$ into  $A(G\times G, \om \times \om)$.

The following theorem explains the motivation behind using $\Om$
in the preceding definition.

\begin{thm}
Let $G$ be a compact group, and let $\om$ be a central weight on $\G$. Then
	$$A_\Del(G, \Om)
	= \{f\in C(G) : \sum_{\pi \in \G} d_\pi^{3/2}  \Om(\pi) \norm{\widehat{f}(\pi)}_2< \infty \} .$$
Moreover, for every $f\in A_\Del(G, \Om)$, we have:
\begin{eqnarray*}
\|f\|_{A_\Del(G, \Om)} &=& \inf \{\|u\|_{\om \times \om} : u\in A(G\times G, \om \times \om), Q u=f \}
\\ &=& \sum_{\pi \in \G} d_\pi^{3/2}  \Om(\pi) \| \widehat{f}(\pi) \|_2.
\end{eqnarray*}
\end{thm}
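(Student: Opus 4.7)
The plan is to transport the infimum $\inf\{\|w\|_{\om\times\om}: Qw=f\}$ to the Fourier side and reduce it to an independent rank-one trace-norm optimization on each $\pi\in\G$. Using \eqref{Eq:Inverse Fourier trans-compact-trig poly} and Schur orthogonality, I will first compute that $\int_G \pi(r)\otimes \pi'(r)\,dr$ vanishes unless $\pi'\cong \overline\pi$ and, in that case, equals $\tfrac{1}{d_\pi}|\xi_\pi\rangle\langle \xi_\pi|$, where $\xi_\pi := \sum_i e_i\otimes \bar e_i \in H_\pi\otimes H_{\overline\pi}$ is the canonical $G$-invariant vector. Substituting into $Qw(s) = \int_G w(sr,r)\,dr$ for a trigonometric polynomial $w$ gives
\[
Qw(s) \;=\; \sum_{\pi\in\G} d_\pi\,\bigl\langle \xi_\pi,\, \hat w(\pi,\overline\pi)(\pi(s)\otimes I)\,\xi_\pi\bigr\rangle,
\]
so $Qw$ depends only on the ``conjugate-pair'' components $\hat w(\pi,\overline\pi)$, and the constraint $Qw=f$ decouples into $T_\pi(\hat w(\pi,\overline\pi)) = \hat f(\pi)$ for each $\pi$, where $T_\pi: M_{d_\pi}\otimes M_{d_\pi}\to M_{d_\pi}$ is defined by $T_\pi(A)_{ia} := \langle \xi_\pi, A(e_a\otimes \bar e_i)\rangle$.

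Next, by Remark \ref{rem-central}(1) and (3) the norm on $A(G\times G, \om\times\om)$ factors as $\|w\|_{\om\times\om} = \sum_{\pi,\pi'} d_\pi d_{\pi'}\om(\pi)\om(\pi')\|\hat w(\pi,\pi')\|_1$, so zeroing out the non-conjugate components preserves $Qw$ but cannot increase the norm. The infimum is therefore achieved over $w$ supported on the pairs $(\pi,\overline\pi)$, on which
\[
\|w\|_{\om\times\om} \;=\; \sum_{\pi\in\G} d_\pi^2\,\Om(\pi)\,\|\hat w(\pi,\overline\pi)\|_1,
\]
and the problem splits into the independent sectorwise optimizations $\nu_\pi(C) := \inf\{\|A\|_1 : A\in M_{d_\pi}\otimes M_{d_\pi},\ T_\pi(A)=C\}$ with $C = \hat f(\pi)$.

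To solve each sectorwise problem I will observe that the adjoint is the rank-one map $T_\pi^*(D) = |v_D\rangle\langle \xi_\pi|$ with $v_D := \sum_{a,i} D_{ai}\,e_a\otimes \bar e_i$, so that $\|T_\pi^*(D)\|_\infty = \sqrt{d_\pi}\,\|D\|_2$. Schatten duality then gives $|\operatorname{tr}(DC)| = |\operatorname{tr}(T_\pi^*(D)A)| \le \sqrt{d_\pi}\,\|D\|_2\,\|A\|_1$ for every $A$ with $T_\pi(A)=C$, and choosing $D := \overline{C^T}$ yields the lower bound $\nu_\pi(C)\ge \|C\|_2/\sqrt{d_\pi}$; the matching upper bound is realized by the explicit rank-one element $A_\star := \tfrac{1}{d_\pi}|\xi_\pi\rangle\langle v_{\overline{C^T}}|$, which satisfies $T_\pi(A_\star)=C$ and $\|A_\star\|_1 = \|C\|_2/\sqrt{d_\pi}$. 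Summing over $\pi$,
\[
\inf_{Qw=f}\|w\|_{\om\times\om} \;=\; \sum_{\pi\in\G} d_\pi^2\,\Om(\pi)\,\frac{\|\hat f(\pi)\|_2}{\sqrt{d_\pi}} \;=\; \sum_{\pi\in\G} d_\pi^{3/2}\,\Om(\pi)\,\|\hat f(\pi)\|_2,
\]
which simultaneously delivers both displayed equalities and the characterization of $A_\Delta(G,\Om)$; the ``if'' direction follows by assembling the sectorwise optima $A_\star$ into a genuine element of $A(G\times G,\om\times\om)$, which converges in that Beurling--Fourier algebra precisely when the above series is finite. The main obstacle is the first step: the Fourier-side bookkeeping that simultaneously extracts $\xi_\pi$ from the invariant subspace of $\pi\otimes \overline\pi$ and identifies $T_\pi$ together with its rank-one adjoint, after which the trace-norm/Hilbert--Schmidt duality in the sectorwise optimization collapses cleanly.
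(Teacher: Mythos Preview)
Your argument is correct but proceeds along a genuinely different route from the paper's. The paper observes that the section $N:A_\Delta(G,\Om)\to A(G\times G,\om\times\om)$, $Nu(s,t)=u(st^{-1})$, is a complete isometry (so that $\|f\|_{A_\Delta(G,\Om)}=\|Nf\|_{\om\times\om}$), and then computes $\|Nf\|_{\om\times\om}$ by invoking the Fourier-side calculation of \cite[Theorem~2.2]{FSS1}. You instead attack the quotient norm directly: you push $Q$ to the Fourier side, show the constraint decouples over $\pi\in\G$ into rank-one problems $T_\pi(A)=\widehat f(\pi)$, and solve each by Schatten duality, exhibiting both the lower bound $\|A\|_1\ge d_\pi^{-1/2}\|\widehat f(\pi)\|_2$ and an explicit rank-one minimizer $A_\star$. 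Your approach is longer but fully self-contained (no appeal to \cite{FSS1}) and constructive, in that it identifies the optimal preimage; in effect you are reproving, in the weighted setting, the content that the paper outsources. The paper's route is shorter once the isometry of $N$ is granted, but that fact itself ultimately rests on the same sectorwise optimality that you establish directly.
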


\begin{proof}
It suffices to show that, for $f\in C(G)$,
$$f\in A_\Del(G, \Om) \ \ \text{iff} \ \ \ \sum_{\pi \in \G} d_\pi^{3/2}  \Om(\pi) \| \widehat{f}(\pi) \|_2< \infty.$$
We note that $f \in A_\Del(G, \Om)$ if and only if $Nf \in A(G\times G, \om \times \om)$, in which
case $\|f\|_{A_\Del(G, \Om)}=\|Nf\|_{\om \times \om}.$ On the other hand,
following a similar argument as in the proof of Theorem 2.2 in \cite{FSS1},
we can compute $\|Nf \|_{\om\times \om}$,
which is $\sum_{\pi \in \G} d_\pi^{3/2}  \Om(\pi) \| \widehat{f}(\pi) \|_2$.
This completes the proof.
\end{proof}

We collect some notations for ideals which we will need in this section.
	\begin{defn}
	Let $E$ be a closed subset of $G$. We define
		$$E^* := \{(s,t)\in G\times G : st^{-1}\in E \}.$$
	For any central weight $\om$ on $\G$ we define the ideal $I_\om(E)$ to be
	the $\|\cdot \|_{A(G,\om)}$-closure of $\{f\in \fT(G) : f=0 \ \text{on}\ E\}$. Similarly,
		$$I_{\Del, \Om}(E)=\{f\in A_\Del(G, \Om) : f=0 \ \text{on}\  E\}$$
	and
		$$I_{\om \times \om}(E^*) = \{g\in A_\Del(G\times G, \om \times \om) : g=0 \ \text{on}\ E^*\}.$$
	\end{defn}

\begin{thm}\label{T:Proj-weight}
Let $G$ be a compact group, and let $\om$ be a central weight on $\G$. If $E$ is a closed
subset of $G$, then we have\\
$($i$)$ $Q I_{\om \times \om}(E^*)=I_{\Del, \Om}(E)$.\\
$($ii$)$ $I_{\om \times \om}(E^*)$ is the closed ideal generated by $N I_{\Del, \Om}(E)$.
\end{thm}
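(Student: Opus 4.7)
The proof strategy rests on the key identity $Q\circ N = \mathrm{id}_{A_\Del(G,\Omega)}$, which is verified directly by normalized Haar measure: for $f \in A_\Del(G,\Omega)$ and $s\in G$,
$$Q(Nf)(s) = \int_G Nf(sr,r)\,dr = \int_G f\bigl((sr)r^{-1}\bigr)\,dr = \int_G f(s)\,dr = f(s).$$
Combined with the fact that $N$ is a completely isometric algebra section of the complete quotient map $Q$ in the weighted setting (the same argument as in the unweighted case applies since $\omega\times\omega$ is the natural weight on $\widehat{G\times G}$ pulled back via the tensor structure), this identity will drive both parts.

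For part (i), the inclusion $Q I_{\omega\times\omega}(E^*) \subseteq I_{\Del,\Omega}(E)$ follows from a pointwise computation: for $g \in I_{\omega\times\omega}(E^*)$ and $s \in E$, the point $(sr,r)$ lies in $E^*$ for every $r\in G$ since $(sr)r^{-1} = s\in E$, hence $g(sr,r)=0$ and $Qg(s) = 0$. Conversely, given $f \in I_{\Del,\Omega}(E)$, one has $Nf(s,t)= f(st^{-1}) = 0$ whenever $(s,t) \in E^*$, so $Nf \in I_{\omega\times\omega}(E^*)$; together with $f = Q(Nf)$ this yields $f \in Q I_{\omega\times\omega}(E^*)$.

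For part (ii), let $J$ be the closed ideal of $A(G\times G,\omega\times\omega)$ generated by $NI_{\Del,\Omega}(E)$. The inclusion $J\subseteq I_{\omega\times\omega}(E^*)$ is immediate from part (i), since $NI_{\Del,\Omega}(E) \subseteq I_{\omega\times\omega}(E^*)$ and the latter is already a closed ideal. For the reverse inclusion, given $g \in I_{\omega\times\omega}(E^*)$, the decomposition $g = N(Qg) + (g - N(Qg))$ immediately handles the first summand, since $Qg \in I_{\Del,\Omega}(E)$ by part (i), so that $N(Qg) \in N I_{\Del,\Omega}(E)\subseteq J$. Thus the task reduces to showing that any $g_0 \in \ker(Q)\cap I_{\omega\times\omega}(E^*)$ lies in $J$.

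The main obstacle is establishing this residual claim; I would in fact bypass the decomposition entirely and prove $I_{\omega\times\omega}(E^*) \subseteq J$ directly by an approximate-identity argument. Since $G$ is compact, $1_G$ is the unit of $A_\Del(G,\Omega)$ and $N(1_G) = 1_{G\times G}$ is the unit of $A(G\times G,\omega\times\omega)$, so the strategy is to construct a net $(f_\alpha) \subseteq I_{\Del,\Omega}(E)$ with $f_\alpha \to 1_G$ locally off $E$ in a sufficiently strong sense that $g \cdot Nf_\alpha \to g$ in the $A(G\times G,\omega\times\omega)$-norm for every $g \in I_{\omega\times\omega}(E^*)$. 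Each product $g\cdot Nf_\alpha$ lies in $A(G\times G,\omega\times\omega)\cdot NI_{\Del,\Omega}(E) \subseteq J$, yielding $g\in J$. The delicate step is ensuring that such a net exists despite the weight: in the unweighted case this follows from operator amenability of $A(G)$ for compact $G$ (Ruan's theorem together with regularity), and in the weighted case I expect one can transfer an unweighted bounded approximate identity into the weighted setting using the continuous inclusion $j_\ast : A(G,W)\hookrightarrow A(G)$ and the fact that $W^{-1}$ is bounded, so that multiplication by $N f_\alpha$ extends from $A(G\times G)$ to $A(G\times G,\omega\times\omega)$ with uniformly controlled completely bounded norm.
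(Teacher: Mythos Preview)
Your argument for part (i) is correct and essentially self-contained. The paper, by contrast, does not give a direct proof of either part: it simply observes that since $\omega$ is bounded away from zero, $A(G\times G,\omega\times\omega)$ satisfies the hypotheses of \cite[Theorem~1.4]{FSS1}, and invokes that general result wholesale. So for (i) you have actually supplied more than the paper does.

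Part (ii), however, has a real gap. Your plan is to take an unweighted bounded approximate identity $(f_\alpha)$ for $I_\Delta(E)$, supplied by Ruan's theorem, and argue that multiplication by $Nf_\alpha$ is a uniformly completely bounded operator on $A(G\times G,\omega\times\omega)$. But $A(G\times G)$ does \emph{not} act as multipliers on $A(G\times G,\omega\times\omega)$ in general: take $g = 1_{G\times G} \in A(G\times G,\omega\times\omega)$, so that $h\cdot g = h$ for any $h\in A(G\times G)$, and $h$ need not lie in the weighted algebra at all. Concretely, on $\T$ with weight $\omega_a'(n) = (1+|n|)^a$, an element of $\ell^1(\Z)$ bounded in the unweighted norm can have arbitrarily large $\ell^1(\Z,\omega_a')$-norm, hence arbitrarily large multiplier norm on the weighted algebra. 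The boundedness of $W^{-1}$ gives you the inclusion $A(G,\omega)\hookrightarrow A(G)$ in one direction only; it does nothing to control multiplication by unweighted elements.

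Nor can you repair this by seeking a bounded approximate identity for $I_{\Delta,\Omega}(E)$ directly in the weighted algebra: by Theorem~\ref{T:Amen-weak Amen-Delta}(i) of this very paper, the existence of such a b.a.i.\ for $E=\{e\}$ is \emph{equivalent} to operator amenability of $A(G,\omega)$, which fails for many weights (Corollary~\ref{C:weight-Op amen-simple Lie group}). Since Theorem~\ref{T:Proj-weight}(ii) is asserted for \emph{all} central weights, any approach that implicitly constructs a weighted b.a.i.\ cannot succeed in this generality. The general machinery in \cite[Theorem~1.4]{FSS1} circumvents this by a different route, which is why the paper defers to it rather than arguing directly.
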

\begin{proof}
Since $\om$ is bounded away from zero, $A(G\times G, \om \times \om)$ satisfies the
assumption of \cite[Theorem 1.4]{FSS1}. Thus it is a special case of \cite[Theorem 1.4]{FSS1}.
\end{proof}

The following proposition is shown to hold in \cite[Theorem 3.7.13]{RS} when $G$ is abelian.
We prove it for the general case with a different method
and later use it to relate the properties of different Beurling-Fourier algebras together.
But first we need the following definition.

\begin{defn}\label{D:weight-Max}
For a closed subgroup $H$ of $G$, we say that $\pi \in \G$ is an {\bf extension} of
$\tau \in \widehat{H}$ if $\tau \in \supp\, \pi_{|_H}$, where $\pi_{|_H}$ is the
representation on $H$ obtained by restricting on $H$.
\end{defn}

\begin{prop}\label{P:weight-subgroup}
Let $\om$ be a central weight on $\G$, and let $H$ be a closed subgroup of $G$. Define
the function $\om_H : \hH \to (0,\infty)$ by
$$\om_H(\pi)=\inf \{\om(\widetilde{\pi}) \mid \widetilde{\pi} \ \text{is an extenstion of}\ \pi \}.$$
Then:\\
$(i)$ $\om_H$ is a central weight on $\hH$.\\
$(ii)$ The restriction map $R_H: \fT(G) \to \fT(H)$ extends to a complete quotint map from $A(G,\om)$ onto
$A(H,\om_H)$.
\end{prop}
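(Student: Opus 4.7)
The proposition has two parts. For (i), the positivity lower bound on $\om_H$ transfers directly from $\om$, so the substance is the submultiplicative inequality. Given $\sigma \in \supp(\pi \otimes \pi')$ with $\pi, \pi', \sigma \in \hH$ and $\eps > 0$, I would pick extensions $\widetilde\pi, \widetilde\pi' \in \G$ of $\pi, \pi'$ whose $\om$-values approach $\om_H(\pi), \om_H(\pi')$ within $\eps$. Since restriction commutes with tensor products, $(\widetilde\pi \otimes \widetilde\pi')_{|_H} = \widetilde\pi_{|_H} \otimes (\widetilde\pi')_{|_H}$ contains $\pi \otimes \pi'$, hence contains $\sigma$; so some irreducible summand $\widetilde\sigma \in \supp(\widetilde\pi \otimes \widetilde\pi')$ has $\widetilde\sigma_{|_H}$ containing $\sigma$, i.e.\ $\widetilde\sigma$ is an extension of $\sigma$. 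Condition \eqref{Eq:weight-central} for $\om$ gives $\om_H(\sigma) \le \om(\widetilde\sigma) \le \om(\widetilde\pi)\om(\widetilde\pi') < (\om_H(\pi)+\eps)(\om_H(\pi')+\eps)$, and letting $\eps \to 0$ finishes (i).

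For (ii), I would first prove the scalar contraction on trigonometric polynomials. For $f = \sum_{\pi \in F} d_\pi \text{tr}(\hat f(\pi)\pi(\cdot)) \in \fT(G)$, decompose each $\pi_{|_H} \cong \bigoplus_\tau (I_{m(\pi,\tau)} \otimes \tau)$ via a unitary $U_\pi$ placing $U_\pi \hat f(\pi) U_\pi^*$ into a block form indexed by $(\tau,k)$ with $k \le m(\pi,\tau)$. Matching terms in the Fourier inversion on $H$, one reads off $\widehat{R_H f}(\tau) = \sum_\pi \sum_k (d_\pi/d_\tau)\,[U_\pi \hat f(\pi) U_\pi^*]_{(\tau,k),(\tau,k)}$. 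Combining (a) $\om_H(\tau) \le \om(\pi)$ whenever $\tau \in \supp(\pi_{|_H})$ (from the definition of $\om_H$), and (b) the $\|\cdot\|_1$-contractivity of the conditional expectation onto block-diagonal matrices (yielding $\sum_{\tau,k} \|[U_\pi \hat f(\pi) U_\pi^*]_{(\tau,k),(\tau,k)}\|_1 \le \|\hat f(\pi)\|_1$) then gives $\|R_H f\|_{A(H,\om_H)} \le \|f\|_{A(G,\om)}$. Density of $\fT(G)$ in $A(G,\om)$, immediate from the $\ell^1$-over-$\G$ description of the weighted norm, extends this to all of $A(G,\om)$.

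For surjectivity with norm control, given $g = \sum_{\tau \in F'} d_\tau \text{tr}(A_\tau \tau(\cdot)) \in \fT(H)$ and $\eps > 0$, I would pick for each $\tau \in F'$ an extension $\widetilde\tau \in \G$ with $\om(\widetilde\tau) < \om_H(\tau) + \eps_\tau$ and an isometric embedding $V_\tau : H_\tau \hookrightarrow H_{\widetilde\tau}$ realizing $\tau$ as a subrepresentation of $\widetilde\tau_{|_H}$, so that $V_\tau^* \widetilde\tau(h) V_\tau = \tau(h)$ for $h \in H$. Then $f = \sum_\tau d_\tau \text{tr}(V_\tau A_\tau V_\tau^* \widetilde\tau(\cdot)) \in \fT(G)$ satisfies $R_H f = g$ by the intertwining, and the triangle inequality together with isometric invariance of $\|\cdot\|_1$ (handling any coincidences where one $\widetilde\tau_0$ serves several $\tau$'s) gives $\|f\|_{A(G,\om)} \le \sum_\tau d_\tau \om(\widetilde\tau) \|A_\tau\|_1 < \|g\|_{A(H,\om_H)} + \eps$ for appropriately small $\eps_\tau$. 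Density of $\fT(H)$ in $A(H,\om_H)$ together with the quotient mapping theorem then yields the scalar quotient property. The upgrade to complete quotient comes from repeating both arguments with $M_n$-valued Fourier coefficients: the operator space structure on $A(G,\om)$ inherited through $\Phi_*$ admits the matrix analogue $\|[\phi_{ij}]\|_{M_n(A(G,\om))} = \sum_\pi d_\pi \om(\pi) \|[\hat\phi_{ij}(\pi)]\|_{M_n(S^1_{d_\pi})}$, and both the block-diagonal contractivity and the intertwining identity persist $M_n$-linearly.

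The main technical hurdle is the scalar contraction: identifying $\widehat{R_H f}(\tau)$ correctly requires careful bookkeeping of the unitary diagonalizing $\pi_{|_H}$ and extraction of the right diagonal blocks, and the key analytic input is the trace-norm contractivity of the block-diagonal projection (a special case of the $\|\cdot\|_1$-contractivity of a trace-preserving conditional expectation). Once this estimate is secured, the surjective lift is essentially a direct unpacking of the definition of $\om_H$ as an infimum, and the matrix-valued upgrade is routine in view of the $\ell^1$-direct-sum description of the operator space structure.
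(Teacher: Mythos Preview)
Your argument for (i) is correct and coincides with the paper's: both pick arbitrary extensions $\widetilde\pi,\widetilde\pi'$ and locate an irreducible constituent $\widetilde\sigma$ of $\widetilde\pi\otimes\widetilde\pi'$ whose restriction to $H$ contains $\sigma$, then invoke \eqref{Eq:weight-central}.

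For (ii) at the scalar level your approach is correct but genuinely different from the paper's. The paper works on the dual side: it shows that the natural embedding $\iota:C^*_r(H)\to C^*_r(G)$ extends to a \emph{complete isometry} $C^*_r(H,\om_H^{-1})\hookrightarrow C^*_r(G,\om^{-1})$, by computing the $M_n$-norms directly on the $C^*$-side (where the $\ell^\infty$-direct-sum structure makes the supremum over $\pi$ transparent), and then takes the pre-adjoint. Your route---block-diagonal trace-norm contractivity for the bound, and an explicit approximate lift via $V_\tau A_\tau V_\tau^*$ for the quotient property---is a clean predual computation that makes the r\^ole of the infimum in $\om_H$ very visible.

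The gap is in your upgrade to the operator-space statement. The formula
\[
\|[\phi_{ij}]\|_{M_n(A(G,\om))}=\sum_{\pi}d_\pi\,\om(\pi)\,\|[\widehat\phi_{ij}(\pi)]\|_{M_n(S^1_{d_\pi})}
\]
is false. The operator-space $\ell^1$-direct sum does not have $M_n$-norm equal to the sum of the componentwise $M_n$-norms. Already for $G=\Z_2$ (so $A(G)\cong\ell^1_2$ as the predual of $\ell^\infty_2$), the element $\text{diag}(e_1,e_2)\in M_2(\ell^1_2)$ corresponds to the $*$-homomorphism $\ell^\infty_2\to M_2$, $(a,b)\mapsto\text{diag}(a,b)$, which has cb-norm $1$; your formula would give $\|E_{11}\|_{M_2}+\|E_{22}\|_{M_2}=2$. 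Consequently neither your $M_n$-contractivity estimate nor your $M_n$-lift goes through as written. The paper's dual-side computation avoids this precisely because the $C^*$-algebra side \emph{is} an $\ell^\infty$-direct sum, so $M_n(C^*_r(G,\om^{-1}))$ really does carry the supremum norm over $\pi$, and a complete isometry there dualizes to the complete quotient you want. If you wish to keep your predual approach, you would need to replace the incorrect norm formula by a genuine cb-estimate (e.g.\ by exhibiting a completely contractive right inverse on each $M_n$-level), which in effect reduces to the paper's argument.
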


\begin{proof}
(i) By \cite[27.46]{HR2}, $\om_H$ is well-defined. We will show that $\om_H$ is a weight
on $\hH$. Let $\pi, \rho \in \hH$ and $\tau\in \hH$ such that $\tau \in \supp\, (\pi\otimes \rho)$.
We want to prove that
$$\om_H(\tau) \leq \om_H(\pi)\om_H(\rho),$$
or equivalently,
\begin{equation}\label{Eq:tensor-restriction}
\om_H(\tau) \leq \om(\widetilde{\pi})\om(\widetilde{\rho}),
\end{equation}
for every extension $\widetilde{\pi}$ and $\widetilde{\rho}$ of $\pi$ and $\rho$, respectively.
Let
\begin{equation}\label{Eq:tensor-tilda}
\widetilde{\pi}\otimes \widetilde{\rho}=\bigoplus_{i=1}^n \sg_i,
\end{equation}
where $\sg_i\in \G$ (note that $\sg_i$'s may not be all distinct). We claim that, for some $i_0$, $\sg_{i_0}$ is an extenstion
of $\tau$. To see this, first note that
$$\widetilde{\pi}_{|_H}=\bigoplus_{j=1}^m \pi_j  \ , \  \widetilde{\rho}_{|_H}=\bigoplus_{k=1}^p \rho_k,$$
with $\pi_i, \rho_i \in \hH$ with $\pi_1=\pi$, and $\rho_1=\rho$. Thus
$$(\widetilde{\pi} \otimes \widetilde{\rho})_{|_H}=\Big( \pi \otimes \rho\Big)
\oplus \Big(\bigoplus_{j+k>2}^{m+p} \pi_j \otimes \rho_k\Big).$$
Since, by our assumption, $\tau \in \supp\, (\pi \otimes \rho)$, $\tau$ appears in the decomposition of
$(\widetilde{\pi} \otimes \widetilde{\rho})_{|_H}$ into the irreducible elements of $\hH$. Hence,
by (\ref{Eq:tensor-tilda}), $\tau$ appears in the decomposition of $\bigoplus_{i=1}^n {\sg_i}_{|_H}.$
Therefore, by Schur orthogonality relation, for some $i_0$, $\sg_{i_0}$ is an extension
of $\tau$. Thus
$$\om_H(\tau)\leq \om(\sg_{i_0}) \leq \om(\widetilde{\pi})\om(\widetilde{\rho}),$$
which proves (\ref{Eq:tensor-restriction}). This completes the proof.\\
(ii) Let $\iota : C^*_r(H) \to C^*_r(G)$ be the $*$-isomorphism defined by
$$\iota(L_f)=\int_H f(h) \lambda_G(h) dh, \;\; f\in L^1(H),$$
where $L_f$ is the convolution operator by $f$ on $L^2(H)$.
It is easy to check that $R_H|_{\fT(H)} = \iota^*|_{\fT(H)}$,
so that it suffices to show that $\iota$ extends to a complete isometry from $C^*_r(H,\om_H^{-1})$
into $C^*_r(G,\om^{-1})$.

Now we fix $n\ge 1$ and consider a finite sequence of distinct representations
$(\sigma_k)^N_{k=1}\subseteq \widehat{H}$.
Let $A$ be an element in $M_n(VN(H, \om^{-1}_H))$ supported on $(\sigma_k)^N_{k=1}$, i.e.
	$$A = \sum^n_{i,j=1} e_{ij}\otimes \Big[ \bigoplus^N_{k=1} A^k_{ij} \Big],$$
where $A^k_{ij} \in M_{d_{\sigma_k}}$. From (\ref{Eq:Fourier trans-compact-trig poly}), (\ref{Eq:Inverse Fourier trans-compact-trig poly}), and (\ref{Eq:Fourier trans-compact-VN(G)}) it follows that
for every $1\leq i,j \leq n$, there is $f_{ij}\in L^1(H)$ such that
$$\F(L_{f_{ij}})=\bigoplus^N_{k=1} A^k_{ij}.$$
Moreover, for every $h\in H$,
	$$f_{ij}(h) = \sum^N_{k=1}d_{\sigma_k}\text{tr}(A^k_{ij}\sigma_k(h)).$$
Hence we have
\begin{align*}
		\iota_n([L_{f_{ij}}]) & = \Big [\int_H f_{ij}(h)\lambda_G(h)dh \Big ]\\
		& = \sum^n_{i,j=1} e_{ij}\otimes
		\Big[ \bigoplus_{\pi\in \widehat{G}}\int_H
		\sum^N_{k=1}d_{\sigma_k}\text{tr}(A^k_{ij}\sigma_k(h)) \overline{\pi(h)}dh \Big].
	\end{align*}
The integrals in the above formula are zero unless there is some $\sigma_k \in$ supp$\,\pi$, equivalently,
$\pi$ extends some $\sigma_k$. Thus
	\begin{align*}
		\lefteqn{\norm{\iota_n([L_{f_{ij}}])}}\\
		& = \sup_{\pi\in\widehat{G}}\Big\{ \frac{1}{\om(\pi)}
		\norm{\sum^n_{i,j=1} e_{ij}\otimes \int_H \sum^N_{k=1}d_{\sigma_k}\text{tr}(A^k_{ij}\sigma_k(h))
		\overline{\pi(h)}dh}_{M_n(M_{d_\pi})} \Big\}\\
		& = \sup_{\sigma_k\in\text{supp}\, \pi}\Big\{ \frac{1}{\om(\pi)}
		\norm{\sum^n_{i,j=1} e_{ij}\otimes \int_H \sum^N_{k=1}d_{\sigma_k}\text{tr}(A^k_{ij}\sigma_k(h))
		\overline{\sigma_k(h)}dh}_{M_n(M_{d_\pi})} \Big\}\\
		& = \max_{1\le k\le N}
		\frac{1}{\inf\{\om(\pi):\sigma_k\in\text{supp}\, \pi\}}
		\norm{\sum^n_{i,j=1} e_{ij}\otimes B^k_{ij}}_{M_n(M_{d_\pi})}\\
		& = \max_{1\le k\le N}
		\frac{\norm{\sum^n_{i,j=1} e_{ij}\otimes B^k_{ij}}_{M_n(M_{d_\pi})}}
		{\om^{-1}_H(\sigma_k)}\\
		& = \norm{L_f}_{M_n(C^*_r(H,\om^{-1}_H))},
	\end{align*}
where $B^k_{ij} = \int_H \sum^N_{k=1}d_{\sigma_k}\text{tr}(A^k_{ij}\sigma_k(h)) \overline{\sigma_k(h)}dh$.
By a standard density argument $\iota$ extends to a complete isometry from $C^*_r(H,\om_H^{-1})$
into $C^*_r(G,\om^{-1})$.

\end{proof}

We note that every (infinite) compact group contains (infinite) abelian subgroups \cite{Z}.
Thus by the preceding proposition, every Beurling-Fourier algebra has certain classical Beurling algebras as complete quotients.
This can be very useful particularly when the abelian subgroups can be chosen so that
they contain various information about the original compact group.
This happens, for example, in the case where $G$ is a compact Lie group and $H$ is any maximal torus of $G$.
We will show in details in Section \ref{S:weight-SU(2)}
how this idea can be applied to relate properties of Beurling-Fourier algebra on $SU(2)$ and Beurling algebras on $\T$.

\subsection{Operator Amenability}\label{S:Operator amen-BF alg}

In this section, we present certain criteria for investigating the
operator amenability of $A(G,\om)$. We will later show that this criteria can be applied to
large classes of weights. But first, we need to recall the following terminologies:

We recall that a completely contractive Banach algebra $A$ is $K$-operator amenable
if there is a virtual diagonal $M\in (A\widehat{\otimes} A)^{**}$ such that $\|M\|=K$.
The {\it operator amenability constant} of $A$ is the smallest $K$ such that $A$
is $K$-operator amenable.  We also recall that if $A$ is
a Banach algebra of continuous functions on a locally compact
space $X$, then for every $x\in X$, a functional $d\in A^*$ is called a {\it point derivation}
at $x$ if
$$d(ab)=a(x)d(b)+b(x)d(a) \ \ (a,b\in A).$$

\begin{thm}\label{T:Amen-weak Amen-Delta}
Let $G$ be a compact group with the identity $e$, and let $\om$ be a central weight on $\G$. Then:\\
$($i$)$ $A(G,\om)$ is operator amenable if and only if $I_{\Del, \Om}(\{e\})$
has a bounded approximate identity.\\
$($ii$)$ $A(G,\om)$ is $K$-operator amenable if and only if there is $F\in A_\Del(G,\Om)^{**}$
such that $\|F\|=K$, $\la F, \delta_e \ra=1$, and
$$ f\cdot F=f(e)F \ \ (f\in A_\Del(G,\Om)).$$
$($iii$)$ $A(G,\om)$ is operator weakly amenable if and only if $\overline{I_{\Del, \Om}(\{e\})^2}=
I_{\Del, \Om}(\{e\})$ is essential,
or equivalently, there is no non-zero continuous point derivation on $A_\Del(G,\Om)$ at
$e$.
\end{thm}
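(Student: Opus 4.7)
The plan is to reduce all three equivalences to conditions on the smaller algebra $A_\Del(G,\Om)$ by systematic use of the pair $(N,Q)$ together with Theorem \ref{T:Proj-weight}, following the template set in \cite{FSS1} for the unweighted case. Three preliminary observations drive the argument. First, since $G$ is compact, the constant function $1$ lies in $A(G,\om)$ (its only non-vanishing Fourier component sits at the trivial representation), so $A(G,\om)$ is unital; the same is true of $A_\Del(G,\Om)$. Second, by the Remark after Definition \ref{D:weight-Beurling Fourier alg} the multiplication on $A(G,\om)$ is pointwise; via the complete isometry $A(G,\om)\widehat{\otimes}A(G,\om)\cong A(G\times G,\om\times\om)$ of Remark \ref{R:weighted spaces}(4), the multiplication map $m$ becomes restriction to the diagonal, so $\ker m = I_{\om\times\om}(\{e\}^*)$. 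Third, a direct computation gives $Q\circ N = \mathrm{id}$ on $A_\Del(G,\Om)$, shows $N$ is an algebra homomorphism, and yields the key identity
\[
Q(Nu\cdot v) = u\cdot Qv, \qquad u\in A_\Del(G,\Om),\ v\in A(G\times G,\om\times\om).
\]

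For (i), I would invoke the standard characterization that a unital completely contractive Banach algebra is operator amenable iff $\ker m$ has a bounded approximate identity, then transfer through $(N,Q)$. A BAI $(u_\alpha)$ in $I_{\Del,\Om}(\{e\})$ lifts to $(Nu_\alpha)$, which is bounded in $I_{\om\times\om}(\{e\}^*)$; using Theorem \ref{T:Proj-weight}(ii) to approximate an arbitrary element of $I_{\om\times\om}(\{e\}^*)$ by finite sums $w\cdot Nu_i$, one verifies $(Nu_\alpha)$ is itself a BAI. Conversely, a BAI $(v_\alpha)$ in $I_{\om\times\om}(\{e\}^*)$ projects via $Q$ into $I_{\Del,\Om}(\{e\}) = Q(I_{\om\times\om}(\{e\}^*))$ by Theorem \ref{T:Proj-weight}(i), and $Q(Nu\cdot v_\alpha) = u\cdot Qv_\alpha$ combined with $QNu = u$ gives $u\cdot Qv_\alpha \to u$ for every $u\in I_{\Del,\Om}(\{e\})$.

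For (ii) I would set up the translation at the level of second duals. Given a virtual diagonal $M\in A(G\times G,\om\times\om)^{**}$ of norm $K$, define $F := Q^{**}(M) \in A_\Del(G,\Om)^{**}$; since $Q$ is a complete quotient $\|F\|\le K$, while $\la F, \delta_e\ra = \la M, Q^*(\delta_e)\ra$ equals the normalized Haar integral of $m^{**}(M) = 1$, namely $1$, and the identity $Q(Nf\cdot v) = f\cdot Qv$ upgrades to $f\cdot F = f(e)F$ using the virtual diagonal bimodule and identity properties. For the converse I would lift $F$ to $M := N^{**}(F)$, which has norm $K$ because $N^{**}$ is a complete isometry, and verify the virtual diagonal conditions by approximating $F$ in the weak-$*$ topology by elements of $A_\Del(G,\Om)$ that become concentrated near $e$ (as forced by the relation $f\cdot F = f(e)F$).

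For (iii) I would use the standard fact that a unital commutative completely contractive Banach algebra $B$ is operator weakly amenable iff $\overline{(\ker m_B)^2} = \ker m_B$, and apply the $(N,Q)$-correspondence from (i) to squares of ideals to obtain the equivalent condition $\overline{I_{\Del,\Om}(\{e\})^2} = I_{\Del,\Om}(\{e\})$. The final equivalence with absence of non-zero continuous point derivations at $e$ on $A_\Del(G,\Om)$ is a routine Hahn--Banach argument using $A_\Del(G,\Om) = \C\cdot 1 \oplus I_{\Del,\Om}(\{e\})$: a continuous point derivation at $e$ corresponds bijectively to a continuous functional on $I_{\Del,\Om}(\{e\})$ vanishing on $I_{\Del,\Om}(\{e\})^2$. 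The main technical obstacle will be the converse direction of (ii): verifying the full $A(G,\om)$-bimodule identity $(a\otimes 1)\cdot N^{**}(F) = N^{**}(F)\cdot (1\otimes a)$ for all $a\in A(G,\om)$ starting only from the one-sided $A_\Del(G,\Om)$-module relation $f\cdot F = f(e)F$. Morally this is true because the range of $N$ consists of functions of $st^{-1}$, on which the two one-sided actions of $A(G,\om)$ coincide near the diagonal where $F$ is concentrated, but turning this heuristic into a clean weak-$*$ approximation argument is the delicate point.
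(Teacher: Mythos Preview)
Your approach is essentially the same as the paper's: identify $\ker m$ with $I_{\om\times\om}(\{e\}^*)$, invoke the standard operator (weak) amenability criteria for unital completely contractive Banach algebras, and transfer through the pair $(N,Q)$ via Theorem \ref{T:Proj-weight}; for (ii) the paper also defines $F=Q^{**}(M)$ in one direction and $M=N^{**}(F)$ in the other.

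Regarding the ``delicate point'' you flag in the converse of (ii), the resolution does not require a weak-$*$ approximation by functions concentrated near $e$; it comes directly from Theorem \ref{T:Proj-weight}(ii). For $a\in A(G,\om)$ the element $a\otimes 1-1\otimes a$ lies in $I_{\om\times\om}(\{e\}^*)$, which by Theorem \ref{T:Proj-weight}(ii) is the closed ideal generated by $N\,I_{\Del,\Om}(\{e\})$. Since $N$ is a homomorphism, for $u\in I_{\Del,\Om}(\{e\})$ one has $Nu\cdot N^{**}(F)=N^{**}(u\cdot F)=u(e)\,N^{**}(F)=0$, hence $(w\cdot Nu)\cdot M=0$ for all $w$, and by norm approximation $(a\otimes 1-1\otimes a)\cdot M=0$. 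The identity $m^{**}(M)=1$ follows from $m\circ N(u)=u(e)\cdot 1$ and $\la F,\delta_e\ra=1$. So your worry is legitimate but dissolves once you use Theorem \ref{T:Proj-weight}(ii) rather than a heuristic concentration argument.
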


\begin{proof}
(i) and (iii). If we let $m: A(G,\om) \widehat{\otimes} A(G,\om) \to A(G,\om)$ be the multiplication
map, then it is easy to verify that
$$I_{\om \times \om}(\Del)=\ker m.$$
Thus following the arguments in \cite{Ruan} and \cite{Sp}, we see that
$A(G,\om)$ is operator amenable (respectively, operator
weakly amenable) if and only if $I_{\om \times \om}( \Del )$ has a bounded approximate identity
(respectively, $\overline{I_{\om \times \om}( \Del )^2}=I_{\om \times \om}( \Del )$).
Thus the results follows from Theorem \ref{T:Proj-weight} and the fact that $\{e\}^*=\Del$. \\
(ii) Let $A(G,\om)$ be $K$-operator amenable, and let $M$ be a virtual
diagonal for $A(G,\om)$ with $\|M\|=K$. Let $Q^{**}$ be the second adjoint
of \linebreak $Q : A(G\times G, \om \times \om) \to A_\Del(G,\Om)$ defined
in Section \ref{S:Functorial Property}, and let $F=Q^{**}(M)$.
Then it is routine to verify that $F$ holds the required properties of (ii). Conversely,
if such an $F\in A_\Del(G,\Om)^{**}$ exits, then $M=N^{**}(F)$ is a virtual diagonal
for $A(G,\om)$ with $\|M\|=\|F\|=K$.
\end{proof}

In \cite[Theorem 4.1]{J1}, B. E. Johnson computed the amenability
constant of the Fourier algebra of a finite group. The following
theorem is the quantization of Johnson's result to Beurling-Fourier algebras
on a finite group and its proof is inspired by that of Johnson's.

\begin{thm}\label{T:weight-amen-finite group}
Let $G$ be a finite group, and let $\om$ be a central weight on $\G$. Then $A(G,\om)$ is
operator amenable with the operator amenability constant $\displaystyle \frac{\sum_{\pi\in \G} d_\pi^2\Om(\pi)}{\sum_{\pi\in \G} d^2_\pi}$.
\end{thm}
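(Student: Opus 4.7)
The plan is to apply Theorem \ref{T:Amen-weak Amen-Delta}(ii): the operator amenability constant of $A(G,\om)$ equals the infimum of $\|F\|$ over those $F\in A_\Del(G,\Om)^{**}$ satisfying $\la F,\delta_e\ra=1$ and $f\cdot F=f(e)F$ for every $f\in A_\Del(G,\Om)$. Since $G$ is finite, the sum defining the norm on $A_\Del(G,\Om)$ is automatically finite, so $A_\Del(G,\Om)$ coincides with $C(G)$ as a set, is finite-dimensional, and hence $A_\Del(G,\Om)^{**}=A_\Del(G,\Om)$. Under this identification $F$ is simply a function on $G$, the pairing $\la F,\delta_e\ra$ equals $F(e)$, and the bimodule action collapses to pointwise multiplication.

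With these reductions in place, I would analyze the centrality condition directly: $f(s)F(s)=f(e)F(s)$ for every $f\in C(G)$ and every $s\in G$. Because $C(G)$ separates points, this forces $F(s)=0$ whenever $s\ne e$. The normalization $F(e)=1$ then pins $F$ down uniquely as $\mathbf{1}_{\{e\}}$, the characteristic function of the identity. In particular $A(G,\om)$ is operator amenable, and its operator amenability constant must equal $\|\mathbf{1}_{\{e\}}\|_{A_\Del(G,\Om)}$.

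For the final computation I would invoke the explicit formula from the theorem immediately preceding the present one, namely
$$\|\mathbf{1}_{\{e\}}\|_{A_\Del(G,\Om)}=\sum_{\pi\in\G} d_\pi^{3/2}\,\Om(\pi)\,\|\widehat{\mathbf{1}_{\{e\}}}(\pi)\|_2.$$
Under normalized Haar measure on $G$, a direct calculation gives $\widehat{\mathbf{1}_{\{e\}}}(\pi)=|G|^{-1}I_{d_\pi}$, whence $\|\widehat{\mathbf{1}_{\{e\}}}(\pi)\|_2=\sqrt{d_\pi}/|G|$. Summing yields $\|\mathbf{1}_{\{e\}}\|_{A_\Del(G,\Om)}=|G|^{-1}\sum_{\pi}d_\pi^2\,\Om(\pi)$, and the Peter-Weyl identity $|G|=\sum_\pi d_\pi^2$ finishes the calculation. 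The argument is essentially an unfolding of Theorem \ref{T:Amen-weak Amen-Delta}(ii); the main conceptual point is that the module-theoretic constraints on $F$ collapse to point support at $e$ in the finite-dimensional setting, and there is no real obstacle beyond this observation.
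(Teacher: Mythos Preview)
Your proposal is correct and follows essentially the same approach as the paper's own proof: both invoke Theorem \ref{T:Amen-weak Amen-Delta}(ii), identify the unique admissible $F$ as the Dirac function at $e$ via the finite-dimensionality of $A_\Del(G,\Om)$, and then compute its $A_\Del(G,\Om)$-norm using $\widehat{\delta_e}(\pi)=|G|^{-1}I_{d_\pi}$ together with $|G|=\sum_\pi d_\pi^2$. Your write-up is in fact more explicit about why $F$ must be supported at $e$, but the underlying argument is identical.
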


\begin{proof}
It is straightforward to verify that $\delta_e$, the dirac function at $\{e\}$,
is the unique element in $A_\Del(G,\Om)^{**}=A_\Del(G,\Om)$ satisfying
the assumption of Theorem \ref{T:Amen-weak Amen-Delta}(ii). Thus
it follows from Theorem \ref{T:Amen-weak Amen-Delta}(ii) that $A(G,\om)$ is operator amenable
and the operator amenability constant is the $\|\cdot \|_{A_\Del(G,\Om)}$-norm of $\delta_e$. However  $|G| \widehat{\delta_e}(\pi)=1_{B(H_\pi)}$ for every $\pi\in \G$. Therefore, considering the well-known fact that
$|G|=\sum_{\pi\in \G} d^2_\pi$, we have
$$\| \delta_e \|_{A_\Del(G,\Om)}=\sum_{\pi\in \G} d^{\frac{3}{2}}_\pi \Om(\pi) \|\widehat{\delta_e}(\pi) \|_2
=\frac{\sum_{\pi\in \G} d_\pi^2\Om(\pi)}{\sum_{\pi\in \G} d^2_\pi}.$$
\end{proof}

\begin{cor}\label{C:weight-finite group product-amen}
Let $\{G_i \}_{i\in \N}$ be a family of finite groups, and let, for each $i\in \N$, $\om_i$
be the central weight on $\G_i$ and $\Om_i$ its symmetrization. Let $G=\prod_{i\in I} G_i$ and $\om=\prod_{i\in I} \om_{i}$. Suppose further that $\om$ is bounded away from zero.
Then $A(G,\om)$ is operator amenable if and only if
$$M_{G,\om}:=\prod_{i\in \N} \frac{\sum_{\pi\in \G_i} d_\pi^2\Om_i(\pi)}{\sum_{\pi\in \G_i} d^2_\pi}$$
is convergent. In this case, $M_{G, \om}$ is the operator amenability constant of $A(G,\om)$.
\end{cor}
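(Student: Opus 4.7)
The plan is to reduce the statement to Theorem \ref{T:Amen-weak Amen-Delta}(ii) and to exploit the tensor-product structure of $G=\prod_i G_i$ both for building a candidate element in $A_\Del(G,\Om)^{**}$ (sufficiency) and for quotienting down to the finite factors via Proposition \ref{P:weight-subgroup} (necessity). Throughout, write $G_F:=\prod_{i\in F}G_i$, $\om_F:=\prod_{i\in F}\om_i$, $q_F:G\twoheadrightarrow G_F$ for the canonical projection, and $U_F:=\ker q_F$ for finite $F\subset\N$.

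\textbf{Preliminary observation.} The hypothesis that $\om=\prod_i\om_i$ is bounded away from zero already forces $\om\ge1$ pointwise on $\G$: if $\om(\pi)<1$ for some $\pi\in\G$, iterated submultiplicativity gives $\om(\sg_n)\le\om(\pi)^n\to 0$ for any $\sg_n\in\supp\,\pi^{\otimes n}$, contradicting $\inf\om>0$. Applied to single-factor $\pi\in\G_i\subseteq\G$ this yields $\om_i\ge1$ (so $\Om_i\ge1$ and $K_i:=|G_i|^{-1}\sum_{\pi\in\G_i}d_\pi^2\Om_i(\pi)\ge1$); hence the net $K_F:=\prod_{i\in F}K_i$ is increasing with $\sup_F K_F=M_{G,\om}$. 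Using $\widehat{G_F}=\{\otimes_{i\in F}\pi_i:\pi_i\in\G_i\}$, $d_{\otimes\pi_i}=\prod d_{\pi_i}$, $\Om_F(\otimes\pi_i)=\prod\Om_i(\pi_i)$ and $|G_F|=\prod|G_i|$, Theorem \ref{T:weight-amen-finite group} gives operator amenability constant $K_F$ for $A(G_F,\om_F)$.

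\textbf{Sufficiency.} Suppose $M_{G,\om}<\infty$. The pullback $q_F^*$ induces a completely isometric unital algebra embedding $A_\Del(G_F,\Om_F)\hookrightarrow A_\Del(G,\Om)$ (Fourier coefficients on $\pi\notin\G_F$ vanish, and $\om$ agrees with $\om_F$ on $\G_F$), with $q_F^*(\delta_{e_F})=\chi_{U_F}$. Thus $\|\chi_{U_F}\|_{A_\Del(G,\Om)}=K_F\le M_{G,\om}$, so the net $(\chi_{U_F})_F\subset A_\Del(G,\Om)\subseteq A_\Del(G,\Om)^{**}$ is bounded; fix a weak* cluster point $\Xi$. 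Then $\|\Xi\|\le M_{G,\om}$ and $\la\Xi,\delta_e\ra=\lim\chi_{U_F}(e)=1$. To verify $f\cdot\Xi=f(e)\Xi$ for $f\in A_\Del(G,\Om)$, it suffices to show $\|f\chi_{U_F}-f(e)\chi_{U_F}\|_{A_\Del(G,\Om)}\to 0$. Since $\bigcup_F A_\Del(G_F,\Om_F)$ is dense in $A_\Del(G,\Om)$ (truncating the Fourier series and applying Remark \ref{rem-central}(3)), approximate $f$ by $g\in A_\Del(G_{F_0},\Om_{F_0})$ with $\|f-g\|<\eps$. For $F\supseteq F_0$, $g$ is constant equal to $g(e)$ on $U_F$, so $g\chi_{U_F}=g(e)\chi_{U_F}$ and
\[
\|f\chi_{U_F}-f(e)\chi_{U_F}\|\le\|(f-g)\chi_{U_F}\|+|f(e)-g(e)|\,\|\chi_{U_F}\|\le 2\eps M_{G,\om},
\]
using complete contractivity of multiplication in $A_\Del(G,\Om)$ and $\|\delta_e\|_{A_\Del(G,\Om)^*}\le1$. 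Theorem \ref{T:Amen-weak Amen-Delta}(ii) then certifies that $A(G,\om)$ is $M_{G,\om}$-operator amenable.

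\textbf{Necessity.} Conversely, assume $A(G,\om)$ is $K$-operator amenable. For finite $F$, set $H_F:=\{(g_j)\in G:g_j=e\text{ for }j\notin F\}$, a closed subgroup isomorphic to $G_F$. Proposition \ref{P:weight-subgroup} furnishes a complete quotient algebra homomorphism $R_{H_F}:A(G,\om)\twoheadrightarrow A(H_F,\om_{H_F})$; pushing a virtual diagonal through $R_{H_F}\prt R_{H_F}$ shows that $A(H_F,\om_{H_F})$ is $K$-operator amenable. For $\tau\in\widehat{H_F}\cong\widehat{G_F}$, the extensions of $\tau$ to $\G$ are tensor products $\tau\otimes(\otimes_{j\in T}\rho_j)$ with $T\subseteq\N\setminus F$ finite and $\rho_j\in\G_j\setminus\{\text{triv}\}$, so
\[
\om_{H_F}(\tau)=\om_F(\tau)\cdot\inf_{T,(\rho_j)}\prod_{j\in T}\om_j(\rho_j)=\om_F(\tau),
\]
the infimum being attained at $T=\emptyset$ since $\om_j\ge1$ by the preliminary observation. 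Theorem \ref{T:weight-amen-finite group} then gives $K_F\le K$ for every $F$, whence $M_{G,\om}=\sup_F K_F\le K$. Together with sufficiency, the operator amenability constant of $A(G,\om)$ equals $M_{G,\om}$.

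\textbf{Main obstacle.} The delicate point is verifying $f\cdot\Xi=f(e)\Xi$ in sufficiency: the $\chi_{U_F}$ are \emph{not} an approximate identity (their norms grow to $M_{G,\om}$), so one cannot argue directly from $\chi_{U_F}f\to f$. The density-plus-complete-contractivity trick above is what bridges this gap. A secondary subtlety is identifying $\om_{H_F}$ with $\om_F$ in the necessity step; this is exactly where the (deceptively strong) hypothesis that $\om$ be bounded away from zero earns its keep, via the preliminary observation forcing $\om\ge1$.
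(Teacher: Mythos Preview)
Your proof is correct and follows essentially the same two-step strategy as the paper: for sufficiency, build the candidate $\Xi$ as a weak* cluster point of the characteristic functions $\chi_{U_F}$ (the paper's $f_n$) and invoke Theorem~\ref{T:Amen-weak Amen-Delta}(ii); for necessity, quotient down to the finite partial products and apply Theorem~\ref{T:weight-amen-finite group}. The paper simply asserts that ``it is straightforward to verify'' the module condition $f\cdot F=f(e)F$, whereas you actually supply the density-plus-contractivity argument---a genuine service, since this is indeed the step where one could go wrong.

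The one substantive difference is in the necessity direction. The paper obtains the complete quotient $A(G,\om)\to A(G_F,\om_F)$ via the \emph{quotient} map $G\twoheadrightarrow G_F$ (using, implicitly, that $\widehat{G_F}\hookrightarrow\G$ and $\om$ restricts to $\om_F$ there), while you instead realize $G_F$ as the \emph{subgroup} $H_F\le G$ and invoke Proposition~\ref{P:weight-subgroup}. Your route costs a short computation of $\om_{H_F}$, which is exactly where your preliminary observation $\om\ge1$ is needed; the paper's route avoids this computation but relies on a quotient analogue of Proposition~\ref{P:weight-subgroup} that is not stated explicitly. Both are perfectly valid, and yours is arguably the more self-contained argument given what the paper actually proves.
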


\begin{proof}
It is clear that, for each $n \in \N$, there is a complete quotient map from $A(G,\om)$ onto $A(\prod_{i=1}^n G_i, \prod_{i=1}^n \om_i)$. Also
it follows from Theorem \ref{T:weight-amen-finite group} that the amenability constant of   $A(\prod_{i=1}^n G_i, \prod_{i=1}^n \om_i)$
is $$\prod_{i=1}^n \frac{\sum_{\pi\in \G_i} d_\pi^2\Om_i(\pi)}{\sum_{\pi\in \G_i} d^2_\pi}.$$ Therefore the operator amenability constant of
$A(G,\om)$ is at least $M_{G,\om}$. In particular, if
$A(G,\om)$ is operator amenable, then  $M_{G,\om}$
is convergent. Conversely, suppose that $M_{G,\om}$
is convergent. Consider the sequence of continuous functions $\{f_n\}$ on $G$ defined by
	$$ f_n(\{x_i \})= \begin{cases} 1 &\text{$x_1=\cdots=x_n=e$}\\ 0 &\text{otherwise.} \end{cases}$$
For each $n$, we see that
$$\|f_n\|_{A_\Del(G,\Om)}=\prod_{i=1}^n \frac{\sum_{\pi\in \G_i} d_\pi^2\Om_i(\pi)}{\sum_{\pi\in \G_i} d^2_\pi}.$$
In particular, $\{f_n\}$ is bounded in $A_\Del(G,\Om)$. Let $F$ be a weak$^*$-cluster point of $\{f_n\}$ in $A_\Del(G,\Om)^{**}$. Then it is straightforward to verify that $F$ satisfies
the hypothesis of Theorem \ref{T:Amen-weak Amen-Delta}(ii). Moreover $\|F\|=M_{G,\om}$. Thus $A(G,\om)$ is operator amenable
with the operator amenability constant $M_{G,\om}$.
\end{proof}

The preceding corollary has an interesting application when each $G_i$ is $S_3$; the permutation group on $\{1,2,3\}$.
It is well-known (e.g. \cite[27.61(a)]{HR2}) that $\widehat{S_3}$
have two 1-dimensional elements and one 2-dimensional element.
Using this fact, we can construct Beurling-Fourier algebras on countably infinite products of $S_3$ so that they are operator amenable.
Moreover, we can let amenability constant be as large as we would like!
This is something that does not happen in the Fourier algebra case since the amenability constant is always 1 \cite{Ruan}.

\begin{thm}
Let $G_i=S_3$ for every $i\in \N$, and let, $\om_{a_i}$ be the central weight (\ref{Eq:weight-non-abelian-ln}) on $\G_i$ defined in Example \ref{E:weight-compact}. Let $G=\prod_{i\in \N} G_i$ and $\om=\prod_{i\in \N} \om_i$. Then:\\
$(i)$ $A(G,\om)$ is operator amenable if and only if
$\sum_{i=1}^\infty (2^{2a_i}-1)$ is convergent;\\
$(ii)$ For every $1\leq K < \infty$, we can choose $\{a_i \}$ so that
the amenability constant of $A(G,\om)$ is $K$.
\end{thm}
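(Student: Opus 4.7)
The plan is to apply Corollary \ref{C:weight-finite group product-amen} in the concrete setting at hand, so the first step is to compute the per-factor ratio appearing in $M_{G,\om}$ when the factor is $S_3$. Recall that $\widehat{S_3}$ consists of three representations of dimensions $1,1,2$, each of which is self-conjugate. Taking $\om_{a}(\pi)=d_\pi^{a}$, the symmetrization is $\Omega_{a}(\pi)=\om_{a}(\pi)\om_{a}(\overline{\pi})=d_\pi^{2a}$, and a direct computation gives
\[
r(a):=\frac{\sum_{\pi\in\widehat{S_3}} d_\pi^{2}\Omega_{a}(\pi)}{\sum_{\pi\in\widehat{S_3}} d_\pi^{2}}=\frac{1+1+4\cdot 2^{2a}}{6}=1+\frac{2(2^{2a}-1)}{3}.
\]

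For part (i), one first checks that $\om=\prod_i\om_{a_i}$ is bounded away from zero, which is automatic because $\om_{a_i}(\pi)=d_\pi^{a_i}\ge 1$ for every $\pi\in\widehat{S_3}$. Corollary \ref{C:weight-finite group product-amen} then says that $A(G,\om)$ is operator amenable precisely when the infinite product
\[
M_{G,\om}=\prod_{i=1}^{\infty}r(a_i)=\prod_{i=1}^{\infty}\bigl(1+c_i\bigr),\qquad c_i=\tfrac{2}{3}\bigl(2^{2a_i}-1\bigr)\ge 0,
\]
converges to a finite value. Since the $c_i$ are non-negative, the standard infinite product criterion gives convergence of $\prod_i(1+c_i)$ if and only if $\sum_i c_i$ converges, which is equivalent to the convergence of $\sum_{i=1}^{\infty}(2^{2a_i}-1)$, proving (i).

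For part (ii), given any $K\in[1,\infty)$, I would produce explicit exponents $\{a_i\}$ realizing $K$ as the operator amenability constant. The simplest choice is to take $a_1$ to be the unique non-negative solution of $r(a_1)=K$, namely $a_1=\tfrac{1}{2}\log_2\!\bigl(1+\tfrac{3}{2}(K-1)\bigr)$, and set $a_i=0$ for $i\ge 2$; then $M_{G,\om}=r(a_1)=K$ by part (i) combined with Corollary \ref{C:weight-finite group product-amen}. If instead one prefers all exponents to be strictly positive, one can set $a_i=\tfrac{1}{2}\log_2\!\bigl(1+\tfrac{3}{2}(e^{(\log K)/2^{i}}-1)\bigr)$, so that $\log(1+c_i)=(\log K)/2^{i}$ and hence $\prod_i(1+c_i)=K$. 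The whole argument is essentially bookkeeping on top of Corollary \ref{C:weight-finite group product-amen}; the only hypothesis of that corollary that might a priori fail, the weight being bounded away from zero, is immediate here because $d_\pi\ge 1$, so no genuine obstacle is expected.
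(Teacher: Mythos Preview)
Your argument is correct and follows essentially the same route as the paper: both reduce to Corollary \ref{C:weight-finite group product-amen}, compute the per-factor ratio for $S_3$ as $\frac{1+2^{2a_i+1}}{3}=1+\frac{2}{3}(2^{2a_i}-1)$, invoke the standard criterion for convergence of an infinite product with non-negative terms, and for (ii) choose $a_1$ to hit $K$ with $a_i=0$ for $i\ge 2$. Your explicit verification that $\om$ is bounded away from zero and the alternative choice with all $a_i>0$ are small additions, but the core is the same.
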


\begin{proof}
By Corollary \ref{C:weight-finite group product-amen}, $A(G,\om)$ is operator amenable if and only if its amenability
constant which is
$$\prod_{i\in \N} \frac{\sum_{\pi\in \G_i} d_\pi^2\Om_i(\pi)}{\sum_{\pi\in \G_i} d^2_\pi}
=\prod_{i\in \N} \frac{1+2^{2a_i+1}}{3}=\prod_{i\in \N} (1+ \frac{2^{2a_i+1}-2}{3})$$
is finite. However this happens if and only if $\displaystyle \sum_{i=1}^\infty \frac{2^{2a_i+1}-2}{3}$ is convergent.
Thus (i) holds since $$\sum_{i=1}^\infty \frac{2^{2a_i+1}-2}{3}=2/3 \sum_{i=1}^\infty (2^{2a_i}-1).$$
The proof of (ii) is easy. In fact, there are various way to chose the required $\{a_i \}$. For example,
we can pick $a_1$ so that $K=\displaystyle \frac{1+2^{2a_1+1}}{3}$ and take $a_i=0$ for $i=2,3,\cdots$.
\end{proof}

In \cite{G2}, N. Gr{\o}nb{\ae}k has shown that the Beurling algebra $L^1(H,\om)$ on a
locally compact group $H$ is amenable if and only if $H$ is amenable and $\Om$ is bounded,
where $\Om$ is the symmetrization of $\om$ given by $\Om(x) = \om(x)\om(x^{-1})$, $x\in H$.
In the below we prove a weaker version of Gr{\o}nb{\ae}k's result for Beurling-Foureir algebras on
compact groups. This presents a nice duality to Gr{\o}nb{\ae}k's criteria.

\begin{thm}\label{T:weight-op. amen}
Let $G$ be a compact group, and let $\om$ be a central weight on $\G$. Then the followings
holds:\\
$(i)$ If $\Om$ is bounded, then $A(G,\om)$ is operator amenable;\\
$(ii)$ If $\lim_{\pi \to \infty} \Om(\pi)=\infty$, then $A(G,\om)$ is not
operator amenable.
\end{thm}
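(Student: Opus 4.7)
The plan for part (i) is to reduce to the operator amenability of the unweighted Fourier algebra. Since $\om(\pi) \geq \delta > 0$ for some $\delta > 0$, and $\Om(\pi) = \om(\pi) \om(\overline{\pi})$ is bounded above by some constant $C$, we obtain $\om(\pi) \leq \Om(\pi)/\om(\overline{\pi}) \leq C/\delta$, so $\om$ is bounded above as well. Thus the central operator $W$ and its inverse $W^{-1}$ are both bounded in $VN(G)$, and the identity map $A(G) \to A(G,\om)$ becomes a completely bounded Banach algebra isomorphism with completely bounded inverse (the dual map is simply right multiplication by $W$ or $W^{-1}$, and both are bounded endomorphisms of $VN(G)$ at every matrix level). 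Since $G$ is compact and hence amenable, Ruan's theorem gives that $A(G)$ is operator amenable. Operator amenability is preserved under completely bounded algebra isomorphisms: pushing a virtual diagonal of $A(G)$ through the tensor square of the isomorphism and taking the second adjoint gives a virtual diagonal for $A(G,\om)$.

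For part (ii), I would argue by contradiction via Theorem \ref{T:Amen-weak Amen-Delta}(ii). Assume $A(G, \om)$ is $K$-operator amenable; then there exists $F \in A_\Del(G, \Om)^{**}$ with $\|F\| = K$, $\la F, \delta_e \ra = 1$, and $f \cdot F = f(e) F$ for every $f \in A_\Del(G, \Om)$. By the Plancherel description, $A_\Del(G, \Om)$ identifies isometrically with $\bigoplus^{\ell^1}_\pi M_{d_\pi}$ using the Hilbert-Schmidt norm on each block and the weight $d_\pi^{3/2} \Om(\pi)$, so the dual $A_\Del(G, \Om)^*$ identifies with $\bigoplus^{\ell^\infty}_\pi M_{d_\pi}$ carrying weights $(d_\pi^{3/2} \Om(\pi))^{-1}$. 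For each $\pi \in \G$, I would construct a block-supported functional $T_\pi \in A_\Del(G, \Om)^*$ satisfying $\|T_\pi\|_* \leq C_1/\Om(\pi)$, and establish a uniform lower bound $|\la F, T_\pi \ra| \geq c > 0$ independent of $\pi$. Combining these gives
\[
c \;\leq\; |\la F, T_\pi \ra| \;\leq\; \|F\| \cdot \|T_\pi\|_* \;\leq\; \frac{C_1 K}{\Om(\pi)},
\]
so $\Om(\pi) \leq C_1 K / c$ for every $\pi$, contradicting $\Om(\pi) \to \infty$.

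The main obstacle is producing the uniform lower bound $|\la F, T_\pi \ra| \geq c$. Here I would exploit the module identity applied to the normalized central character $f = \chi_\pi/d_\pi$, which satisfies $f(e) = 1$ and has $A_\Del(G, \Om)$-norm exactly $\Om(\pi)$: this yields $(\chi_\pi/d_\pi) \cdot F = F$, so $F$ is fixed under multiplication by every normalized central character. This strong invariance, together with $\la F, \delta_e \ra = 1$, should pin down the pairing $\la F, T_\pi \ra$ when $T_\pi$ is taken to be the $\pi$-isotypic slice of $\delta_e$, by analogy with the role played by $\delta_e$ in the finite-group case treated in Theorem \ref{T:weight-amen-finite group}. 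The delicate bookkeeping of how the characters $\chi_\pi/d_\pi$ interact with the matrix blocks under pointwise multiplication, via Schur orthogonality and the tensor decomposition \eqref{Eq:tensor formula}, is the crux of the argument.
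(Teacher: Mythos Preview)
Your argument for (i) is correct and in fact more direct than the paper's. You observe that boundedness of $\Om$ together with $\om\ge\delta>0$ forces $\om$ itself to be bounded, so the weight multiplier $W$ and its inverse are both bounded central elements of $VN(G)$; hence $A(G,\om)$ is completely boundedly isomorphic to $A(G)$ as a Banach algebra, and Ruan's theorem transfers operator amenability. The paper instead uses that bounded $\Om$ gives $A_\Del(G,\Om)=A_\Del(G)$ and then invokes the known fact that $I_\Del(\{e\})$ has a bounded approximate identity, via Theorem~\ref{T:Amen-weak Amen-Delta}(i). Your route avoids the $A_\Del$ machinery entirely.

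For (ii) there is a genuine gap. The inequality $|\la F,T_\pi\ra|\ge c>0$ that you aim for cannot hold as stated: your own upper bound already gives $|\la F,T_\pi\ra|\le K/\Om(\pi)\to 0$, so any uniform positive lower bound is impossible. What one would need instead is some exact relation among the numbers $\la F,T_\pi\ra$ (constancy, or a summation identity) that contradicts $\la F,\delta_e\ra=1$. In the abelian case the module identity with characters does give $\la F,T_n\ra=\la F,T_m\ra$ for all $n,m$, whence all vanish and one concludes via $\delta_e=\sum_n T_n$ in norm (this last convergence uses $\Om\to\infty$). But for non-abelian $G$ multiplication by $\chi_\pi/d_\pi$ mixes blocks through the tensor decomposition rather than permuting them, and you have not indicated how to extract a usable identity from this; your own remark that the Schur-orthogonality bookkeeping is ``the crux of the argument'' is precisely the unresolved point.

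The paper sidesteps this difficulty. It uses the inclusion $A_\Del(G,\Om)\subseteq A(G,\Om)$ and the fact that $A(G,\Om)=C^*_r(G,\Om^{-1})^*$ is a dual space: a bounded net $\{f_\alpha\}\subset A_\Del(G,\Om)$ approximating $F$ has a weak$^*$ cluster point $g\in A(G,\Om)$, which is an honest continuous function on $G$. The hypothesis $\Om(\pi)\to\infty$ is used exactly once, to show that each point mass $\delta_t=\lambda(t)$ lies in the predual $C^*_r(G,\Om^{-1})$ (since $\Om^{-1}(\pi)\to 0$). Pairing against $\delta_e$ gives $g(e)=1$, and the module identity $f\cdot F=f(e)F$ (tested against $\delta_t$ with suitable $f$) forces $g(t)=0$ for $t\neq e$. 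A continuous function on an infinite compact group cannot look like this, giving the contradiction. This argument never needs to track individual Fourier blocks of $F$.
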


\begin{proof}
(i) If $\Om$ is bounded, then $A_\Del(G,\Om)=A_\Del(G)$. However, by
\cite[Theorem 3.9(iii)]{FSS1}, $I_\Del(\{e\})$ has a bounded approximate identity.
Hence the result follows from Theorem \ref{T:Amen-weak Amen-Delta}(i).\\
(ii) Suppose that $A(G,\om)$ is operator amenable. Then by Theorem \ref{T:Amen-weak Amen-Delta}(ii)
and going to an appropriate subnet, there is a bounded net $\{ f_\alpha \}_{\alpha} \subset A_{\Del,\Om}(G)$
such that $f_\alpha(e)=1$ for all $\alpha$, and
\begin{equation}\label{Eq:amen-delta}
ff_\alpha=f(e)f_\alpha\ \text{for all}\ f\in A_\Del(G,\Om).
\end{equation}
Since for every $n\in \N$ and $T\in M_n$, $\|T\|_1 \leq n^{1/2}\|T \|_2$, and so,
we have $A_\Del(G,\Om)\subseteq A(G,\Om)$.
Therefore we can assume that $\{ f_\alpha \}_{\alpha} \subset A(G,\Om)$.
Now let $g$ be a weak$^*$-cluster point
of $\{f_\alpha\}$ in $A(G,\Om)=C^*_r(G, \Om^{-1})^*$. Let $I :\G \to \oplus_{\pi \in \G}M_{d_\pi}$
be defined by $I(\pi)=1_{B(H_\pi)}$ for all $\pi \in \G$.
It is clear that $I$ is the inverse Fourier transform of $\delta_e$, the evaluation functional
on $A(G, \Om)$ at $\{ e\}$.
Now since $\lim_{\pi \to \infty} \Om(\pi)=\infty$, it follows that $\delta_e=\check{I} \in C^*_r(G, \Om^{-1})$, and so,
	$$g(e)=\la g, \delta_e \ra=\lim_{\alpha} \la f_\alpha, \delta_e \ra=1.$$
On the other hand, it follows routinely from (\ref{Eq:amen-delta}) that $g(t)=0$ if $t\neq e$.
Since $g$ is continuous, it follows that $G$ is finite which contradict the fact that
$\lim_{\pi\to \infty} \Om(\pi)=\infty$. This completes the proof.

\end{proof}

\begin{cor}\label{C:weight-Op amen-simple Lie group}
Let $G$ be a compact connected, simple Lie group and let $\sg_a$ and $\om_a$ be the
central weights on $\G$ defined in Example \ref{E:weight-compact}.
Then $A(G,\sg_a)$ or $A(G,\om_a)$ is operator amenable if and only if $a=0$.
\end{cor}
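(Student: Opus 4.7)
The strategy is to reduce each direction to Theorem \ref{T:weight-op. amen}. The case $a=0$ is trivial: both $\sg_0$ and $\om_0$ are identically $1$, so $A(G,\sg_0)=A(G,\om_0)=A(G)$, which is operator amenable by Ruan's theorem since $G$ is compact (hence amenable). The content is therefore in the converse: for $a>0$ we must rule out operator amenability of $A(G,\sg_a)$ and $A(G,\om_a)$.

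For this, I plan to verify the hypothesis of Theorem \ref{T:weight-op. amen}(ii), namely that the symmetrizations satisfy $\Om(\pi)\to\infty$ as $\pi\to\infty$ in the discrete topology on $\G$. Since every irreducible unitary representation $\pi$ of a compact group satisfies $d_{\overline{\pi}}=d_\pi$, the symmetrizations of $\sg_a$ and $\om_a$ are explicitly
\[
\Sigma_a(\pi)=(1+\ln d_\pi)^{2a},\qquad \Om_a(\pi)=d_\pi^{2a}.
\]
Thus both symmetrizations tend to $\infty$ at infinity on $\G$ as soon as one knows that $d_\pi\to\infty$ whenever $\pi\to\infty$ in $\G$, equivalently that for every $N$ the set $\{\pi\in\G:d_\pi\le N\}$ is finite.

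The key step is therefore this representation-theoretic input. I would invoke the fact that a compact connected simple (in particular semisimple) Lie group $G$ has dual $\G$ parametrized by the dominant integral weights in a finite-dimensional real vector space, and the Weyl dimension formula expresses $d_\pi$ as a product of positive linear functionals of the highest weight (shifted by $\rho$). Consequently $d_\pi$ grows without bound along any sequence of highest weights going to infinity, and the level sets $\{d_\pi\le N\}$ are finite. This yields $d_\pi\to\infty$ as $\pi\to\infty$.

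Combining these two steps, Theorem \ref{T:weight-op. amen}(ii) applies and gives that neither $A(G,\sg_a)$ nor $A(G,\om_a)$ is operator amenable when $a>0$. The only potential obstacle is the representation-theoretic fact above, but for compact connected simple Lie groups this is standard and can be cited directly from the theory of highest weights.
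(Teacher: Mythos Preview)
Your proposal is correct and follows essentially the same route as the paper: both handle $a=0$ via Ruan's theorem, and for $a>0$ both reduce to Theorem~\ref{T:weight-op. amen}(ii) by observing (via $d_{\overline{\pi}}=d_\pi$) that the symmetrizations tend to infinity once one knows $d_\pi\to\infty$ on $\G$. The only difference is the source for this last fact: the paper cites \cite[Lemma~9.1]{Rid}, whereas you invoke highest weight theory and the Weyl dimension formula directly---both are standard for compact connected simple Lie groups.
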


\begin{proof}
When $a=0$, $A(G,\sg_a)=A(G,\om_a)=A(G)$. Hence the result follows from
\cite{Ruan}. For the converse, it is shown in the proof of \cite[Lemma 9.1]{Rid}
that, for any positive integer $n$,
there are only finitely many elements in $\G$ whose dimension is $n$. Thus
$d_\pi\to \infty$ as $\pi \to \infty$. Therefore if $a>0$, by
Theorem \ref{T:weight-op. amen}, neither of $A(G,\sg_a)$ or $A(G,\om_a)$
is operator amenable.
\end{proof}

\subsection{Operator weak amenability}\label{S:BF alg-compact-OWA}

In this section we consider the question of whether a Beurling-Fourier algebra on
a compact group can be operator weakly amenable. Our main tool is to use the criterion
presented in Theorem \ref{T:Amen-weak Amen-Delta}(iii). We first need the following definition.

\begin{defn}\label{D:weight-Max}
Let $\pi$ be a continuous finite-dimensional (unitary) representation of $G$,
and let $\om$ be a central weight on $\G$. For each $n\in \N$, we define
$$n(\om, \pi)=\max \{\om(\tau) \mid \tau \in \supp\, \pi \}.$$
\end{defn}

\begin{prop}\label{P:weight-point der}
Let $\om$ be a central weight on $\G$. Suppose that,
for every $\pi \in \G$,
$$\inf \{ \frac{n(\om,\pi^{\otimes n})}{n} \mid n\in \N \}=0,$$
where $\pi^{\otimes n}=\pi \otimes \cdots \otimes \pi$, $n$-times. Then $A(G,\om)$
has no non-zero continuous point derivation at $e$.
\end{prop}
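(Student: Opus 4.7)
The plan is to show that any continuous point derivation $d$ at $e$ on $A(G,\om)$ vanishes on the trigonometric polynomials $\fT(G)$, and then invoke density of $\fT(G)$ in $A(G,\om)$ to conclude $d\equiv 0$. The key tool is the Leibniz rule applied to matrix coefficients of the form $f(x)=\la \pi(x)v,v\ra$ with $v$ a unit vector in $H_\pi$, combined with a norm bound on $\|f^n\|_{A(G,\om)}$ controlled by $n(\om,\pi^{\otimes n})$.

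First I would establish the key estimate. For a unit vector $v \in H_\pi$, set $f(x) = \la \pi(x)v,v\ra$. Then $f(e)=1$ and $f^n(x) = \la \pi^{\otimes n}(x)v^{\otimes n},v^{\otimes n}\ra$ is a matrix coefficient of $\pi^{\otimes n}$ at the unit vector $v^{\otimes n}$, hence $\|f^n\|_{A(G)} \le 1$. Because the Fourier support of $f^n$ is contained in $\supp\,\pi^{\otimes n}$,
\[
\|f^n\|_{A(G,\om)} = \sum_{\sigma \in\, \supp\,\pi^{\otimes n}} d_\sigma \om(\sigma)\|\widehat{f^n}(\sigma)\|_1 \le n(\om,\pi^{\otimes n})\|f^n\|_{A(G)} \le n(\om,\pi^{\otimes n}).
\]
Combining with Leibniz, $n|d(f)| = |d(f^n)| \le \|d\|\,n(\om,\pi^{\otimes n})$, so $|d(f)| \le \|d\|\inf_n \frac{n(\om,\pi^{\otimes n})}{n} = 0$ by hypothesis, giving $d(f)=0$ for every such $f$.

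Specializing to $v = \xi_i^\pi$ yields $d(\pi_{ii}) = 0$ for every $\pi\in\G$ and every $i$. To handle the off-diagonal entries, I would polarize: for $i\ne j$, the unit vectors $v_+ = (\xi_i^\pi + \xi_j^\pi)/\sqrt{2}$ and $v_\times = (\xi_i^\pi + i\xi_j^\pi)/\sqrt{2}$ produce matrix coefficients $\tfrac{1}{2}(\pi_{ii}+\pi_{ij}+\pi_{ji}+\pi_{jj})$ and $\tfrac{1}{2}(\pi_{ii}+i\pi_{ij}-i\pi_{ji}+\pi_{jj})$ respectively. Applying the previous step and subtracting the known $d(\pi_{ii})=d(\pi_{jj})=0$ gives $d(\pi_{ij}+\pi_{ji})=0$ and $d(\pi_{ij}-\pi_{ji})=0$, so $d(\pi_{ij}) = d(\pi_{ji}) = 0$.

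Hence $d$ vanishes on $\fT(G) = \spn\{\pi_{ij} : \pi\in\G,\, 1\le i,j\le d_\pi\}$. Density of $\fT(G)$ in $A(G,\om)$ follows from truncating the Fourier expansion: for $f\in A(G,\om)$, the partial sums $f_F = \sum_{\pi\in F} d_\pi \text{tr}(\widehat{f}(\pi)\pi(\cdot))$ over finite $F\subseteq \G$ lie in $\fT(G)$, and the tail satisfies $\|f - f_F\|_{A(G,\om)} = \sum_{\pi\notin F} d_\pi\om(\pi)\|\widehat{f}(\pi)\|_1 \to 0$. Continuity of $d$ then forces $d\equiv 0$. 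The principal technical step is the norm bound $\|f^n\|_{A(G,\om)} \le n(\om,\pi^{\otimes n})$, which combines the matrix-coefficient inequality in $A(G)$ with the definition of $n(\om,\pi^{\otimes n})$; the polarization and density arguments are routine.
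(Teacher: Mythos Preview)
Your proof is correct and follows the same strategy as the paper: show that $d$ vanishes on every coefficient $f(x)=\la \pi(x)v,v\ra$ with $v$ a unit vector, using the Leibniz identity $d(f^n)=nd(f)$ together with the bound $\|f^n\|_{A(G,\om)}\le n(\om,\pi^{\otimes n})$, and then conclude by density of $\fT(G)$. The only cosmetic difference is that the paper packages the passage from diagonal coefficients to all $\pi_{ij}$ by invoking the standard fact that $\spn P^1(G)=\fT(G)$, whereas you spell out the polarization explicitly; both amount to the same thing.
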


\begin{proof}
Let $P^1(G)$ denote the set of continuous positive-definite
functions on $G$ such that $\|f\|_{A(G)}=f(e)=1$. Each function in $P^1(G)$ is uniquely
determined by a representation $\pi\in \G$ and an orthonormal vector $\xi\in H_\pi$ so
that $f(x)=\la \pi(x)\xi \mid \xi \ra.$
Hence, in particular, $A(G,\om)$ is the $\|\cdot \|_{A(G,\om)}$-closure of $\spn\{P^1(G)\}$ which is $\fT(G)$.

Now let $d : A(G,\om) \to \C$ be a continuous point derivation at $e$. We will
show that $d=0$ by showing that $d$ vanishes on $P^1(G)$. Let $f\in P^1(G)$, and
$\pi\in \G$ and an orthonormal vector $\xi\in H_\pi$ so
that $f(x)=\la \pi(x)\xi \mid \xi \ra.$ It is routine to verify that
\begin{equation}\label{Eq:point der}
d(f^n)=n[f(e)]^{n-1}d(f)=nd(f) \ \ (n\in \N).
\end{equation}
On the other hand, for each $n\in \N$, by Schur orthogonality relation,
$\widehat{f^n}(\tau)=0$ if $\tau \not\in \supp\, \pi^{\otimes n}$. Thus
\begin{eqnarray*}
\|\widehat{f^n}\|_{A(G,\om)} &=& \sum_{\tau \in \supp\, \pi^{\otimes n}} \om(\tau)d_\tau \|\widehat{f^n}(\tau)\|_1
\\ &\leq &  n(\om, \pi^{\otimes n}) \sum_{\tau \in \supp\, \pi^{\otimes n}}d_\tau \|\widehat{f^n}(\tau)\|_1
\\ &=& n(\om, \pi^{\otimes n}) \|f^n\|_{A(G)}
\\ &=& n(\om, \pi^{\otimes n}),
\end{eqnarray*}
where the last equality follows since $f^n$ is a positive-definite function, and so,
$\|f^n\|_{A(G)}=f^n(e)=1$. Therefore, by (\ref{Eq:point der}),
$$|d(f)|\leq \frac{\|d\|\|f^n\|_{A(G,\om)}}{n} \leq \|d\| \frac{n(\om, \pi^{\otimes n})}{n} \ \ \ (n\in \N).$$
So by hypothesis, $d(f)=0$.
\end{proof}

The preceding proposition was proven in \cite[Proposition 5.1]{Sa} for $G$ abelian. Our extension allow
us to study operator weak amenability for the case when $G$ is non-abelian. The following theorem
is one application of Proposition  \ref{P:weight-point der}. Other applications will be
given in Section \ref{S:weight-SU(2)}.

\begin{thm}
Let $G$ be a compact, totally disconnected group, and let $\om$ be
a central weight on $G$. Then $A(G,\om)$ is operator weakly amenable.
\end{thm}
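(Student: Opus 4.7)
The plan is to invoke Theorem \ref{T:Amen-weak Amen-Delta}(iii), which reduces operator weak amenability of $A(G,\om)$ to showing that $A_\Del(G,\Om)$ admits no non-zero continuous point derivation at $e$. I will establish this by running the argument behind Proposition \ref{P:weight-point der} directly inside $A_\Del(G,\Om)$ rather than $A(G,\om)$, using the fact that $\fT(G)$ is dense in $A_\Del(G,\Om)$ (by absolute convergence of the Fourier series in the $A_\Del(G,\Om)$ norm) and that $\spn P^1(G) \supseteq \fT(G)$ by polarization.

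The key structural input is that, since $G$ is compact and totally disconnected, every neighborhood of $e$ contains an open normal subgroup; hence every $\pi \in \G$ has open kernel and factors through a finite quotient $G/N_\pi$. Consequently, for every $n \in \N$,
\[
\supp\,\pi^{\otimes n} \subseteq \widehat{G/N_\pi},
\]
which is a finite set \emph{independent of $n$}. Setting $M_\pi := \max\{d_\tau^{1/2}\Om(\tau) : \tau \in \widehat{G/N_\pi}\} < \infty$, this yields a uniform (in $n$) bound on $\max_{\tau \in \supp \pi^{\otimes n}} d_\tau^{1/2}\Om(\tau)$.

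Now let $d$ be a continuous point derivation on $A_\Del(G,\Om)$ at $e$ and fix $f(x) = \la \pi(x)\xi, \xi \ra$ with $\|\xi\| = 1$. Since $f^n$ is positive definite with $f^n(e) = 1$, one has $\sum_\tau d_\tau \|\widehat{f^n}(\tau)\|_1 = \|f^n\|_{A(G)} = 1$. Using the matrix inequality $\|T\|_2 \le \|T\|_1$ then gives
\[
\|f^n\|_{A_\Del(G,\Om)} = \sum_\tau d_\tau^{3/2}\Om(\tau)\|\widehat{f^n}(\tau)\|_2 \le \max_{\tau \in \supp \pi^{\otimes n}} d_\tau^{1/2}\Om(\tau) \le M_\pi.
\]
Combined with $d(f^n) = n\, d(f)$ (Leibniz plus $f(e)=1$) and continuity of $d$, this forces $|d(f)| \le \|d\| M_\pi / n \to 0$, so $d(f) = 0$; polarization and density of $\fT(G)$ in $A_\Del(G,\Om)$ then give $d \equiv 0$. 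The main obstacle is the change from the $d_\tau\,\om(\tau)\,\|\cdot\|_1$-summation of $A(G,\om)$ used in Proposition \ref{P:weight-point der} to the $d_\tau^{3/2}\,\Om(\tau)\,\|\cdot\|_2$-summation defining $A_\Del(G,\Om)$; the extra factor $d_\tau^{1/2}$ is precisely what makes the uniform control of $\supp \pi^{\otimes n}$, guaranteed by total disconnectedness, indispensable.
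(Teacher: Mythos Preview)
Your argument is correct and follows essentially the same route as the paper's proof. The only organizational difference is that the paper packages the extra dimension factor by introducing the auxiliary central weight $\Om_1(\sigma)=\Om(\sigma)d_\sigma$, applies Proposition~\ref{P:weight-point der} to $A(G,\Om_1)$, and then uses the continuous dense inclusion $A(G,\Om_1)\subseteq A_\Del(G,\Om)$ to transfer the vanishing of point derivations; you instead unpack the same computation directly inside $A_\Del(G,\Om)$, which amounts to the same estimate.
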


\begin{proof}
It is well-known that $G$ has a base of the identity consisting of open, normal
compact subgroups of $G$, and so, $G$ is a projective limit of finite groups in the sense of
\cite[Definition 4.1.4]{RS} (see also \cite[Theorem 4.1.14]{RS}). Hence a similar argument
to \cite[Theorem 27.43]{HR2} shows that, for every $\pi \in \G$,
there is a finite group $H$ (which is the quotient of $G$ by some open normal compact subgroup)
so that $\pi \in \widehat{H}$. Hence
$$\{\tau \mid \tau \in \supp\, \pi^{\otimes n}, n\in \N \}\subset \widehat{H}$$
is finite. Thus for the weight $\Om_1(\sg)=\Om(\sg)d_\sg$, $\sg \in \G$ (Remark \ref{rem-central} (2)), we have
$$\inf \{ \frac{n(\Om_1,\pi^{\otimes n})}{n} \mid n\in \N \}=0,$$
for every $\pi \in \G$. Therefore, by Proposition \ref{P:weight-point der},
$A(G,\Om_1)$ has no continuous point derivation at $e$. Since $A(G,\Om_1) \subseteq A_\Del(G,\Om)$,
the same holds for $A_\Del(G,\Om)$. It follows from Theorem \ref{T:Amen-weak Amen-Delta}(iii),
$A(G,\om)$ is operator weakly amenable.
\end{proof}

\subsection{Arens regularity}\label{S:BF alg-compact-Arens regularity}

In this section, we study the Arens regularity of Beurling-Fourier algebras. We
provide classes of Beurling-Fourier algebras on compact groups that either satisfy
or fail the Arens regularity.

\begin{defn}\label{D:Weight-Arens reg-operator}
Let $W$ be a weight on $A(G)$. We denote the bounded operator $\Gamma(W)(W^{-1} \otimes W^{-1})$ by $\TOm$, and we write
	$$\TOm = \bigoplus_{\pi, \sigma \in \widehat{G}}\TOm(\pi, \sigma),$$
where $\TOm(\pi, \sigma) \in M_{d_\pi}\otimes M_{d_\sigma}$.
\end{defn}

The following theorem is proven in \cite[Theorem 8.11]{DL} in the case where $G$ is abelian. We extend
it to the general case and apply it to construct Arens regular Beurling-Fourier algebras
on non-abelian compact groups.

\begin{thm}\label{T:weight-Arens reg-positive}
Let $\om$ be a central weight on $\G$. Suppose that
$$\lim_{\pi\to \infty} \limsup_{\sg\to \infty} \norm{\TOm(\pi, \sigma)}_{M_{d_\pi}\otimes M_{d_\sigma}}
= \lim_{\sg\to \infty} \limsup_{\pi\to \infty} \norm{\TOm(\pi, \sigma)}_{M_{d_\pi}\otimes M_{d_\sigma}}=0.$$
Then $A(G,\om)$ and all its even duals are Arens regular.
\end{thm}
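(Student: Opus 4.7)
The plan is to follow the strategy of \cite[Theorem~8.11]{DL}, which treats the abelian case, adapted to the compact non-abelian setting via the dual framework of Section~\ref{S:Beurling-Fourier alg}. The key ingredients are Pym's iterated-limits criterion for Arens regularity, the multiplication formula of Theorem~\ref{T:extension of co-product}, and the decay hypothesis on the blocks of $\TOm$.

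First I would make the Arens pairing explicit. For $\phi = AW \in VN(G, W^{-1})$ and $f, g \in A(G, \om)$, Theorem~\ref{T:extension of co-product} together with \eqref{modified-co-multiplication} yields
\[
\la \phi, f g \ra = \la \Gamma(A) \TOm, \phi_f \otimes \phi_g \ra_{VN(G \times G),\, A(G \times G)},
\]
where $\phi_f = \Phi_*(f)$ and $\phi_g = \Phi_*(g)$ lie in $A(G)$. Expanding via the Fourier transform on the compact group $G \times G$, this becomes an absolutely convergent double series indexed by $\G \times \G$ whose $(\pi, \sigma)$-summand is bounded in absolute value by $d_\pi d_\sigma \|A\|_{VN(G)} \|\TOm(\pi, \sigma)\| \|\widehat{\phi_f}(\pi)\|_1 \|\widehat{\phi_g}(\sigma)\|_1$. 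By Pym's criterion, Arens regularity of $A(G, \om)$ is equivalent to the coincidence, for every $\phi$ and every pair of bounded sequences $(f_m), (g_n) \subset A(G, \om)$, of the iterated limits $\lim_m \lim_n \la \phi, f_m g_n \ra$ and $\lim_n \lim_m \la \phi, f_m g_n \ra$ whenever both exist. A diagonal argument using finite-dimensionality of each $M_{d_\pi}$ allows us to pass to a subsequence along which $\widehat{\phi_{f_m}}(\pi) \to \alpha_\pi$ in $M_{d_\pi}$ and $\widehat{\phi_{g_n}}(\sigma) \to \beta_\sigma$ in $M_{d_\sigma}$ for every $\pi, \sigma$. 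Along this subsequence, the iterated limits of each individual $(\pi, \sigma)$-summand coincide termwise with the common value $c^\ast_{\pi, \sigma} = d_\pi d_\sigma \operatorname{tr}((\Gamma(A)\TOm)(\pi,\sigma)(\alpha_\pi \otimes \beta_\sigma))$; it therefore suffices to justify exchanging $\lim$ and $\sum$ in both iterated orders.

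The heart of the proof is a tail estimate. Fix $\epsilon > 0$ and set $M_0 = \|A\|_{VN(G)} \sup_m \|\phi_{f_m}\|_{A(G)} \sup_n \|\phi_{g_n}\|_{A(G)}$. Using the hypothesis $\lim_\pi \limsup_\sigma \|\TOm(\pi,\sigma)\| = 0$, choose a finite $F_1 \subset \G$ such that for every $\pi \notin F_1$, the set $\{\sigma : \|\TOm(\pi,\sigma)\| \ge \epsilon/M_0\}$ is finite; choose $F_2 \subset \G$ analogously from the symmetric hypothesis. Split the double sum into the four regions $F_1 \times F_2$, $F_1 \times F_2^c$, $F_1^c \times F_2$, and $F_1^c \times F_2^c$. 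The finite/finite piece $F_1 \times F_2$ has both iterated limits equal termwise. Each mixed piece is a finite sum in one variable of pairings of the form $\la R_{\pi, m}, \phi_{g_n} \ra$ (respectively $\la \phi_{f_m}, R_{\sigma, n} \ra$) with $R_{\pi, m}, R_{\sigma, n} \in VN(G)$; norm-convergence of these operators (arising from pointwise Fourier convergence over the finite index) combined with the $\limsup$ control on $\|\TOm(\pi, \sigma)\|$ in the free variable forces both iterated limits of the mixed pieces to coincide. The infinite/infinite piece $F_1^c \times F_2^c$ is bounded uniformly in $m, n$ by $O(\epsilon)$ via the block estimate together with the choice of $F_1, F_2$. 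Sending $\epsilon \to 0$ gives equality of the two iterated limits.

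The main obstacle is the careful tail bookkeeping: the hypothesis has the asymmetric ``$\lim \limsup$'' form rather than uniform two-variable decay, and both asymmetric hypotheses are needed, one per iterated-limit order. Arens regularity of all even duals then follows from the standard fact that a Banach algebra is Arens regular if and only if its bidual is (see e.g.~\cite[Proposition 2.6.16]{DL}), applied inductively.
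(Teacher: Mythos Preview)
Your approach via Pym's iterated-limit criterion is natural, but there are two genuine gaps.

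\textbf{The tail estimate on $F_1^c\times F_2^c$ fails.} The hypothesis gives only \emph{iterated} decay, not uniform two-variable decay on a cofinite~$\times$~cofinite region. Concretely: if $\widehat{G}$ is identified with $\N$ and $\|\TOm(n,n)\|=1$ while $\|\TOm(m,n)\|=0$ for $m\neq n$, then both iterated limits in the hypothesis vanish, yet for every choice of finite $F_1,F_2$ the region $F_1^c\times F_2^c$ contains diagonal points where $\|\TOm\|=1$. Your ``choice of $F_1,F_2$'' therefore does not justify the $O(\epsilon)$ bound. The paper uses an \emph{asymmetric three-region} split $E\times\widehat{G}$, $E^c\times F$, $E^c\times F^c$ with the inner finite set $F=F(\pi)$ depending on $\pi$; this matches the shape of the hypothesis and, crucially, it is coupled with the next point.

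\textbf{The even-duals step is not valid as stated.} Arens regularity of $A$ does \emph{not} in general imply Arens regularity of $A^{**}$; only the converse is automatic (a subalgebra of an Arens regular algebra is Arens regular). The paper does not argue inductively from $A(G,\om)$ to its bidual. Instead it proves the stronger fact that both Arens products \emph{vanish} on $C^*_r(G,\om^{-1})^\perp$, and then uses the decomposition
\[
A(G,\om)^{(2n)} \;=\; A(G,\om)\;\oplus\;\bigoplus_{i=0}^{n-1}\bigl[C^*_r(G,\om^{-1})^\perp\bigr]^{(2i)}
\]
to conclude Arens regularity of every even dual at once. This vanishing is exactly what makes the three-region split work: for $\Phi,\Psi\in C^*_r(G,\om^{-1})^\perp$ one has $\widehat{f_\alpha}(\pi)\to 0$ and $\widehat{g_\beta}(\sigma)\to 0$ for each fixed $\pi,\sigma$, so the finite-index pieces $\Xi(E,\widehat{G})$ and $\Xi(E^c,F)$ are killed by pointwise convergence to zero rather than merely shown to have coinciding iterated limits. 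Your framework, which treats arbitrary bounded sequences, lacks this vanishing and hence cannot dispose of the mixed regions or reach the even-duals conclusion.

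In short, to repair the argument you should work in the annihilator $C^*_r(G,\om^{-1})^\perp$ and aim to show $\Phi\,\bo\,\Psi=\Phi\diamond\Psi=0$ there; this is both what the iterated-decay hypothesis is calibrated for and what delivers the full statement about all even duals.
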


\begin{proof}
Since $A(G,\om)^{**}=A(G,\om)\oplus C^*_r(G, \om^{-1})^\bot$, it suffices to show that
\begin{equation}\label{Eq:Split-dual}
\Phi \bo \Psi =\Phi \diamond  \Psi =0 \ \ (\Phi,\Psi \in C^*_r(G, \om^{-1})^\bot).
\end{equation}
To see this, first note that, for all $n\in \N$,
	$$A(G,\om)^{(2n)}=A(G,\om)\oplus \bigoplus_{i=0}^{n-1} [C^*_r(G, \om^{-1})^\bot]^{(2i)},$$
where $X^{(m)}$, $m\ge 1$ implies $m$-th dual of a Banach space $X$.
Thus if (\ref{Eq:Split-dual}) holds, then both Arens products vanishes on $\bigoplus_{i=0}^{n-1} [C^*_r(G, \om^{-1})^\bot]^{(2i)}$,
which implies that $A(G,\om)^{(2n-2)}$ is Arens regular.
We will now prove (\ref{Eq:Split-dual}). Suppose that
$W$ is the central weight on $\G$ defined in Definition \ref{D:weight-non abelian-central}.
Let $\Phi, \Psi \in C^*_r(G, \om^{-1})^\bot$ with norm 1.
Using the identification (\ref{Eq:weight-A(G,W)}), take two nets $\{f_\alpha \}$ and $\{g_\beta\}$ in $A(G)$
with $\|f_\alpha \|_{A(G)}, \|g_\beta\|_{A(G)}\leq 1$
such that $W^{-1}f_\alpha \to \Phi$ and $W^{-1}g_\beta \to \Psi$ in $w^*$-topology of $VN(G,W^{-1})$.
Let $A\in VN(G)$ with $\|A\|_{VN(G)}\leq 1$. Then, by (\ref{modified-co-multiplication}),
\begin{align*}
\lefteqn{\la \Phi \bo \Psi,\; AW \ra}
\\ &= \lim_{\alpha} \lim_{\beta} \la (W^{-1}f_\alpha)\cdot_{A(G,W)} (W^{-1}g_\beta),\; AW \ra
\\ &= \lim_{\alpha} \lim_{\beta}
\la (W^{-1} \otimes W^{-1})(f_\alpha \otimes g_\beta),\;  \Gamma^W(AW) \ra
\\ &= \lim_{\alpha} \lim_{\beta}
\la f_\alpha \otimes g_\beta, \; \widetilde{\Gamma}\circ \Phi^{-1}(AW) \ra
\\ &= \lim_{\alpha} \lim_{\beta}
\la f_\alpha \otimes g_\beta, \; \Gamma(A)\Gamma(W)(W^{-1} \otimes W^{-1}) \ra
\\ &= \lim_{\alpha} \lim_{\beta}
\sum_{\pi \in \G} \sum_{\sg \in \G} d_\pi d_\sg
 \text{tr}[(\widehat{f}_\alpha(\pi)\otimes \widehat{g}_\beta(\sg))
\Gamma(A)(\pi,\sigma)\TOm(\pi, \sigma)].
\end{align*}
Now let $\epsilon>0$. By hypothesis, there is a finite set $E$ in $\G$
such that for every $\pi \in E^c:=\G \setminus E$, there is a finite set $F$ (depending on $\epsilon$ and $\pi$) in $\G$ for which we have:
\begin{equation}\label{Eq:epsilon}
\|\TOm(\pi, \sigma) \|_{M_{d_\pi}\otimes M_{d_\sigma}} \leq \epsilon \ \ \ (\sg \in F^c).
\end{equation}
Now for every $\A,\B\subset \G$, let
$$ \Xi(\A,\B)=\sum_{\pi \in \A} \sum_{\sg \in \B} d_\pi d_\sg
|\text{tr}[(\widehat{f}_\alpha(\pi)\otimes \widehat{g}_\beta(\sg))\Gamma(A)(\pi,\sigma)\TOm(\pi, \sigma)]|.$$
Then, for every $\alpha$ and $\beta$,
\begin{eqnarray*}
|\la f_\alpha \otimes g_\beta, \; \Gamma(A) \Gamma(W)(W^{-1} \otimes W^{-1})  \ra|
&\leq &  \Xi(\G,\G)
\\ &=&  \Xi(E,\G)
+  \Xi(E^c,F)
+ \Xi(E^c,F^c).
\end{eqnarray*}
We will show that
$$ \lim_{\alpha} \limsup_{\beta}\ \Xi(E,\G)= \lim_{\alpha} \lim_{\beta}\ \Xi(E^c,F)=0 \ \text{and}\ \Xi(E^c,F^c)\leq \epsilon.$$
We have
\begin{eqnarray*}
\Xi(E,\G) &\leq& \sum_{\pi \in E} d_\pi \|\widehat{f}_\alpha(\pi) \|_1 \|g_\beta\|_{A(G)}
\|\Gamma(A)\| \| \TOm \|
\\ &\leq & \sum_{\pi \in E} d_\pi \|\widehat{f}_\alpha(\pi) \|_1
\\ &\leq & \sum_{\pi \in E} d_\pi^{3/2}\sum_{i,j=1}^{d_\pi} |\widehat{f}_\alpha(\pi)_{ij}|^2,
\end{eqnarray*}
where $\widehat{f}_\alpha(\pi)=[\widehat{f}_\alpha(\pi)_{ij}]$.
Since $E$ does not depend on $\alpha$ and $\Phi \in C^*_r(G, \om^{-1})^\bot$,
for all $\pi\in E$ we have
\begin{equation}\label{Eq:Arens-first case}
\lim_{\alpha} \widehat{f}_\alpha(\pi)_{ij} = \lim_{\alpha} \la f_\alpha, \overline{\pi_{ij}} \ra
=\la \Phi,\; \overline{\pi_{ij}} \ra=0,
\end{equation}
where $\pi_{ij}$'s are the trigonometric polynomials defined in (\ref{eq:piij}).
%the standard matrix units in $M_{d_\pi}$.
Therefore
	$$\lim_{\alpha} \limsup_{\beta}\ \Xi(E,\G)=0$$
since $E$ is finite. For the second case, note that
\begin{eqnarray*}
\Xi(E^c,F) &\leq& \sum_{\sg \in F}  d_\sg \|f_\alpha\| \|\widehat{g}_\beta(\sg)\|_1
\|\Gamma(A)\| \| \TOm \|
\\ &\leq & \sum_{\sg \in F} d_\sg \|\widehat{g}_\beta(\sg)\|_1.
\end{eqnarray*}
Since $F$ does not depend on $n$ and $\Psi \in C^*_r(G, \om^{-1})^\bot$,
similar to (\ref{Eq:Arens-first case}),
we have
$$\lim_{\beta} \widehat{g}_\beta(\sg)=0$$
for all $\sg\in F$.
Hence, because of finiteness of $F$,
$$ \lim_{\alpha} \lim_{\beta}\ \Xi(E^c,F)=0.$$
Finally
\begin{eqnarray*}
\Xi(E^c,F^c) &\leq& \sum_{\pi \in E^c} \sum_{\sg \in F^c} d_\pi d_\sg \|\widehat{f}_\alpha(\pi) \|_1
\|\widehat{g}_\beta(\sg)\|_1 \|\Gamma(A)\| \| \TOm(\pi, \sigma) \|
\\ &\leq &\epsilon \sum_{\pi \in E^c}  d_\pi \|\widehat{f}_\alpha(\pi) \|_1
\sum_{\sg \in F^c} d_\sg \|\widehat{g}_\beta(\sg)\|_1,
\end{eqnarray*}
where the last inequality follows from (\ref{Eq:epsilon}).
Thus, again since $\|f_\alpha\|_{A(G)}$ and $\|g_\beta\|_{A(G)} \leq 1$ we have
$$ \Xi(E^c,F^c)\leq \epsilon.$$
since $\epsilon$ was arbitrary, it follows that
$$\lim_{\alpha} \lim_{\beta}\  \la f_\alpha \otimes g_\beta, \;  \Gamma(A) \Gamma(W)(W^{-1} \otimes W^{-1} \ra=0.$$
Hence this shows that the first Arens product vanishes on  $C^*_r(G, \om^{-1})^\bot$ and we are done! The proof for the second Arens product is similar.
\end{proof}

\begin{cor}\label{C:weight-Arens regular-Simple Lie group}
Let $G$ be a compact, connected, simple Lie group, and let $\sg_a$ be the central weight on $\G$ defined in \eqref{Eq:weight-non-abelian-ln}.
Then $A(G,\sg_a)$ is Arens regular if $a> 0$.
\end{cor}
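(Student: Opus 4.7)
The plan is to verify the hypotheses of Theorem \ref{T:weight-Arens reg-positive} for the central weight $\sg_a$. From (\ref{Eq:bdd co prod-compact}) applied to $W = \bigoplus_{\pi\in \G}\sg_a(\pi)1_{M_{d_\pi}}$, for each pair $\pi,\sg\in \G$ the block $\TOm(\pi,\sg)$ decomposes along the decomposition $\pi\otimes\sg \cong \bigoplus_{\tau \in \supp(\pi\otimes\sg)} \tau$ as
\[
\TOm(\pi,\sg) = \bigoplus_{\tau \in \supp(\pi\otimes\sg)} \frac{\sg_a(\tau)}{\sg_a(\pi)\sg_a(\sg)}\, 1_{M_{d_\tau}},
\]
so that
\[
\norm{\TOm(\pi,\sg)}_{M_{d_\pi}\otimes M_{d_\sigma}} = \max_{\tau \in \supp(\pi\otimes \sg)} \frac{(1+\ln d_\tau)^a}{(1+\ln d_\pi)^a(1+\ln d_\sg)^a}.
\]

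Since any $\tau \in \supp(\pi\otimes\sg)$ satisfies $d_\tau \le d_\pi d_\sg$, we have $1+\ln d_\tau \le 1+\ln d_\pi + \ln d_\sg$, hence
\[
\norm{\TOm(\pi,\sg)} \le \frac{1}{(1+\ln d_\sg)^a}\left(\frac{1+\ln d_\pi + \ln d_\sg}{1+\ln d_\pi}\right)^a.
\]
As argued in the proof of Corollary \ref{C:weight-Op amen-simple Lie group} (citing \cite[Lemma 9.1]{Rid}), for a compact connected simple Lie group there are only finitely many elements of $\G$ of any given dimension, so $d_\pi \to \infty$ as $\pi\to\infty$ in the discrete topology.

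Therefore, fixing $\sg$ and letting $\pi\to\infty$, the factor $\bigl((1+\ln d_\pi+\ln d_\sg)/(1+\ln d_\pi)\bigr)^a \to 1$, so that
\[
\limsup_{\pi\to\infty}\norm{\TOm(\pi,\sg)} \le \frac{1}{(1+\ln d_\sg)^a},
\]
which tends to $0$ as $\sg\to\infty$ provided $a>0$. By symmetry of the above bound in $\pi$ and $\sg$, the other iterated limit also vanishes. Theorem \ref{T:weight-Arens reg-positive} then yields Arens regularity of $A(G,\sg_a)$.

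The only mildly delicate point is verifying the dimension estimate $d_\tau \le d_\pi d_\sg$ for components of a tensor product, which is immediate, and invoking the fact that irreducibles of a compact connected simple Lie group have dimensions tending to infinity — otherwise the limits would not collapse. Everything else is routine manipulation of the explicit formula for $\TOm$ in the central case.
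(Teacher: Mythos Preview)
Your proof is correct and follows essentially the same route as the paper: write $\TOm(\pi,\sg)$ explicitly using \eqref{Eq:bdd co prod-compact}, bound the numerator via $d_\tau\le d_\pi d_\sg$, invoke \cite[Lemma 9.1]{Rid} to get $d_\pi\to\infty$, and apply Theorem~\ref{T:weight-Arens reg-positive}. The only difference is cosmetic: the paper uses the additive estimate $\sg_a(\tau)\le \sg_a(\pi)+\sg_a(\rho)$ to obtain $\|\TOm(\pi,\rho)\|\le (1+\ln d_\pi)^{-a}+(1+\ln d_\rho)^{-a}$ directly, whereas you factor asymmetrically and take the iterated limit in two steps; your version has the minor advantage of being transparently valid for all $a>0$ without needing to split into the cases $a\le 1$ and $a>1$.
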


\begin{proof}
Let $\Theta_a$ be the corresponding operator defined in Definition \ref{D:Weight-Arens reg-operator}
associated to the weight $\sg_a$. For every $\pi,\rho \in \G$, we have
$$\Theta_a(\pi, \rho)=
\bigoplus_{k=1}^m \frac{\sg_a(\tau_k)}{\sg_a{(\pi)}\sg_a{(\rho})}1_{B(H_{\tau_k})},$$
where $\pi \otimes \rho \cong \bigoplus_{k=1}^m \tau_k$ is the irreducible decomposition
of $\pi \otimes \rho$. Since for each $1\leq k \leq m$, $d_{\tau_k} \leq d_\pi d_\rho$,
it follows that
\begin{eqnarray*}
\sg_a(\tau_k) &=& (1+ \ln d_{\tau_k})^a
\\ &\leq & (1+ \ln d_{\pi})^a + (1+ \ln d_{\rho})^a.
\\ &=& \sg_a{(\pi)} + \sg_a{(\rho}).
\end{eqnarray*}
Thus
\begin{equation}\label{Eq:Arens-simple Lie}
\| \Theta_a(\pi, \rho) \| \leq \frac{1}{(1+ \ln d_{\pi})^a}+ \frac{1}{(1+ \ln d_{\sigma})^a}.
\end{equation}
On the other hand, as it was pointed out in the proof of
Corollary \ref{C:weight-Op amen-simple Lie group}, $d_\pi \to \infty$ as $\pi\to \infty$.
Therefore, from (\ref{Eq:Arens-simple Lie}), it follows that
$$ \lim_{\pi \to \infty} \limsup_{\rho \to \infty} \norm{\Theta_a(\pi, \rho)}
= \lim_{\rho \to \infty} \limsup_{\pi \to \infty} \norm{\Theta_a(\pi, \rho)}=0.$$
Therefore $A(G,\sg_a)$ is Arens regular by Theorem \ref{T:weight-Arens reg-positive}.
\end{proof}

The preceding example dealt with Beurling-Foureir algebras on certain Lie groups.
We can also construct Arens regular Beurling-Fourier algebra on non-abelian totally disconnected
groups. We recall that, for each $n\in \N$, the special linear group $SL(2,2^n)$ denotes the set of
all 2$\times$2 matrix with the determinate 1 on a finite field of $2^n$ elements.
It is well-known that $SL(2,2^n)$ is a finite simple group (see \cite[Section 2.7]{C}).

\begin{cor}
Let $G=\Pi_{n=1}^\infty SL(2,2^n)$,
and let $\sg_a$ be the central weight on $\G$ defined in \eqref{Eq:weight-non-abelian-ln}.
Then $A(G,\sg_a)$ is Arens regular if $a > 0$.
\end{cor}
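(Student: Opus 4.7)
The plan is to reduce the statement to a direct application of Theorem~\ref{T:weight-Arens reg-positive}, following the template of Corollary~\ref{C:weight-Arens regular-Simple Lie group}. What must be supplied are two ingredients: (a) that $d_\pi \to \infty$ as $\pi \to \infty$ in $\widehat{G}$, and (b) the bound
\[
\|\TOm(\pi, \rho)\| \ \leq\ C_a\Big(\frac{1}{\sg_a(\pi)} + \frac{1}{\sg_a(\rho)}\Big),
\]
for some constant $C_a$ depending only on $a$.

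Ingredient (b) is a direct adaptation of the Lie group case. If $\pi\otimes\rho \cong \bigoplus_k \tau_k$ then $d_{\tau_k} \leq d_\pi d_\rho$, so $1 + \ln d_{\tau_k} \leq (1+\ln d_\pi) + (1+\ln d_\rho)$. Raising to the $a$-th power and using the elementary inequalities $(x+y)^a \leq x^a + y^a$ for $0<a\leq 1$ and $(x+y)^a \leq 2^{a-1}(x^a + y^a)$ for $a>1$, one obtains $\sg_a(\tau_k) \leq C_a(\sg_a(\pi) + \sg_a(\rho))$. Since by \eqref{Eq:bdd co prod-compact} we have $\|\TOm(\pi,\rho)\| = \max_k \sg_a(\tau_k)/[\sg_a(\pi)\sg_a(\rho)]$, dividing yields (b).

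Ingredient (a) is the one genuinely new input. By Remark~\ref{rem-central}(3), every $\pi \in \widehat{G}$ is of the form $(\pi_i)_{i\in F}$ for a finite subset $F \subset \N$ and non-trivial irreducibles $\pi_i \in \widehat{SL(2,2^i)}$ for $i\in F$, with $d_\pi = \prod_{i\in F} d_{\pi_i}$. The key representation-theoretic fact is that the non-trivial irreducibles of $SL(2,q)$ with $q=2^i$ and $i\geq 2$ have dimensions in $\{q-1,q,q+1\}$, so that $d_{\pi_i}\geq 2^i-1\geq 3$ whenever $i\geq 2$. Hence for every $M>0$ the set $\{\pi\in\widehat{G}:d_\pi\leq M\}$ is finite: the bound $\prod_{i\in F,\, i\geq 2}(2^i-1)\leq M$ forces both $|F|$ and $\max F$ to be bounded in terms of $M$, and for each admissible $F$ there are only finitely many $(\pi_i)_{i\in F}$ because each $\widehat{SL(2,2^i)}$ is finite.

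Combining (a) and (b), $\sg_a(\pi)=(1+\ln d_\pi)^a\to\infty$ as $\pi\to\infty$, so $\|\TOm(\pi,\rho)\|\to 0$ in each iterated limit, and Theorem~\ref{T:weight-Arens reg-positive} delivers the Arens regularity of $A(G,\sg_a)$. The only real obstacle is the combinatorial counting in (a), which rests on the specific growth rate of the minimal non-trivial irreducible dimensions of $SL(2,2^i)$; once that is in place the argument is a faithful transcription of Corollary~\ref{C:weight-Arens regular-Simple Lie group}.
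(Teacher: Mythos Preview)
Your proposal is correct and follows essentially the same route as the paper: bound $\|\TOm(\pi,\rho)\|$ exactly as in Corollary~\ref{C:weight-Arens regular-Simple Lie group}, invoke the lower bound $d_{\pi_i}\geq 2^i-1$ for non-trivial irreducibles of $SL(2,2^i)$ to force $d_\pi\to\infty$, and apply Theorem~\ref{T:weight-Arens reg-positive}. Your treatment is in fact slightly more careful than the paper's in two places: you introduce the constant $C_a$ to cover the case $a>1$ (the paper writes the inequality $(x+y)^a\le x^a+y^a$ without restriction on $a$, which is literally false for $a>1$ though harmless for the conclusion), and you spell out the counting argument for ingredient~(a), including the separate handling of the factor $i=1$ where $2^i-1=1$ gives no information.
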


\begin{proof}
Let $\Theta_a$ be the corresponding operator defined in Definition \ref{D:Weight-Arens reg-operator} associated to the weight $\sg_a$.
Similarly to the proof of Corollary \ref{C:weight-Arens regular-Simple Lie group}, we have
$$\| \Theta_a(\pi, \rho) \| \leq \frac{1}{(1+ \ln d_{\pi})^a}+
\frac{1}{(1+ \ln d_{\sigma})^a} \ \ \ (\pi,\rho \in \G).$$
However it is shown in \cite[Section 2.7]{C} that every non-trival continuous irreducible
unitary representation of $SL(2,2^n)$ has dimension at least $2^n-1$. Thus $d_\pi \to \infty$
as $\pi \to \infty$ on $\G$. Hence
$$ \lim_{\pi \to \infty} \limsup_{\rho \to \infty} \norm{\Theta_a(\pi, \rho)}
= \lim_{\rho \to \infty} \limsup_{\pi \to \infty} \norm{\Theta_a(\pi, \rho)}=0.$$
Therefore $A(G,\sg_a)$ is Arens regular by Theorem \ref{T:weight-Arens reg-positive}.
\end{proof}

We finish this section with the following theorem that presents examples of non-Arens regular Beurling-Fourier algebras on non-abelian compact groups.

\begin{thm}
Let $\{G_i \}_{i\in I}$ be an infinite family of non-trivial compact groups,
and let, for each $i\in I$, $\om_i$ be a central weight on $\G_i$.
Let $G=\prod_{i\in I} G_i$ and $\om=\prod_{i\in I} \om_i$.
Suppose further that $\om$ is bounded away from zero. Then $A(G,\om)$ is not Arens regular.
\end{thm}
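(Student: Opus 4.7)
The strategy is to exploit the fact (Section 2.4) that Arens regularity passes to quotients, and reduce the statement to the classical weighted convolution algebra setting. Using that $I$ is infinite, fix distinct indices $i_1, i_2, \ldots \in I$ and non-identity elements $h_n \in G_{i_n}$; let $H_n := \overline{\la h_n\ra}$, a non-trivial compact abelian subgroup of $G_{i_n}$, and set
\[ H := \prod_n H_n \times \prod_{i \notin \{i_n\}} \{e_i\}, \]
a closed abelian subgroup of $G$. By Proposition \ref{P:weight-subgroup}, the restriction map induces a complete quotient map from $A(G, \om)$ onto $A(H, \om_H)$, so it suffices to show that $A(H, \om_H)$ is not Arens regular.

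Since $H$ is compact abelian, the Fourier transform identifies $A(H, \om_H)$ isometrically with the weighted convolution algebra $\ell^1(\hH, \om_H)$, where $\hH = \bigoplus_n \hH_n$ is an infinite countable discrete abelian group (each $\hH_n$ is non-trivial by Pontryagin duality). The key technical identity, derived from the product structure $\om = \prod_i \om_i$ and the tensor-product description of the irreducible representations of $\prod_i G_i$, is that the infimum defining $\om_H$ is attained by a ``product extension'':
\[ \om_H(\chi) = \prod_n (\om_{i_n})_{H_n}(\chi_n) \qquad \text{for } \chi = \sum_n \chi_n \in \hH. \]
Consequently $\om_H(\chi + \chi') = \om_H(\chi)\om_H(\chi')$ whenever $\chi, \chi' \in \hH$ have disjoint supports.

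With this multiplicative identity I then produce the failure of Arens regularity directly. For each $n$ fix a non-trivial $\chi_n \in \hH_n$ and set
\[ a_n := \delta_{\chi_{2n}}/\om_H(\chi_{2n}), \qquad b_m := \delta_{\chi_{2m-1}}/\om_H(\chi_{2m-1}), \]
two norm-one bounded sequences in $\ell^1(\hH, \om_H)$. Define $F \in \ell^\infty(\hH, 1/\om_H) = \ell^1(\hH, \om_H)^*$ by
\[ F(\chi) := \begin{cases} \om_H(\chi), & \text{if } \chi = \chi_{2n} + \chi_{2m-1} \text{ with } n < m, \\ 0, & \text{otherwise;} \end{cases} \]
the elements $\chi_{2n} + \chi_{2m-1}$ have distinct supports $\{2n, 2m-1\}$, so $F$ is well-defined, and $|F|/\om_H \leq 1$. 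Using the multiplicativity of $\om_H$ on disjoint supports, one computes $\la F, a_n * b_m \ra = 1$ for $n < m$ and $0$ for $n \geq m$, so that
\[ \lim_n \lim_m \la F, a_n * b_m \ra = 1 \neq 0 = \lim_m \lim_n \la F, a_n * b_m \ra. \]
Hence $\ell^1(\hH, \om_H)$, and therefore $A(G, \om)$, fails to be Arens regular.

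The main obstacle is the multiplicative identity $\om_H(\chi + \chi') = \om_H(\chi)\om_H(\chi')$ on disjoint-support characters. One must show that the infimum $\om_H(\chi) = \inf\{\om(\tilde\pi) : \tilde\pi \text{ extends } \chi\}$ factors coordinate-wise, which requires using the tensor decomposition of $\tilde\pi|_H$ for $\tilde\pi = \otimes_i \tilde\pi_i \in \G$ together with the normalization $\om_i(1) = 1$ implicit in the definition of the product weight $\om = \prod_i \om_i$. Once this identity is established, the rest of the proof is a standard asymmetric double-limit construction adapted to the product structure.
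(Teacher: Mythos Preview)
Your approach is correct but takes a genuinely different route from the paper's. The paper works directly inside $A(G,\om)$: after reducing to $I=\N$, it picks non-trivial $\pi_i\in\G_i$, sets $u_m(\tilde x)=\pi_{2m}(x_{2m})$ and $v_n(\tilde x)=\pi_{2n+1}(x_{2n+1})$ in $\G$, and uses the normalized characters $f_m=\chi_{u_m}/d_{u_m}$, $g_n=\chi_{v_n}/d_{v_n}$. The crucial observation there is that $u_m\otimes v_n$ is already irreducible (an external tensor product of representations from distinct factors), and the product weight is exactly multiplicative on such tensors, so $\TOm(u_m,v_n)=1_{M_{d_{u_m}}\otimes M_{d_{v_n}}}$; an asymmetric operator $A\in VN(G)$ then produces unequal iterated limits. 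Your route---restrict to an abelian subgroup $H=\prod_n H_n$ via Proposition~\ref{P:weight-subgroup} and run the classical double-limit argument in $\ell^1(\hH,\om_H)$---reduces to a well-understood abelian situation, at the cost of invoking the quotient proposition and analysing $\om_H$. The underlying mechanism (multiplicativity of the weight on ``independent'' pieces, plus an asymmetric functional) is the same in both proofs.

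One technical caution: your product formula $\om_H(\chi)=\prod_n(\om_{i_n})_{H_n}(\chi_n)$ need not hold as stated, because an extension $\tilde\pi$ of $\chi$ may include extra factors $\tilde\pi_j$ (from coordinates $j$ outside the support of $\chi$) with $\om_j(\tilde\pi_j)<1$, pushing the infimum strictly below the coordinate-wise product. What you actually use, and what does hold, is the weaker identity $\om_H(\chi+\chi')=\om_H(\chi)\,\om_H(\chi')$ for disjoint-support characters: the inequality $\le$ is submultiplicativity of $\om_H$, and $\ge$ follows by taking any extension $\tilde\pi=\bigotimes_{i\in F}\tilde\pi_i$ of $\chi+\chi'$ and splitting $F$ into $\{i_n:n\in\supp\chi\}$ and its complement in $F$, which yields extensions of $\chi$ and of $\chi'$ whose $\om$-values multiply to $\om(\tilde\pi)$. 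With this adjustment your argument goes through.
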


\begin{proof}
If $J\subseteq I$, then it is clear that $A(\prod_{i\in J} G_i, \prod_{i\in J} \om_i)$ is a quotient of $A(G,\om)$.
Thus it suffices to prove the statement of the theorem when $I$ is infinite and countable. So we assume
that $I=\N$. For each $i\in \N$, let $\pi_i\in \G_i$ be a non-trivial representation. For each $m,n\in \N$,
let $u_m$ and $v_n$ be elements of $\G$ defined by
$$u_m(\widetilde{x})=\pi_{2m}(x_{2m}) \ \ , \ \ v_n(\widetilde{x})=\pi_{2n+1}(x_{2n+1}) \ \ \ \ (\widetilde{x}=\{x_i\}_{i\in \N}\in G).$$
We have
\begin{equation}\label{Eq:non Arens-product}
u_m\otimes v_n=
              \begin{cases}
              \pi_{2m}\times \pi_{2n+1} & \text{if}\; 2m < 2n+1\\
             \pi_{2n+1}\times \pi_{2m}  & \text{if}\; 2m > 2n+1.
              \end{cases}
\end{equation}
In particular, $\{ u_m\otimes v_n \}_{m,n\in \N}$ are distinct elements of $\G$. Now let
$$f_m=\frac{1}{d_{u_m}} \chi_{u_m}\ \ \text{and}\ \ g_n=\frac{1}{d_{u_n}} \chi_{u_n},$$
where $\chi_\pi$ is the character of $\pi\in \G$ i.e. $\chi_\pi(x)=\text{tr}[\pi(x)]$.
Let $A\in VN(G)$ such that for every $\pi\in \G$,
\begin{equation*}\label{Eq:non Arens-product}
\F(A)(\pi)=
              \begin{cases}
              1_{M_{d_{u_m}}\otimes M_{d_{v_n}}} & \ \ \text{if}\ \pi=u_m \otimes v_n \ \text{and}\ 2m < 2n+1,\\
              0  & \ \ \text{otherwise}.
              \end{cases}
\end{equation*}
Suppose that
$W$ is the central weight on $\G$ defined in Definition \ref{D:weight-non abelian-central}.
Then, by Definition \ref{D:Weight-Arens reg-operator}, (\ref{Eq:co-multi}),
(\ref{Eq:bdd co prod-compact}), and (\ref{Eq:non Arens-product}), we have
\begin{eqnarray*}
\Theta(u_m, v_n) &=& [\Gamma(W)(W^{-1}\otimes W^{-1})] (u_m, v_n) \\
&=&  \bigoplus_{ \sigma \in\, \supp\, u_m\otimes v_n} \om(\sigma) \om(u_m)^{-1} \om(v_n)^{-1}\om(u_m\otimes v_n) 1_{M_{d_\sigma}} \\
&=& \om(u_m)^{-1} \om(v_n)^{-1}\om(u_m\otimes v_n)1_{M_{d_{u_m}}\otimes M_{d_{v_n}}}\\
&=& 1_{M_{d_{u_m}}\otimes M_{d_{v_n}}}.
\end{eqnarray*}
Thus,  by (\ref{modified-co-multiplication}), for every $m,n\in \N$,
\begin{eqnarray*}
& & \la (W^{-1}f_m)\cdot_{A(G,W)} (W^{-1}g_n),\; AW \ra
\\ &=& \la (W^{-1} \otimes W^{-1})(f_m \otimes g_n),\;  \Gamma^W(AW) \ra
\\ &=&
\la f_m \otimes g_m, \; \widetilde{\Gamma}\circ \Phi^{-1}(AW) \ra
\\ &=&
\la f_m \otimes g_n, \; \Gamma(A)\Gamma(W)(W^{-1} \otimes W^{-1}) \ra
\\ &=& d_{u_m} d_{v_n} \text{tr}[(\widehat{f}_m(u_m)\otimes \widehat{g}_n(v_n)
\Gamma{(A)}(u_m , v_n)\TOm(u_m , v_n)]
\\ &=&  \begin{cases}
              1                & \text{if}\; 2m < 2n+1\\
              0                & \text{if}\; 2m > 2n+1.
              \end{cases}
\end{eqnarray*}
Therefore the repeated limit of $(W^{-1}f_m)\cdot_{A(G,W)} (W^{-1}g_n),\; AW \ra$ exits but they are not equal. This shows that $A(G,\om)$ is not Arens regular.
\end{proof}

\begin{rem}
We finish this section by pointing out that the proof of the preceding theorem can be adapted,
with almost the same approach, to show that if $\{G_i \}_{i\in I}$ is an infinite family of non-trivial compact groups and
if $\om_a$ is the weight (\ref{Eq:weight-non-abelian-dim}) on the dual of $\prod_{i\in I}G_i$,
then $A(G,\om_a)$ is not Arens regular for any $a \geq 0$.
\end{rem}

\section{The $2\times 2$ special unitary group}\label{S:weight-SU(2)}

In this section, we apply the results of the preceding section to study explicitly
the behavior of Beurling-Fourier algebras on $2\times 2$ special unitary group:
$$SU(2)=\{ A\in M_2(\C) \mid A \ \text{is unitary},\ \det A=1 \}.$$
First we make the following important observation which allows us to correspond various
central weights on $\widehat{SU(2)}$ to their restriction on $\Z$.

\subsection{Restriction of the weight on $\Z$}
Let $\om$ be a central weight on $\widehat{SU(2)}$. We can assume that $\T$ is a closed subgroup of $SU(2)$ by the identification
\[e^{it} \mapsto
\begin{array}{ll}
& \left[
\begin{array}{ll}
e^{it} & 0 \\
0 & e^{-it}
\end{array}
\right]
\end{array} \ \  (t\in [0,2\pi]).
\]
By the representation
theory of $SU(2)$ \cite[29.18 and 29.20]{HR2},
$$\widehat{SU(2)}=\{ \pi_l \mid l=0,1/2, 1, 3/2, \ldots \},$$
and $\text{dim}\, \pi_l=2l+1$. Moreover,
$$\pi_l (e^{it})=diag(e^{i2lt}, e^{i2(l-1)t},\ldots , e^{-i2lt}) \ \ \ \ (t\in [0,2\pi]).$$
Let $n\in \Z$, and let $\chi_n$ be the character on $\T$ defined by
$$\chi_n(e^{it})=e^{int}  \ \ \ \ (t\in [0,2\pi]).$$
Then $\pi_l$ is an extension of $\chi_n$ if and only if $|n|\leq 2l$. Hence if we identify $\Z$ with $\widehat{\T}$
through the Plancheral map $n \mapsto \chi_n$, we have
\begin{equation}\label{Eq:SU(2)-T}
	\om_\T(n)=\inf \{\om(\pi_l) \mid |n|\leq 2l \} \ \ \ (n\in \Z).
\end{equation}
This, in particular, implies that $A(\T,\om_\T)$ is a complete quotient
of $A(SU(2),\om)$ from Proposition \ref{P:weight-subgroup}. We will show in the following sections that $A(SU(2),\om)$ behave very similarly
to that of $A(\T, \om_\T)$.

\begin{exm}\label{E:weight-SU(2)}
Let $a \geq 0$ and $0 \leq b \leq 1$. We define the functions
$\sg_a$, $\om_a$, $\rho_b$ from $\widehat{SU(2)}$ into $[1,\infty)$ by
$$ \sg_a(\pi_l)=(1+\ln (2l+1))^a \ \ \ (\pi_l \in \widehat{SU(2)}),$$
$$ \om_a(\pi_l)= (2l+1)^a \ \ \ (\pi_l \in \widehat{SU(2)}),$$
$$ \rho_b(\pi_l)=e^{ (2l+1)^b} \ \ \ (\pi_l \in \widehat{SU(2)}).$$
Since $d_{\pi_l}=2l+1$, the first two weights are the one defined in Example \ref{E:weight-compact}.
Also we know from \cite[29.29]{HR2} that, for every $l,r=0,1/2, 1, 3/2, \ldots$,
\begin{equation}\label{Eq:Tensor formula}
\pi_l \otimes \pi_r \cong \pi_{|l-r|}\oplus \pi_{|l-r|+1} \oplus \cdots \oplus \pi_{|l+r|}=\bigoplus_{k=|l-r|}^{|l+r|} \pi_k.
\end{equation}
Therefore it is routine to verify that $\rho_b$ also defines a weight on $\widehat{SU(2)}$. Moreover, by
(\ref{Eq:SU(2)-T}), the restriction of the above weights on $\widehat{\T}=\Z$ corresponds to the following well-known weights on $\Z$:
$$ \sg'_a(n)=(1+\ln (1+|n|))^a \ \ \ (n\in \Z),$$
$$ \om'_a(n)= (1+|n|)^a \ \ \ (n\in \Z),$$
$$ \rho'_b(n)=e^{ (1+|n|)^b} \ \ \ (n\in \Z).$$
\end{exm}

\subsection{Operator amenability and weak amenability}

Let $ a \geq 0$ and $0 \leq b \leq 1$, and let $ \sg'_a$, $ \om'_a$, and
$\rho'_b$ be the weights on $\Z$ defined in Example \ref{E:weight-SU(2)}.
N. Gr{\o}nb{\ae}k has characterized in \cite{G1} and \cite{G2}
when either of $A(\T, \sg'_a)$, $A(\T, \om'_a)$, or  $A(\T,\rho'_b)$ is amenable
or weakly amenable. We summarized them below:

(i) $A(\T, \sg'_a)$ or $A(\T, \om'_a)$ is amenable if and only if $a=0$;

(ii) $A(\T,\rho'_b)$ is amenable if and only if $b=0$;

(iii) $A(\T, \sg'_a)$ is always weakly amenable;

(iv) $A(\T, \om'_a)$ has no non-zero continuous point
derivation at $0$ if and only if $0\leq a <1$;

(v)  $A(\T, \om'_a)$ is weakly amenable if and only if $0\leq a <1/2$;

(vi) $A(\T,\rho'_b)$ has non-zero continuous point
derivations at $0$ if $b>0$;

(vii) $A(\T,\rho'_b)$ is never weakly amenable unless $b=0$.\\
We will show in the following theorem that, in most cases, the analogous of these results holds
for the corresponding weights on $\widehat{SU(2)}$.

\begin{thm}
Let $a \geq 0$ and $0 \leq b \leq 1$, and let $ \sg_a$, $ \om_a$, and
$\rho_b$ be the weights on $\widehat{SU(2)}$ defined in Example \ref{E:weight-SU(2)}.
Then the following holds:\\
$(i)$ $A(SU(2), \sg_a)$ or $A(SU(2), \om_a)$ is operator amenable if and only if $a=0$;\\
$(ii)$ $A(SU(2),\rho_b)$ is  operator amenable if and only if $b=0$;\\
$(iii)$ $A(SU(2), \sg_a)$ has no non-zero continuous point derivation at $e$;\\
$(iv)$ $A(SU(2), \om_a)$ has no non-zero continuous point derivation at $e$ if $0\leq a <1$;\\
$(v)$ $A(SU(2), \om_a)$ is not operator weakly amenable if $a \geq 1/2$;\\
$(vi)$ $A(SU(2),\rho_b)$ is never  operator weakly amenable unless $b=0$.
\end{thm}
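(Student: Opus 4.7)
The plan is to separate the six assertions according to which criterion each requires. Parts (i)--(ii) concern operator amenability, parts (iii)--(iv) concern the absence of continuous point derivations, and parts (v)--(vi) concern operator weak amenability. Throughout I will use that $d_{\pi_l}=2l+1$, that $\overline{\pi_l}\cong \pi_l$ for every $l$, that iterating the Clebsch-Gordan rule (\ref{Eq:Tensor formula}) makes $\pi_{nl}$ the summand of largest dimension in $\pi_l^{\otimes n}$, and that via (\ref{Eq:SU(2)-T}) the restrictions of $\sg_a$, $\om_a$, $\rho_b$ to $\widehat{\T}=\Z$ are the classical weights $\sg'_a$, $\om'_a$, $\rho'_b$ of Example \ref{E:weight-SU(2)}.

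For (i) and (ii), when the weight is constant (the $a=0$ or $b=0$ cases) the algebra coincides with $A(SU(2))$ up to a rescaling of the norm and is operator amenable by \cite{Ruan}. For $\sg_a$ and $\om_a$ with $a>0$ the failure of operator amenability is exactly Corollary \ref{C:weight-Op amen-simple Lie group} applied to $SU(2)$. For $\rho_b$ with $b>0$ the symmetrization satisfies $\Om(\pi_l) = e^{2(2l+1)^b} \to \infty$, so Theorem \ref{T:weight-op. amen}(ii) rules out operator amenability.

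For (iii) and (iv), I will invoke Proposition \ref{P:weight-point der}. The monotonicity of $\sg_a$ and $\om_a$ in dimension combined with the observation above gives $n(\sg_a,\pi_l^{\otimes n}) = (1+\ln(2nl+1))^a$ and $n(\om_a,\pi_l^{\otimes n}) = (2nl+1)^a$. Dividing each by $n$ and sending $n\to\infty$ yields a zero infimum for $\sg_a$ at every $a\geq 0$ (giving (iii)) and for $\om_a$ precisely when $a<1$ (giving (iv)).

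For (v) and (vi), the plan is to use Proposition \ref{P:weight-subgroup} with $H=\T$ to produce the complete quotient map $R_\T : A(SU(2),\om) \twoheadrightarrow A(\T,\om_\T) = \ell^1(\Z,\om')$, then argue by contradiction. Assuming $A(SU(2),\om)$ is operator weakly amenable, any bounded derivation $D_0 : A(\T,\om_\T) \to A(\T,\om_\T)^*$ produces, via the composite $R_\T^*\circ D_0 \circ R_\T$, a completely bounded derivation $A(SU(2),\om) \to A(SU(2),\om)^*$: each factor is completely bounded and $R_\T^*$ is an $A(SU(2),\om)$-bimodule map for the pulled-back action, so the composition is itself a derivation. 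Commutativity of $A(SU(2),\om)$ collapses every inner derivation into the dual to zero, so operator weak amenability forces this composite to vanish; injectivity of $R_\T^*$ together with surjectivity of $R_\T$ then cascade to $D_0=0$. Hence $\ell^1(\Z,\om')$ admits no non-zero bounded derivation into its dual, contradicting Gr{\o}nb{\ae}k's classification precisely when $a \geq 1/2$ for $\om_a$ and when $b>0$ for $\rho_b$. The hard part will be the bimodule bookkeeping in this final step: verifying that $R_\T^*$ genuinely intertwines the two actions, that the operator space structure on $\ell^1(\Z,\om')$ inherited as the predual of a commutative von Neumann algebra is maximal (so classical and operator weak amenability agree there), and that complete boundedness is preserved through every composition.
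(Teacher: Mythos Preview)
Your proposal is correct and follows essentially the same route as the paper: Ruan/Theorem \ref{T:weight-op. amen} for (i)--(ii), Proposition \ref{P:weight-point der} with the Clebsch--Gordan bound $\max\{d_\tau:\tau\in\supp\,\pi_l^{\otimes n}\}=2nl+1$ for (iii)--(iv), and restriction to $\T$ combined with Gr{\o}nb{\ae}k's results for (v)--(vi). The only difference is that for (v)--(vi) the paper simply asserts that operator weak amenability of $A(SU(2),\om)$ forces weak amenability of the complete quotient $A(\T,\om_\T)$, whereas you unpack this hereditary step explicitly via $R_\T^*\circ D_0\circ R_\T$ and the maximal operator space structure on $\ell^1(\Z,\om')$; that expansion is correct and arguably makes the argument more self-contained.
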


\begin{proof}
(i) and (ii). If $a=b=0$, then these Beurling-Foureir algebras are
%either identical or completely algebraically isomorphic with
$A(SU(2))$. Thus the result follows from \cite{Ruan}.
On the other hand, if $a, b >0$, then
$$\lim_{\pi_l \to \infty}  \sg_a(\pi_l)=\lim_{\pi_l \to \infty}  \om_a(\pi_l)=\lim_{\pi_l \to \infty}  \rho_\alpha(\pi_l)
=\infty.$$
Therefore by Theorem \ref{T:weight-op. amen}, neither of $A(SU(2), \sg_a)$, $A(SU(2), \om_a)$,
nor $A(SU(2),\rho_b)$ is operator amenable.\\
(iii) and (iv). It follows from the tensor formula (\ref{Eq:Tensor formula}) and Schur orthogonality relation that the conjugate of any representation $\pi_l$ is itself.  Moreover, for every $n\in \N$, and $\pi_l \in \widehat{SU(2)}$, we have
$$n(\sg_a,\pi_l^{\otimes n})\leq (1+\ln n + \ln (2l+1))^a \ \ , \ \ n(\om_a,\pi_l^{\otimes n})\leq [n(2l+1)]^a.$$
Therefore
$$\inf \{ \frac{n(\sigma_a, \pi_l^{\otimes n})}{n} \mid n\in \N \}=0$$
for all $a \geq 0$ and
$$\inf \{ \frac{n(\omega_a, \pi_l^{\otimes n})}{n} \mid n\in \N \}=0$$
when $0\leq a < 1$. Thus the results follow from Proposition \ref{P:weight-point der}.\\
(v) and (vi). As it was pointed out in Example \ref{E:weight-SU(2)},
$$A(SU(2), \sg_a){|_\T}=A(\T, \sg'_a) \ \ \text{and} \ \ A(SU(2), \om_a){|_\T}=A(\T, \om'_a). $$
Hence operator weak amenability of $A(SU(2), \sg_a)$ and $A(SU(2), \om_a)$ implies
the weak amenability of $A(\T, \sg'_a)$ and $A(\T, \om'_a)$, respectively.
Thus it follows from \cite{G1} that $A(SU(2), \om_a)$ is operator weakly amenable only if $0\leq a <1/2$
and $A(SU(2),\rho_b)$ is never operator weakly amenable unless $b=0$.
\end{proof}

\subsection{Connection with the amenability of $A(SU(2))$}\label{S:connection amen A(SU(2))}

B. E. Johnson in his memoirs \cite{J2} in 1972 introduced the concept of an amenable
Banach algebra and proved his famous theorem: the group algebra $L^1(G)$ is amenable
if and only if $G$ is amenable. It was believed that similar conclusion holds for
the Fourier algebra $A(G)$ since $A(G)$ acts in lots of cases like a dual of $L^1(G)$.
However it was Johnson himself who proved a remarkable result that $A(SU(2))$ is not amenable
\cite{J1}. Shortly after Ruan showed in \cite{Ruan} that the amenability of $G$ corresponds exactly
to the ``operator amenability" of $A(G)$ which led to the several applications of operator spaces
to harmonic analysis. Later on, Forrest and Runde \cite{FR} settled the question of amenability for the Foureir algebras: $A(G)$ is amenable if and only if $G$ has an abelian subgroup of finite index.

We would like to analyze the non-amenability of $A(SU(2))$ and show its connection
with the Beurling-Fourier algebras on $\widehat{SU(2)}$ and the classical Beurling algebras
on $\Z$. Johnson used the properties of the algebra $A_\gamma(G)$ defined in (\ref{Eq:A-DEl-formula}) in a very clear way to obtain his result. His approach was widely generalized and studied in \cite{FSS1}.
By \cite[Theorem 3.2]{J1} (see also \cite[Corollary 1.5]{FSS1}) the amenability of $A(G)$ implies
that the maximal ideal $\{f\in A_\gamma(SU(2)) \mid f(e)=0 \}$ of $A_\gamma(G)$ has a bounded
approximate identity.
However $A_\gamma(G)$ is nothing but the Beurling-Fourier algebra $A(G,\om_1)$ in (\ref{Eq:weight-non-abelian-dim}) where
$\om_1(\pi_l)=1+2l$ for all $\pi_l\in \widehat{SU(2)}$ (see (\ref{Eq:norm-gamma})).
Since, by Example \ref{E:weight-SU(2)}, the restriction of $A(SU(2),\om_1)$ on $\T$ is $A(\T, \om'_1)$,
this implies that $\{f\in A(\T, \om'_1) \mid f(e)=0 \}$ has a bounded approximate identity.
Hence by Theorem \ref{T:Amen-weak Amen-Delta}, $A(\T, \om'_1)$ is amenable which is impossible
by \cite{G2}. This argument can also be applied to the question of weak amenability because again
by a similar argument, the weak amenability of $A(SU(2))$ implies that $A(\T, \om'_{1/2})$ is weakly amenable
which is shown to fail in \cite{G1}.

As we see, the preceding arguments shows that the (weak) amenability of the Fourier algebra $A(SU(2))$
is closely related to the (weak) amenability of a well-known Beurling algebra on $\Z$ and it has inherit
connection. That is why $A(SU(2))$ fails to be amenable or even weakly amenable because the Beurling
algebras are known not to behave well with regard to cohomology.
We believe that these connections are non-trivial and certainly worthwhile investigating more.
For example, it is shown in \cite{FSS2} that $A(G)$
is not weakly amenable if $G$ is compact, connected, and non-abelain. If we assume further that $G$
is a Lie group, then again amenability or weak amenability of $A(G)$ relates closely to the behavior
of certain Beurling algebras on a maximal torus of $G$. By investigating more this relation,
we might be able to have a better understanding of the structure of Fourier algebras on compact Lie groups.

\subsection{Arens regularity}\label{S:Arens regularity-SU(2)}

Let $a \geq 0$ and $0 \leq b \leq 1$, and let $ \sg'_a$, $ \om'_a$, and
$\rho'_b$ be the weights on $\Z$ defined in Example \ref{E:weight-SU(2)}. As it is shown
in \cite[Theorem 8.11]{DL},  $A(\T, \sg'_a)$, $A(\T, \om'_a)$, or  $A(\T,\rho'_b)$ are Arens regular for $a, b >0$.
We will show in the following theorem that the exact analogous of these results holds
for the corresponding weights on $\widehat{SU(2)}$.

\begin{thm}\label{T:weight-SU(2)-Arens regular}
Let $a \geq 0$ and $0 \leq b \leq 1$, and let $ \sg_a$, $ \om_a$, and
$\rho_b$ be the weights on $\widehat{SU(2)}$ defined in Example \ref{E:weight-SU(2)}.
Then:\\
$(i)$ $A(SU(2), \sg_a)$ or $A(SU(2), \om_a)$ is Arens regular if and only if $a >0$,\\
$(ii)$ $A(SU(2),\rho_b)$ is Arens regular if and only if $b>0$.
\end{thm}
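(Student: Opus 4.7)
My plan is to split the theorem into its ``only if'' and ``if'' directions, handling all three weights in parallel.

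\textbf{Necessity.} When $a=0$ we have $\sg_a = \om_a \equiv 1$, and when $b=0$ we have $\rho_0 \equiv e$, so in each case $A(SU(2),\omega)$ coincides with $A(SU(2))$ up to an equivalent norm. By Proposition~\ref{P:weight-subgroup}, restriction to the maximal torus $\T \subset SU(2)$ yields a complete quotient map onto $A(\T,\omega_\T)$, where $\omega_\T$ is constant as well; hence $A(\T,\omega_\T) \cong A(\T) \cong \ell^1(\Z)$, which is not Arens regular because $\Z$ is infinite. Since Arens regularity is inherited by quotients, $A(SU(2))$ cannot be Arens regular in any of these cases.

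\textbf{Sufficiency via Theorem~\ref{T:weight-Arens reg-positive}.} Combining the Clebsch--Gordan decomposition~\eqref{Eq:Tensor formula} with the block-diagonal formula~\eqref{Eq:bdd co prod-compact} gives
\[
\|\TOm(\pi_l,\pi_r)\| \;=\; \max_{|l-r|\le k\le l+r} \frac{\omega(\pi_k)}{\omega(\pi_l)\omega(\pi_r)}.
\]
Since $\sg_a$, $\om_a$, and $\rho_b$ are each monotone increasing in the dimension $2k+1$, the maximum is attained at $k=l+r$. For $\sg_a$ with $a>0$ the conclusion is immediate from Corollary~\ref{C:weight-Arens regular-Simple Lie group}, as $SU(2)$ is a compact connected simple Lie group. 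For $\om_a$ with $a>0$ the same criterion applies because
\[
\frac{(2l+2r+1)^a}{(2l+1)^a(2r+1)^a}\;\xrightarrow[l\to\infty]{}\;(2r+1)^{-a}\;\xrightarrow[r\to\infty]{}\;0,
\]
and symmetrically in the reverse order. For $\rho_b$ with $0<b<1$, the ratio equals $\exp\bigl((2l+2r+1)^b - (2l+1)^b - (2r+1)^b\bigr)$; the mean value theorem gives $(2l+2r+1)^b-(2l+1)^b$ of order $l^{b-1}\to 0$ as $l\to\infty$, so the exponent tends to $-(2r+1)^b$ and then to $-\infty$ as $r\to\infty$.

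\textbf{The main obstacle} is the boundary case $b=1$. Here a direct calculation gives $\|\TOm(\pi_l,\pi_r)\| \equiv e^{-1}$, so Theorem~\ref{T:weight-Arens reg-positive} does not apply. To handle this case I would exploit the super-exponential decay of the Fourier coefficients of elements of $A(SU(2),\rho_1)$, mirroring the classical realization of $\ell^1(\Z,e^{|n|})$ as an algebra of analytic functions on the annulus $\{e^{-1}<|z|<e\}$. Concretely, one can attempt to show directly that for every $f\in A(SU(2),\rho_1)$ the multiplication operator $L_f\colon g\mapsto fg$ is weakly compact on $A(SU(2),\rho_1)$, which by the standard characterization yields Arens regularity; the rapid decay of Fourier coefficients should allow $L_f$ to be approximated in operator norm by finite-rank operators, the latter being trivially weakly compact.
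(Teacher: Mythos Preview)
Your sufficiency arguments for $\sg_a$, $\om_a$, and $\rho_b$ with $0<b<1$ follow exactly the paper's route via Theorem~\ref{T:weight-Arens reg-positive}; your estimate for $\om_a$ (taking the maximum at $k=l+r$ and computing the iterated limit directly) is in fact cleaner than the paper's, which asserts $\om_a(\pi_k)\le \om_a(\pi_l)+\om_a(\pi_r)$ without restriction on $a$. For necessity you argue via the quotient onto $\ell^1(\Z)$, whereas the paper simply invokes Forrest's result \cite{B1} that Arens regularity of $A(G)$ forces $G$ to be discrete; both are valid.

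You are right that the boundary case $b=1$ is not covered by Theorem~\ref{T:weight-Arens reg-positive}: the computation $\norm{\TOm(\pi_l,\pi_r)}=e^{-1}$ is correct, so the iterated limits are nonzero. The paper itself does not address this; it merely says the proof is ``similar to the preceding case'', which, as you have discovered, does not go through at $b=1$. So you have located a genuine gap in the paper's own argument.

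However, your proposed repair is not correct as stated. The characterization you invoke is not the standard one: Arens regularity of $A$ is equivalent to weak compactness of the maps $A\to A^*$, $a\mapsto a\cdot\phi$ for every $\phi\in A^*$, \emph{not} of the multiplication operators $L_f:A\to A$. In a unital algebra $L_1$ is the identity, which is weakly compact only when $A$ is reflexive; since $A(SU(2),\rho_1)$ is isomorphic as a Banach space to $A(SU(2))$ (an $\ell^1$-type space), it is not reflexive, and your criterion fails immediately. Likewise the finite-rank approximation of $L_f$ cannot work: already for a trigonometric polynomial $f$ the operator $L_f$ has infinite-dimensional range. If you wish to salvage the case $b=1$, you would need either the correct weak-compactness criterion on $A\to A^*$, or an embedding of $A(SU(2),\rho_1)$ into an operator algebra, or some other device; none of this is supplied by the paper either.
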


\begin{proof}
If $a=b=0$, then these Beurling-Foureir algebras are $A(SU(2))$. Thus the result follows from \cite{B1}.
For the converse, suppose that $a>0$. Since $SU(2)$ is a compact, connected, simple Lie group,
it follows from Corollary \ref{C:weight-Arens regular-Simple Lie group} that $A(SU(2), \sg_a)$ is Arens regular.
For the case of $\om_a$, let
	$$W_a = \bigoplus_{\pi\in \G} \om_a(\pi) 1_{B(H_\pi)},$$
and let $\Theta_a$ be the Fourier transform of $\Gamma(W_a) (W_a^{-1} \otimes W_a^{-1})$
(see Definition \ref{D:Weight-Arens reg-operator}).
For every $\pi_l,\pi_r \in \widehat{SU(2)}$, we have
$$\Theta_a(\pi_l, \pi_r)=
\bigoplus_{k=|l-r|}^{|l+r|} \frac{\om_a(\pi_k)}{\om_a{(\pi_l)}\om_a{(\pi_r})}1_{B(H_{\tau_k})},$$
where $\pi_l \otimes \pi_r \cong \bigoplus_{k=|l-r|}^{|l+r|} \pi_k$ is the irreducible decomposition
of $\pi_l \otimes \pi_r$ (see the tensor formula (\ref{Eq:Tensor formula})).
Since $d_{\pi_k} \leq d_{\pi_l} + d_{\pi_r}$, it follows that
	$$\om_a(\pi_k) \leq \om_a{(\pi_l)} + \om_a{(\pi_r}).$$
Thus
	$$\| \Theta_a(\pi_l, \pi_r) \| \leq \frac{1}{(1+ 2l)^a}+ \frac{1}{(1+2r)^a}.$$
Hence it follows that
$$ \lim_{\pi_l \to \infty} \limsup_{\pi_r \to \infty} \norm{\Theta_a(\pi_l, \pi_r)}
= \lim_{\pi_r \to \infty} \limsup_{\pi_l \to \infty} \norm{\Theta_a(\pi_l, \pi_r)}=0.$$
Therefore $A(G,\om_a)$ is Arens regular by Theorem \ref{T:weight-Arens reg-positive}.
The proof of the Arens regularity of $A(SU(2),\rho_b)$ when $b>0$ is similar to the preceding case.
\end{proof}

\subsection{Arens regular subalgebras of Fourier algebras}\label{S:Arens regular-Fourier alg-SU(2)}

It is shown in \cite{Gra} that there are closed ideals in $L^1(\T^n)\cong A(\Z^n)$ ($n\in \N$) that are Arens regular.
Since these ideals are non-unital, by \cite{U1}, they can not have bounded approximate identity.

In this section, we show that we can construct unital, infinite-dimensional Arens regular
closed subalgebras of Fourier algebras on certain products
of $SU(2)$. This goes parallel to the main result of \cite{U1} since these subalgebras are not ideals.
In fact, they are of the form of Beurling-Fourier algebras on $SU(2)$. This is a surprising
and at the same time an interesting result since classical Beurling algebras can never be a closed subalgebra of
a group algebra unless the weight is trivial. However, the relation (\ref{Eq:norm-gamma}) shows that this can happen for
Beurling-Fourier algebras on certain non-abelian groups. More precisely, it is shown in \cite{M} that for a
locally compact group $G$, $\sup \{ d_\pi \mid \pi\in \G \}$ is finite if and only if $G$ is almost abelian i.e. $G$ has an abelian
subgroup of finite index. Thus if $G$ is not almost abelian, then $A_{\gamma^n}(G)$ defined in (\ref{Eq:A-DEl-formula}) is a Beurling-Fourier algebra with growing weight (see also (\ref{Eq:weight-non-abelian-dim}) and (\ref{Eq:norm-gamma})). We will see in the following theorem that this algebras can be Arens regular.

\begin{thm}
Let $n\in \N$, and let $G_n=SU(2)\times \cdots \times SU(2)$, $2^n$-times. Then
$A_{\gamma^n}(SU(2))$ is a unital, infinite-dimensional Arens regular closed subalgebra of the Fourier algebra $A(G_n)$.
\end{thm}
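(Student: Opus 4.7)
\smallskip

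\noindent\textbf{Proof proposal.} The plan is to recognize $A_{\gamma^n}(SU(2))$ as a Beurling--Fourier algebra of $SU(2)$ with a central weight of the form \eqref{Eq:weight-non-abelian-dim}, and then invoke the Arens regularity result already established for such weights in Theorem \ref{T:weight-SU(2)-Arens regular}. The properties of being unital, infinite-dimensional, and a closed subalgebra of $A(G_n)$ are then by-products of facts recorded earlier in Section \ref{S:Functorial Property}.

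More precisely, first I would compare the explicit norm formula \eqref{Eq:norm-gamma}, namely
$$\|f\|_{A_{\gamma^n}(G)} = \sum_{\pi \in \G} d_\pi^{2^n+1} \|\widehat{f}(\pi)\|_1,$$
with the weighted norm description in Remark \ref{rem-central}(1), namely $\|f\|_{A(G,\om)} = \sum_{\pi} d_\pi \om(\pi) \|\widehat{f}(\pi)\|_1$. Taking $\om = \om_{2^n}$ with $\om_{2^n}(\pi) = d_\pi^{2^n}$ (which is central by Example \ref{E:weight-compact}), this gives the isometric identification
$$A_{\gamma^n}(SU(2)) = A(SU(2), \om_{2^n})$$
as subalgebras of $A(SU(2))$ (both carry the pointwise multiplication of continuous functions, by Definition \ref{D:weight-Beurling Fourier alg} and the discussion following it for the right-hand side, and by the complete isometry $\check{N}$ intertwining pointwise products for the left-hand side).

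Next I would apply Theorem \ref{T:weight-SU(2)-Arens regular}(i) (equivalently Corollary \ref{C:weight-Arens regular-Simple Lie group}) with $a = 2^n > 0$ to conclude that $A(SU(2), \om_{2^n})$ is Arens regular. The fact that $A_{\gamma^n}(SU(2))$ is a closed unital subalgebra of $A(G_n) = A(SU(2)^{2^n})$ is already recorded right after equation \eqref{Eq:A-DEl-formula}. Unitality is also clear from the $\om_{2^n}$-side: since $SU(2)$ is compact, $\mathbf{1} \in A(SU(2))$ with $\widehat{\mathbf{1}}$ supported only at the trivial representation, hence $\mathbf{1} \in A(SU(2),\om_{2^n})$ with norm one. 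Infinite-dimensionality is immediate because $\fT(SU(2)) \subseteq A(SU(2), \om_{2^n})$ (finite Fourier support makes the weighted norm finite), and $\fT(SU(2))$ is infinite-dimensional.

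There is essentially no obstacle here: all the work has been done in the compact-group section. The only minor point to verify carefully is the identification $A_{\gamma^n}(SU(2)) = A(SU(2),\om_{2^n})$, which requires matching not only the norms via \eqref{Eq:norm-gamma} but also confirming that the algebra structures coincide (both being pointwise multiplication on $SU(2)$ after using $\check{N}$ to view $A_{\gamma^n}(SU(2))$ as functions on $SU(2)$). Once this identification is in place, Theorem \ref{T:weight-SU(2)-Arens regular}(i) closes the argument in one line.
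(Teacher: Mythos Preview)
Your proposal is correct and follows essentially the same approach as the paper: identify $A_{\gamma^n}(SU(2))$ with $A(SU(2),\om_{2^n})$ via the norm formula \eqref{Eq:norm-gamma}, then apply Theorem \ref{T:weight-SU(2)-Arens regular}(i) for Arens regularity, with the closed-unital-subalgebra property coming from the discussion after \eqref{Eq:A-DEl-formula}. One small correction: your parenthetical ``equivalently Corollary \ref{C:weight-Arens regular-Simple Lie group}'' is not quite right, since that corollary covers only the weight $\sg_a$, not $\om_a$; the reference to Theorem \ref{T:weight-SU(2)-Arens regular}(i) is the correct one here.
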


\begin{proof}
Consider the central weight
$$\om_{2^n}(\pi_l)=d_{\pi_l}^{2^n} \ \ \ (\pi_l \in \widehat{SU(2)}).$$
Then, by (\ref{Eq:weight-non-abelian-dim}), (\ref{Eq:A-DEl-formula}), and (\ref{Eq:norm-gamma}),
 $A_{\gamma^n}(SU(2))=A(SU(2),\om_{2^n})$ is a unital, infinite-dimensional closed subalgebra of
$A(G_n)$ and by Theorem \ref{T:weight-SU(2)-Arens regular}, it is Arens regular.
\end{proof}

\end{document}